\newcommand{\eq}{\mathbin{<\!>}}
\theoremstyle{thmstyleone}%
\newtheorem{theorem}{Theorem}
\newtheorem{proposition}[theorem]{Proposition}%
\newtheorem{corollary}[theorem]{Corollary}%
\newtheorem{lemma}[theorem]{Lemma}%
\theoremstyle{thmstyletwo}%
\theoremstyle{thmstylethree}%
\newtheorem{definition}[theorem]{Definition}%
\newtheorem{remark}[theorem]{Remark}
\newtheorem{example}[theorem]{Example}
\begin{document}

\title[Infinite-dimensional Convex Cones]{Infinite-dimensional Convex Cones: Internal Geometric Structure and Analytical Representation}


\author*{\fnm{Valentin V.} \sur{Gorokhovik}}\email{gorokh@im.bas-net.by}

\affil*{\orgdiv{Institute of Mathematics}, \orgname{The National Academy of Sciences of Belarus}, \orgaddress{\street{Surganov, 11}, \city{Minsk}, \postcode{220012}, \country{Belarus}}}


\abstract{In the paper we consider convex cones in infinite-dimensional real vector spaces which are endowed with no topology. The main purpose is to study an internal geometric structure of convex cones and to obtain an analytical description of those. To this end, we first introduce the notion of an open component of a convex cone and then prove that an arbitrary convex cone is the disjoint union of the partial ordered family of its open components and, moreover, as an ordered set this family is an upper semilattice. We identify the structure of this upper semilattice with the internal geometric structure of a convex cone. We demonstrate that the internal geometric structure of a convex cone is related to its facial structure but in the infinite-dimensional setting these two structures may differ each other. Further, we study the internal geometric structure of conical halfspaces (convex cones whose complements are also convex cones).  We show that every conical halfspace is the disjoint union of the linear ordered family of its open components each of which is a conical halfspace in its linear hull. Using the internal geometric structure of conical halfspaces, each asymmetric conical halfspace is associated with a linearly ordered family of linear functions, which generates in turn a real-valued function, called a step-linear one, analytically describing this conical halfspace. At last, we establish that an arbitrary asymmetric convex cone admits an analytical representation by the family of step-linear functions.
}

\keywords{Convex cone, Conical halfspace, Relative algebraic interior, Step-linear function, Analytical representation}


\pacs[MSC Classification]{52A05,  06F20}

\maketitle

\section{Introduction}\label{sec1}

Cones, especially convex cones, play an important role in variational analysis and optimization. It is enough to recall tangent and normal cones to sets \cite{RW,Mord}, the recession and barrier cones to convex sets \cite{KhTZ,Roc}, cones of positive vectors in ordered vector spaces \cite{Pere,Jameson}, vector optimization problems \cite{Gor-m,Jahn}, linear and second-order conic optimization problems \cite{BonSha} and much more.
Besides the publications listed above convex cones have been intensively studied in a vast literature that cover both finite-dimensional and infinite-dimensional settings, see, e.g., \cite{Alip_Tour,Barker,Khazayel,NZ} and the references therein.
The paper \cite{Khazayel} is devoted to algebraic properties of convex sets and convex cones in the infinite-dimensional setting. Along with new results the paper \cite{Khazayel} contains a rather detailed overview of recent papers devoted to the same topic. The authors of the papers \cite{NZ} have studied the interrelation between the relatively algebraic interior (also called the intrinsic core) of a convex cone and its positive dual cone.

In the present paper we consider convex cones in infinite-dimensional real vector spaces which are endowed with no topology. The main purpose is to study the geometric structure of convex cones and to obtain an analytical description of those. The geometric structure of convex cones as well as arbitrary convex sets is most often related to their facial structure. The facial structure of closed pointed convex cones  in the finite-dimensional vector spaces has been exhaustively studied in the paper \cite{Barker}. It was shown in \cite{Barker} that in the finite-dimensional setting the family of all faces of a closed convex cone, partially ordered by inclusion, is a complete lattice. The main results of the paper \cite{Barker} characterize algebraic properties of the lattice of faces. In the recent paper \cite{Millan} the authors clarify the relation between intrinsic core and facial structure of convex sets in general vector spaces, and, in particular, the relation between points of intrinsic cores and minimal faces containing of such points.

In Section 2 of the paper we propose another approach to the geometric structure of convex cones. As the basic elements of the geometric structure we consider not faces but so called `open components' of a convex cone. In order to define the concept of an open component of a convex cone, we introduce on this cone the preorder relation, called the dominance relation, which generates in turn an equivalence relation on the cone in question. Open components of a convex cone are equivalence classes of the factor of this cone by the equivalence relation generated by the dominance relation. It is established that each class equivalence is an relatively algebraic open convex subcone of the cone in question. This justifies the term `open component'. The family of all open components of a convex set is ordered by the partial order which is defined as a factor of the dominance relation. Thus, we see that an arbitrary convex cone is the disjoint union of the partial ordered family of its open components and, moreover, we prove that as an ordered set this family is an upper semilattice. It allows us to consider open components as disjoint `building blocks' of a convex cone, and, moreover, in the cone these `building blocks' arrange relatively each other not irregularly but in accordance with the structure of the upper semilattice corresponding to the cone in question. We refer to such description of the geometric structure of a convex cone as its internal geometric structure.

The relations between open components and faces of a convex cone are discussed in Section \ref{sec3}. It is shown that each open component is the intrinsic core of the minimal face containing this open component. Moreover, the family of open components one-to-one corresponds to the subfamily of those faces whose intrinsic cores are nonempty.  Thus, the claim that every convex cone is the disjoint union of its open components can be reformulated as follows: each convex cone is the disjoint union of the nonempty intrinsic cores of its faces. In such form
this fact was pointed out earlier in the papers \cite{Millan,Garcia}. We demonstrate also (see Theorem \ref{th3.13b}, Corollary \ref{cor3.13a} and Example \ref{ex3.17}) that in each infinite-dimensional space there are such convex cones faces of which may have empty intrinsic cores. Thus, in the infinite-dimensional setting the internal geometric structure of a convex cone may differ from its facial structure.

Further, in Section \ref{sec4} we study the internal geometric structure of conical halfspaces (convex cones whose complements are also convex cones).  We show that every conical halfspace is the disjoint union of the linear ordered family of its open components each of which is a conical halfspace in its linear hull.  Section \ref{sec5} is devoted to an analytical representation of conical halfspaces. Using the internal geometric structure of conical halfspaces, each asymmetric conical halfspace is associated with a linearly ordered family of linear functions, which generates in turn a real-valued function, called a step-linear one, analytically describing this conical halfspace. At last, in Section \ref{sec6} we establish that an arbitrary asymmetric convex cone admits an analytical representation by the family of step-linear functions.

We conclude this introductory section by recalling some preliminary notions and results related to convex cones.

Throughout this paper $X$ is a nontrivial ($X \ne \{0\}$) real vector space. 

A subset $K \subseteq X$ is called a \textit{cone} if $\lambda x \in K$ for all $x \in K$ and all $\lambda >0$.

In other words a subset $K \subseteq X$ is a cone if along with each point $x \in K$ the subset $K$ contains the entire ray emanating from the origin and passing through the point $x$.

Note, that the origin not necessarily belongs to a cone $K$.

Recall that set $Q \subseteq X$ is said to be {\it convex} if for any $x,y \in Q$ the segment $[x,y]:=\{\alpha x + (1-\alpha)y \mid \alpha \in [0,1]\}$ is entirely contained in $Q$.

A cone $K \subseteq X$ is convex if and only if $x + y \in K$ whenever $x,y \in K$ or, equivalently, if and only if $K + K \subseteq K$.

The linear hull of a convex cone $K$ denoted by ${\rm Lin}K$ is equal to $K-K$.

A cone $K \subseteq X$ is said to be asymmetric if $K \bigcap (-K) = \varnothing$. A convex cone $K \subset X$ is asymmetric if and only if $0 \not\in K$.

Let $K \subseteq X$ be a convex cone. The set
$$
L_K:=\{h \in X \mid x +th \in K \,\,\text{for all}\,\, x \in K\,\,\text{and all}\,\, t \in {\mathbb R}\}
$$
is the greatest vector subspace in $X$ such that $L_K + K=K$.

We refer to $L_K$ as the \textit{vector subspace associated with the convex cone $K$}.

Note also, that $0 \in L_K$ for any convex cone $K$ and hence $L_K$  is nonempty for any convex cone $K$, with $K \bigcup L_K$ being a convex cone as well. Moreover, when $K \bigcap (-K) =\varnothing$, i.e. when  a convex cone $K$ is asymmetric, $L_K$ is the greatest vector subspace among all vector subspaces $T \subset X$ such that $K \bigcup T$ is a convex cone. A convex cone $K \subset X$ is asymmetric if and only if  $K \bigcap L_K = \varnothing$. It is easily seen that $L_K = K \bigcap (-K)$ provided $K \bigcap (-K) \ne \varnothing$.

Clearly $L_K = L_{-K}$ for any convex cone $K \subseteq X$. Furthermore, for any convex cone $K \subseteq X$ the inclusion $L_{K} \subset {\rm Lin}(K)$, where ${\rm Lin}(K)$ is the linear hull of $K$, is true. Indeed, for every $x \in K$ we have $x+L_K \subset K$ and, consequently, $L_K = (x + L_K) - x \subset K-K = {\rm Lin}(K)$.

For any convex cone $K \subset X$ the inclusion $K\bigcap (-K) \subseteq L_{\widehat{K}}$, where  $\widehat{K} := K \setminus (-K)$ is the asymmetric part of $K$, holds. The next example shows that this inclusion can be proper.
\begin{example}
{\rm Let $K := \{(x_1,x_2) \in {\mathbb{R}}^2 \mid x_2 > 0 \} \bigcup \{(0,0)\}$. Then $\widehat{K} = K \setminus \{(0,0)\}$ and $L_{\widehat{K}} = \{(x_1,x_2) \in {\mathbb{R}}^2 \mid x_2 = 0 )\}$ while $L_K = \{(0,0)\}$.}
\end{example}

An important class of convex cones is formed by conical half-spaces.

 A subset $H$ of a real vector space $X$ is called \cite{Las,GS98,GS2000} a \textit{halfspace in $X$} if both $H$ and its complement $X \setminus H$ are convex. A halfspace $H$ in $X$ which is a cone is called a \textit{conical halfspace in $X$}. For any halfspace $H$ in $X$ its complement $X  \setminus H$ is a halfspace in $X$ too, and, if $H$ is a conical halfspace in $X$, then $X \setminus H$ is also a conical halfspace in $X$, and either $H$ or $X\setminus H$ is asymmetric.

\begin{lemma}{\rm \cite{GS98,GS2000}}\label{lem1.1}
An asymmetric convex cone $H \subset X$ is a conical halfspace in $X$ if and only if the set $X \setminus ((-H)\bigcup H)$ is a vector subspace in $X$. Moreover, for an asymmetric conical halfspace $H$ in $X$ the equality $L_H = X \setminus ((-H)\bigcup H)$ holds and hence $X = (-H) \bigcup L_H \bigcup H$ with $(-H) \bigcap H = \varnothing$ and $H \bigcap L_H = \varnothing$. If a convex cone $H \subset X$ is not asymmetric, i.e., if $0 \in H$, then $H$ is a conical halfspace in $X$ if and only if $X = (-H) \bigcup H$.
\end{lemma}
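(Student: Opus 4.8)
The plan is to prove Lemma \ref{lem1.1} by separately handling the asymmetric and non-asymmetric cases, and within the asymmetric case by proving the equivalence in both directions. For the forward direction of the asymmetric case, I would assume $H$ is an asymmetric conical halfspace, so that $X\setminus H$ is a convex cone as well. Set $S := X \setminus ((-H)\bigcup H)$. I would first show that $S$ is a vector subspace. The key observation is that the complement $X\setminus H$, being a convex cone containing the origin (since $0\notin H$ by asymmetry), decomposes as $X\setminus H = (-H)\bigcup S$, where $(-H)$ and $S$ need to be shown disjoint and together exhaustive. To see $S$ is closed under addition and under negation, I would exploit that $S$ is symmetric by construction ($S = -S$ because $H$ and $-H$ swap under negation) and use the convex-cone property of $X\setminus H$ together with the convex-cone property of $H$ to rule out sums landing in $H$. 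The delicate point is showing that $x,y\in S$ forces $x+y\in S$: if instead $x+y\in H$, I would derive a contradiction by writing a suitable convex combination and using that the midpoint of $x+y$ and some element must stay on one side.

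For the reverse direction of the asymmetric case, I would assume $S := X\setminus((-H)\bigcup H)$ is a vector subspace and show $X\setminus H$ is convex. Since $H$ is asymmetric we have $(-H)\bigcap H = \varnothing$, and $H\bigcap S = \varnothing$, $(-H)\bigcap S=\varnothing$ by definition, so $X = (-H)\bigcup S\bigcup H$ is a disjoint union. Hence $X\setminus H = (-H)\bigcup S$. To prove this set convex, I would take $u,v\in (-H)\bigcup S$ and check all three cases ($u,v\in -H$; $u,v\in S$; one in each) that the segment $[u,v]$ avoids $H$. The cases where both lie in $-H$ or both in $S$ are immediate from convexity of $-H$ and the subspace property of $S$. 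The mixed case $u\in -H$, $v\in S$ is the one requiring care: I would argue that any convex combination $\alpha u + (1-\alpha)v$ cannot lie in $H$, since otherwise translating by the subspace element $v\in S$ (using that $S$ is the associated subspace candidate) would push a point of $-H$ into $H$, contradicting asymmetry.

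Once both directions are established, the displayed consequences follow quickly: the equality $L_H = S$ comes from identifying $S$ with the largest subspace $T$ with $H\bigcup T$ a convex cone, using the characterization of $L_H$ recalled in the preamble, and then $X = (-H)\bigcup L_H\bigcup H$ with the stated disjointness is just the decomposition already obtained. For the non-asymmetric case $0\in H$, the argument is shorter: here $H$ and $-H$ overlap, and I would show directly that $X\setminus H$ being a convex cone is equivalent to $X = (-H)\bigcup H$, since the complement of $H$ must coincide with $-H$ modulo the shared subspace $L_H = H\bigcap(-H)$.

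The main obstacle I anticipate is the mixed case in the convexity arguments, namely showing that a convex combination of a point in $-H$ and a point in the subspace $S$ stays out of $H$. This is where the interplay between the convex-cone structure and the subspace structure is genuinely used, and it is precisely the step where asymmetry ($0\notin H$, equivalently $H\bigcap L_H=\varnothing$) is essential; without it the decomposition $X = (-H)\bigcup S\bigcup H$ fails to be disjoint and the clean three-way splitting breaks down, which is exactly why the non-asymmetric case needs the separate characterization $X = (-H)\bigcup H$.
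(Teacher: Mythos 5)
The paper offers no proof of Lemma \ref{lem1.1} --- it is quoted from \cite{GS98,GS2000} --- so there is no in-paper argument to compare against; I can only judge your plan on its own terms. The overall strategy (symmetric/exhaustive decomposition $X=(-H)\cup S\cup H$ with $S:=X\setminus((-H)\cup H)$, case analysis on segments, separate treatment of $0\in H$) is the right one and every step can be completed, but two points need attention. First, the step you single out as delicate in the forward direction --- that $x,y\in S$ forces $x+y\in S$ --- is in fact immediate and needs no convex-combination or midpoint trick: $S\subseteq X\setminus H$ and $X\setminus H$ is a convex cone, so $x+y\notin H$; applying the same to $-x,-y\in S=-S$ gives $x+y\notin -H$.

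Second, and more substantively, your mixed case in the reverse direction ($u\in -H$, $v\in S$, show $\alpha u+(1-\alpha)v\notin H$) silently invokes $H+S\subseteq H$ (``translating by the subspace element $v$ \ldots would push a point of $-H$ into $H$''), i.e.\ $S\subseteq L_H$, which has not been established at that point and is exactly the content you are trying to prove when you later claim $L_H=S$. It does follow from the hypothesis that $S$ is a subspace: if $x\in H$, $v\in S$ and $x+v\in S$, then $x=(x+v)-v\in S$, contradicting $S\cap H=\varnothing$; and if $x+v\in -H$, then $-v=x+(-(x+v))\in H+H\subseteq H$, contradicting $-v\in S$. You must insert this as an explicit intermediate lemma (after which your contradiction $\alpha u\in H\cap(-H)$ works, and $L_H=S$ follows by combining $S\subseteq L_H$ with $L_H\cap H=L_H\cap(-H)=\varnothing$ from the preamble). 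Alternatively, the mixed case can bypass it entirely: from $w:=\alpha u+(1-\alpha)v\in H$ and $-\alpha u\in H$ one gets $(1-\alpha)v=w+(-\alpha u)\in H$, contradicting $S\cap H=\varnothing$ directly. Your non-asymmetric case is also too thin as stated; the two facts to supply are that $y\notin H\cup(-H)$ would give $0=y+(-y)\in X\setminus H$ (a convex cone), contradicting $0\in H$, and conversely that $u,v\in X\setminus H\subseteq -H$ with $\alpha u+(1-\alpha)v\in H$ would force $\alpha u\in H\cap(-H)\subseteq H$, contradicting $u\notin H$.
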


It follows from the statements of Lemma \ref{lem1.1} that ${\rm Lin}(H) = X$ for any conical halfspace $H \subset X$.

A widespread class of pairs of complementary halfspaces in $X$ is given by the subsets
$$
H_>(l, \alpha):=\{x \in X \mid l(x) > \alpha\}\,\,\text{and}\,\,H_\le(l, \alpha):=\{x \in X \mid l(x) \le \alpha\},
$$
where $l:X \to {\mathbb{R}}$ is non-zero linear function on $X$ and $\alpha \in {\mathbb{R}}$ is an arbitrary real.

Semispaces and their complements give other examples of complementary halfspaces in $X$. Recall \cite{Hammer,Klee} that a subset $S \subset X$ is called a \textit{semispace generated by a point $a \in X$} (or, simply, a \textit{semispace at a point $a \in X$}) if it is a maximal (by inclusion) convex set in $X \setminus \{a\}$. Semispaces at the origin are asymmetric conical halfspaces in $X$.

Semispaces were introduced and studied by Hammer \cite{Hammer} and Klee \cite{Klee}.  Halfspaces (also called hemispaces) in finite-dimensional spaces were studied by Lassak \cite{Las} and Martinez-Legaz and Singer \cite{ML_S88} and in infinite-dimensional spaces by Lassak and Pruszynski \cite{LasPro}, Gorokhovik and Semenkova \cite{GS98}, and Gorokhovik and Shinkevich \cite{GS2000}. It is worth noting that in the English translation of the paper \cite{GS98}, halfspaces are mistakenly translated as semispaces.

In conclusion of this section we note that in what follows some statements will be proved only for asymmetric convex cones. Using the representation of an arbitrary convex cone $K$ as $K = [K\bigcap (-K)]\bigsqcup \widehat{K}$ ($\bigsqcup$ denotes a disjoint union), where $\widehat{K}:= K \setminus (-K)$ is the asymmetric part of the cone $K$, most of the statements proved for asymmetric convex cones can be extended to arbitrary convex cones.

\section{Internal geometric structure of convex cones}


To characterize the internal geometric structure of convex cones we will need the notions of the algebraic interior and the relative algebraic interior of convex sets (see \cite{Millan,Khazayel,Klee51,Raikov,Rubinstein1,Rubinstein,Holmes,Aliprantis,KhTZ,Zalinescu}).

The \textit{algebraic interior} (or the \textit{core}) of a subset $Q \subset X$, denoted by ${\rm cr}Q$,  is the subset of $Q$ consisting of all points $x \in Q$ such that the intersection of $Q$ with each straight line of $X$ passing through $x$ contains an open interval around $x$.

The \textit{relative algebraic interior} (or the \textit{intrinsic core}) of a subset $Q \subset X$, denoted by ${\rm icr}Q$, is the subset of $Q$ consisting of all points $x \in Q$ such that the intersection of $Q$ with each straight line laying in ${\rm aff}Q$ and passing through $x$ contains an open interval around $x$.

More precisely, $x \in {\rm cr}Q$ if and only if for any $y \in X \setminus \{x\}$  there exists a positive real number $\delta > 0$ such that $x+t(y-x) \in Q$ for all $t \in (-\delta,\delta)$, while $x \in {\rm icr}Q$ if and only if for any $y \in {\rm aff}Q \setminus \{x\}$  there exists a positive real number $\delta > 0$ such that $x+t(y-x) \in Q$ for all $t \in (-\delta,\delta)$.

If a set $Q \subset X$ is convex then $x \in {\rm icr}Q$ if and only if for any $y \in Q \setminus \{x\}$  there exists a positive real number $\delta > 0$ such that $x+t(y-x) \in Q$ for all $t \in (-\delta,\delta)$.

 A set $Q \subset X$ is called \textit{(relatively) algebraic open} if it coincides with its (relative) algebraic interior, i.e., if $Q = {\rm cr}Q$ ($Q = {\rm icr}Q$).

\begin{proposition}\label{pr2.1}
Let $K$ be a convex cone from $X$. Then
\begin{equation}\label{e2.1}
x \in {\rm icr}K \Longleftrightarrow x \in K\,\,\text{and}\,\,(\forall y \in K) (\exists \lambda >0)\,\,\text{such that}\,\, x - \lambda y \in K.
\end{equation}
\end{proposition}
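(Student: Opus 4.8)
The plan is to reduce everything to the line-segment description of the intrinsic core of a convex set recalled immediately before the proposition: since $K$ is convex, $x \in \mathrm{icr}\,K$ if and only if $x \in K$ and for every $y \in K \setminus \{x\}$ there is $\delta > 0$ with $x + t(y - x) \in K$ for all $t \in (-\delta, \delta)$. The whole argument then rests on the single algebraic identity
$$
x + t(y - x) = (1 - t)x + t y \qquad\text{and}\qquad x - s(y - x) = (1+s)x - s y,
$$
combined with the fact that $K$ is a cone, so that membership is unchanged under multiplication by any positive scalar; in particular $(1+s)x - sy \in K$ if and only if $x - \tfrac{s}{1+s}\,y \in K$.

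For the forward implication I would assume $x \in \mathrm{icr}\,K$ and fix $y \in K$. If $y = x$ then $x - \tfrac12 x = \tfrac12 x \in K$ because $K$ is a cone, so $\lambda = \tfrac12$ works. If $y \neq x$, the characterization yields $\delta > 0$ with $x + t(y-x) \in K$ for $t \in (-\delta, \delta)$; choosing $t = -s$ with $0 < s < \delta$ gives $(1+s)x - sy \in K$, and dividing by $1 + s > 0$ produces $x - \lambda y \in K$ with $\lambda = \tfrac{s}{1+s} > 0$.

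For the converse I would assume $x \in K$ together with the stated cone condition and fix $y \in K \setminus \{x\}$. On the positive side, $x + t(y-x) = (1-t)x + t y \in K$ for $t \in [0,1)$ directly from $K + K \subseteq K$ and positive homogeneity. On the negative side, the hypothesis supplies $\lambda > 0$ with $x - \lambda y \in K$; convexity then gives $x - \beta y \in K$ for every $\beta \in [0, \lambda]$, by forming the convex combinations $\alpha(x-\lambda y) + (1-\alpha)x = x - \alpha\lambda y$. Since $\tfrac{s}{1+s} \to 0$ as $s \to 0^{+}$, for all sufficiently small $s > 0$ we have $\tfrac{s}{1+s} \le \lambda$, hence $x - \tfrac{s}{1+s}\,y \in K$ and, scaling by $1+s$, $x + t(y-x) \in K$ for $t = -s$ small. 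Taking $\delta$ small enough to serve both sides shows $x \in \mathrm{icr}\,K$.

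The only delicate point, and the step I would verify most carefully, is the repeated passage between the ``interval along a line'' condition and the ``cone'' condition $x - \lambda y \in K$ via scaling by $1 + s$: one must track which scalars are positive so that cone membership is preserved and the divisions are legitimate, and confirm that the comparison $\tfrac{s}{1+s} \le \lambda$ genuinely holds for small $s$. The separate handling of the degenerate case $y = x$ in the forward direction is a minor but necessary bookkeeping detail, since the convex-set characterization is stated only for $y \neq x$.
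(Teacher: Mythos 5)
Your proof is correct and follows essentially the same route as the paper's: both directions hinge on the identity $x+t(y-x)=(1-t)x+ty$ for negative $t$, rescaling by the positive factor $1+s$ to pass between the line-segment condition and the condition $x-\lambda y\in K$, and using convexity to shrink $\lambda$ when needed. Your explicit treatment of the degenerate case $y=x$ is a small point the paper glosses over, but it changes nothing of substance.
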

\begin{proof}
Suppose that ${\rm icr}K \ne \varnothing$. Since $K$ is convex, $x \in {\rm icr}K$ if and only if for any $y \in K$ there exists a real $\delta > 0$ such that $x + t (y-x) = (1-t)x +ty \in K$ for all $t \in (-\delta, \delta)$. Considering that $K$ is a cone and that for any $t \in (-\delta,0)$ the real $1-t = 1+|t|$ is positive we obtain $x - \displaystyle\frac{|t|}{1+|t|}y \in K$ for all $t \in (-\delta,0)$. Since $\displaystyle\frac{|t|}{1+|t|} > 0$ for any $t \in (-\delta,0)$ and the choice of $y \in K$ was arbitrary, the implication ($\Longrightarrow$) is proved.

To prove the implication ($\Longleftarrow$) we suppose that a vector $x \in K$ is such that for any $y \in K$ there exists $\lambda >0$  such that $x - \lambda y \in K$.
Since $(\lambda - t)y \in K$ for all $t \in (0,\lambda)$, by convexity of the cone $K$ we obtain $(x-\lambda y) +(\lambda -t)y = x -ty \in K$ for all $t \in (0,\lambda)$. Besides, $tx \in K$ for all $t > 0$. Using convexity $K$ again we get $x - t(y-x) \in K$ for all $t \in (0,\lambda)$.
At last, since both $x$ and $y$ belong to the convex cone $K$ we have $x + t(y-x) \in K$ for all $t \in [0,1]$. Hence, $x + t(y-x) \in K$ for all $t \in (-\delta,\delta)$, where $\delta = \min\{\lambda,1\}$. This proves that $x \in {\rm icr}K$.
\end{proof}

It is well known that when $X$ is finite-dimensional, the intrinsic core of every convex subset $Q \subset X$ is nonempty. The next example shows that in each infinite-dimensional real vector space there exist convex sets (moreover, convex cones) whose intrinsic cores are empty.

\begin{example}{\rm \cite[Ch.2, $\S$ 7]{Raikov}}\label{ex2.1}
{\rm Let $X$ be a real vector space and let $\{e_i, i \in I\}$ be a Hamel basis for $X$. Let us define the convex cone $K$ in $X$ which consists of non-zero vectors $x \in X$ whose components $\{x_i,i \in I\}$ in the given basis $\{e_i, i \in I \}$ is nonnegative. Since the basis $\{e_i, i \in I\}$ is infinite, each vector $x \in K$ has zero components. Suppose that $x_j =0$. Since $e_j \in K$ and for any real $\lambda > 0$ the vector $x - \lambda e_j$ does not belong to $K$, then $x \notin {\rm icr}K$ and, hence, through the equivalence \eqref{e2.1}, ${\rm icr}K = \varnothing$ as $x$ is an arbitrary vector from $K$.}
\end{example}

\subsection{The dominance relation on a convex cone}

The equivalence \eqref{e2.1} can be interpreted as follows: a point $x \in K$ belongs to ${\rm icr}K$ if and only if it dominates in a certain sense all other points of the cone $K$. Restricting the right part of \eqref{e2.1} to pairs of points $\{x,y\}$ from $K$ we get the following binary relation defined on $K$.

\begin{definition}
    {\rm We say that a point $x \in K$ \textit{dominates} a point $y \in K$, and denote this by $y \unlhd_K x$, if there is a positive real $\lambda > 0$ such that $x - \lambda y \in K$.}
\end{definition}

\smallskip

It is not difficult to verify that $\unlhd_K$ is a preorder relation (i.e., a reflexive and transitive binary relation) on $K$. By the symbol $\eq_K$ we denote the symmetric part of $\unlhd_K$ defined by $y_1 \eq_K y_2 \Leftrightarrow y_1 \unlhd_K y_2, y_2 \unlhd_K y_1$. It is easy to see that $y_1 \eq_K y_2$ if and only if there exist reals $\mu > 0$ and $\nu > 0$ such that $y_1 - \mu y_2 \in K$ and $ y_2 - \nu y_1 \in K$. The asymmetric part of $\unlhd_K$ denoted by $\lhd_K$ is defined by $y_1 \lhd_K y_2 \Leftrightarrow y_1 \unlhd_K y_2, y_2 \hspace{5pt}/\hspace{-10pt}\unlhd_K y_1$. The relation $\lhd_K$ is a strict partial order, while  $\eq_K$ is an equivalence relation.

\smallskip

\begin{remark}
{\rm The dominance relation $\unlhd_K$ on a convex cone $K$ is related to the relation introduced by Rubinstein in \cite[p.~187]{Rubinstein} for studying the facial structure of convex sets (see also \cite{Klee}).}
\end{remark}

\smallskip

Using the dominance relation $\unlhd_K$ we can reformulate the assertion of Proposition~\ref{pr2.1} as follows: for any convex cone $K \subset X$ the following equivalence holds:
\begin{equation}\label{e2.22}
x \in {\rm icr}K \Longleftrightarrow x \in K\,\,\text{and}\,\,y \unlhd_K x\,\,\text{for all}\,\,y \in K,
\end{equation}
with $y \lhd_K x$ for all $y \in K \setminus {\rm icr}K$ and all $ x \in {\rm icr}K$.

In addition, if a convex cone $K \subset X$ is not asymmetric, then $L_K = K\bigcap (-K)$ and it directly follows from the definition of $\unlhd_K$ and $L_K$ that
\begin{equation}\label{e2.23}
x \in L_K \Longleftrightarrow x \in K\,\,\text{and}\,\,x \unlhd_K y\,\,\text{for all}\,\,y \in K,
\end{equation}
with $x \lhd_K y$ for all $x \in L_K$ and all $y \in K\setminus L_K$.

It immediately follows from the properties of a convex cone $K$ and the definition of the dominance relation $\unlhd_K$ that for any $x,y \in K$ the implications
\begin{equation}\label{e3.4}
x - y \in K \Longrightarrow y \unlhd_K x\,\,\,\,\text{and}\,\,\,\,x -y \in L_K \Longrightarrow x \eq_K y
\end{equation}
hold.

Moreover, for any $y,y_1,y_2,x \in K$ and any positive reals $\alpha > 0, \beta > 0$ the following implications are also true:
\begin{equation}\label{e3.5}
y \unlhd_K x \Longrightarrow \alpha y \unlhd_K \beta x;\,\,\,\,\text{}\,\,\,\,y_1 \unlhd_K x, y_2 \unlhd_K x \Longrightarrow y_1+y_2 \unlhd_K x;.
\end{equation}
\begin{equation}\label{e3.5a}
x \unlhd_K y_1, x \unlhd_K y_2 \Longrightarrow x \unlhd_K y_1+y_2.
\end{equation}

For any $x \in K$ we define the sets
$$
F_K(x) :=\{y \in K \mid y \unlhd_K x\}\,\,\,\,\text{and}\,\,\,\,\widehat{F}_K(x) :=\{y \in K \mid y \lhd_K x\}.
$$
Since $x \in F_K(x)$ the set $F_K(x) \ne \varnothing$ for any $x \in K$. Moreover, it follows from \eqref{e3.5} that for every $x \in K$ the set $F_K(x)$ is a convex subcone of the cone $K$. As for the set $\widehat{F}_K(x)$ it can be empty for some $x \in K$. In the case when
$\widehat{F}_K(x) \ne \varnothing$ it is also a cone which is, in general, not necessarily convex.

\begin{example}\label{ex3.5}
{\rm Let $K = \{(x_1, x_2) \mid x_1 \ge 0, x_2 \ge 0\} \setminus \{(0,0)\}.$
Then $\widehat{F}_K(x)$ is empty for any $x =(x_1, x_2) \in K$ such that $x_1 > 0, x_2 =0$ or $x_1 = 0, x_2 >0$. For $x =(x_1, x_2) \in K$ with $x_1 > 0, x_2 >0$ the set $\widehat{F}_K(x)= \{(x_1, x_2) \mid x_1 > 0, x_2 =0\}\cup \{(x_1, x_2) \mid x_1 = 0, x_2 >0\}$ is a cone which is not convex.}
\end{example}

It follows from transitivity of the relation $\unlhd_K$ that $F_K(y) \subseteq F_K(x)$ whenever $y \unlhd_K x$. Evidently, the converse is also true. Thus, for any $x,y \in K$ the following equivalences are true:
\begin{equation}\label{e3.6a}
y \unlhd x \Leftrightarrow F_K(y) \subseteq F_K(x); y \lhd_K x \Leftrightarrow F_K(y) \subsetneq F_K(x); y \eq_K x \Leftrightarrow F_K(y) = F_K(x).
\end{equation}

The set $G_K(x):= \{y \in K \mid x \unlhd_K y\}$ is also a nonempty convex cone for every $x \in K$ (this follows from the implications \eqref{e3.5} and \eqref{e3.5a}) and, like the set $F_K(x)$, it can be used for characterization of $\unlhd_K$ on $K$, since for any $x,y \in K$ the following equivalences
\begin{equation*}
y \unlhd_K x \Leftrightarrow G_K(y) \supseteq G_K(x); y \lhd_K x \Leftrightarrow G_K(y) \supsetneq G_K(x); y \eq_K x \Leftrightarrow G_K(y) = G_K(x)
\end{equation*}
hold.

\subsection{Open components of a convex cone}

For a convex cone $K \subset X$ by $\mathcal{O}(K):= K/\eq_K$ we denote the set of equivalence classes of $K$ corresponding to the equivalence relation $\eq_K$.

Clearly, that the cone $K$ is a disjoint union of all equivalence classes from $\mathcal{O}(K)$, i.e. $K = \bigcup\{E\mid E\in \mathcal{O}(K)\}$ and $E_1\bigcap E_2 = \varnothing$ for any $E_1,E_2 \in \mathcal{O}(K),E_1 \ne E_2$.
In addition, $\mathcal{O}(K)$ is assumed to be equipped with the partial order $\unlhd_K^*$ which is defined as follows: $E_1 \unlhd_K^* E_2$ holds for $E_1,E_2 \in \mathcal{O}(K)$ if and only if $x_1 \unlhd_K x_2$ for all (some) $x_1 \in E_1$ and all (some) $x_2 \in E_2$.

The equivalence class containing the vector $x \in K$ will be denoted by $E_x$. 

\begin{proposition}\label{pr3.6}
Each equivalence class $E$ from $\mathcal{O}(K)$ is a relatively algebraic open convex cone and, moreover, $E = {\rm icr}F_K(E)$ where $F_K(E):=\bigcup\{\tilde{E} \in \mathcal{O}(K) \mid \tilde{E} \unlhd_K^* E\}$.
\end{proposition}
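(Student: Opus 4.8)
The plan is to fix a representative $x \in E$, so that $E = E_x$, and to exploit that the defining set simplifies: by the equivalences \eqref{e3.6a} the convex cone $F_K(E) = \bigcup\{\tilde E \mid \tilde E \unlhd_K^* E\}$ coincides with $F_K(x) = \{y \in K \mid y \unlhd_K x\}$ for every $x \in E$, so the target identity reads $E_x = {\rm icr}\, F_K(x)$. I would first dispose of the easy assertion that $E_x$ is a convex cone: that $z \eq_K x$ implies $\alpha z \eq_K x$ for $\alpha > 0$ follows from the first implication in \eqref{e3.5}, and that $z_1, z_2 \eq_K x$ implies $z_1 + z_2 \eq_K x$ follows from the second implication in \eqref{e3.5} together with \eqref{e3.5a}; since a cone closed under addition is convex, $E_x$ is a convex cone. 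It then remains to prove the two substantive facts, namely $E_x = {\rm icr}\, F_K(x)$ and the relative algebraic openness $E_x = {\rm icr}\, E_x$.

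For the identity $E_x = {\rm icr}\, F_K(x)$ I would apply Proposition~\ref{pr2.1} to the convex cone $C := F_K(x)$, which gives $z \in {\rm icr}\, C$ if and only if $z \in C$ and for every $y \in C$ there is $\lambda > 0$ with $z - \lambda y \in C$. For the inclusion ${\rm icr}\, C \subseteq E_x$, take $z \in {\rm icr}\, C$; then $z \in C$ gives $z \unlhd_K x$, while applying the criterion to the admissible point $y = x \in C$ yields $z - \lambda x \in C \subseteq K$ for some $\lambda > 0$, i.e. $x \unlhd_K z$, so $z \eq_K x$. For the reverse inclusion $E_x \subseteq {\rm icr}\, C$, take $z \eq_K x$ and $y \in C$; from $y \unlhd_K x \unlhd_K z$ and transitivity we get $y \unlhd_K z$, hence $z - \lambda y \in K$ for some $\lambda > 0$, and the point is to check that this difference stays in $C$: since $z \unlhd_K x$ there is $\mu > 0$ with $x - \mu z \in K$, whence $x - \mu(z - \lambda y) = (x - \mu z) + \mu\lambda y \in K$ because $y \in K$, so $z - \lambda y \unlhd_K x$ and thus $z - \lambda y \in C$. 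This additive manipulation inside the cone is the first technical heart of the argument.

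Finally, for relative algebraic openness I would apply Proposition~\ref{pr2.1} directly to the convex cone $E_x$: it suffices to show that for any $z, w \in E_x$ there is $\lambda > 0$ with $z - \lambda w \in E_x$. Since $w \eq_K z$ there is $\lambda_0 > 0$ with $z - \lambda_0 w \in K$, and since $x \unlhd_K z$ there is $\nu_1 > 0$ with $z - \nu_1 x \in K$. Choosing $\lambda, \nu > 0$ small enough that $\lambda/\lambda_0 + \nu/\nu_1 \le 1$, the vector $z - \lambda w - \nu x$ can be written as the nonnegative combination
\[
\frac{\lambda}{\lambda_0}(z - \lambda_0 w) + \frac{\nu}{\nu_1}(z - \nu_1 x) + \Bigl(1 - \frac{\lambda}{\lambda_0} - \frac{\nu}{\nu_1}\Bigr) z
\]
of elements of $K$, hence lies in $K$, giving $x \unlhd_K z - \lambda w$; together with $z - \lambda w \unlhd_K x$ (obtained exactly as in the previous paragraph from $z \unlhd_K x$ and $w \in K$) this yields $z - \lambda w \eq_K x$, i.e. $z - \lambda w \in E_x$. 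I expect this explicit nonnegative-combination decomposition to be the main obstacle, since it is what forces the difference to remain in the same equivalence class rather than merely in $K$; an alternative, should one prefer to cite rather than compute, is to deduce relative openness from $E_x = {\rm icr}\, F_K(x)$ together with the idempotency of the intrinsic-core operator on convex sets with nonempty intrinsic core.
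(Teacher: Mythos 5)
Your proof is correct and follows the same overall strategy as the paper for the central identity $E_x={\rm icr}\,F_K(x)$: both arguments identify $F_K(E)$ with $F_K(x)$ for $x\in E$ and then run Proposition~\ref{pr2.1} (equivalently \eqref{e2.22}) on the convex cone $F_K(x)$ in both directions. You differ in three smaller respects, all to your credit. First, for convexity of $E_x$ the paper writes $E_x=F_K(x)\bigcap G_K(x)$ and intersects two convex cones, whereas you verify closure under positive scaling and addition via \eqref{e3.5} and \eqref{e3.5a}; both are routine. Second, in the inclusion $E_x\subseteq{\rm icr}\,F_K(x)$ the paper invokes \eqref{e2.22} from the statement ``$y\unlhd_K z$ for all $y\in F_K(x)$,'' which strictly speaking only yields $z-\lambda y\in K$, while Proposition~\ref{pr2.1} applied to the cone $F_K(x)$ requires $z-\lambda y\in F_K(x)$; your computation $x-\mu(z-\lambda y)=(x-\mu z)+\mu\lambda y\in K$ supplies exactly the missing membership, so you have patched a step the paper leaves tacit. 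Third, the paper never separately proves that $E_x$ is relatively algebraic open (it follows from $E_x={\rm icr}\,F_K(x)$ and the idempotency of the intrinsic core, which is your suggested alternative), whereas you give a direct, self-contained verification via the nonnegative combination of $z-\lambda_0 w$, $z-\nu_1 x$ and $z$; that decomposition is correct and makes the ``relatively algebraic open'' clause of the proposition independent of any external fact about ${\rm icr}$. In short: same route, executed with somewhat more care at the two points where the paper is terse.
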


\begin{proof}
Let $E \in {\mathcal{O}}(K)$ and let $x \in E$. Then $E = E_x := \{y \in K \mid y \eq_K x\}$. It is easily seen that $E_x = F_K(x) \bigcap G_K(x)$, where $F_K(x) := \{y \in K \mid y \unlhd_K x\}$ and
$G_K(x):= \{y \in K \mid x \unlhd y\}$. We have seen above that $F_K(x)$ and $G_K(x)$ are convex cones and, consequently, $E_x$ as the intersection of two convex cones is a convex cone too.

Note also that for every $x \in E$ the equality $F_K(E) = F_K(x)$ holds. Let $y \in {\rm icr}F_K(x)$ for some $x \in E$. Since $F_K(x)$ is a convex cone, it follows from Proposition~\ref{pr2.1} that for each $z \in F_K(x)$ there exists $\lambda_z > 0$ such that $y - \lambda_z z \in F_K(x)$. Hence, for $x \in F_K(x)$ there exists $\lambda_x > 0$ such that $y - \lambda_x x \in F_K(x) \subset K$. This implies that $x \unlhd_K y$. In addition, since $y \in F_K(x)$, we have that $y \unlhd_K x$. Thus, $y \eq_K x$, i.e., $y \in E_x$, and we have proved that ${\rm icr}F_K(x) \subset E_x$.

To prove the reverse inclusion we observe that $y \unlhd_K x$ for all $y \in F_K(x)$. From this, applying \eqref{e2.22}, we conclude that $x \in {\rm icr}F_K(x)$ and consequently  $E_x \subseteq {\rm icr}F_K(x)$.  Indeed, let $z \in E_x$. Then $x \eq_K z$ and, since $y \unlhd_K x$ for all $y \in F_K(x)$, by transitivity of $\unlhd_K$ we obtain that $y \unlhd_K z$ for all $y \in F_K(x)$. Due to \eqref{e2.22} this implies that $z \in {\rm icr}F_K(x)$.
\end{proof}

\begin{definition}
{\rm Elements of $\mathcal{O}(K)$ will be called \textit{relatively algebraic open components} (or, shortly, \textit{open components}) of the convex cone $K$.}
\end{definition}
\smallskip

\begin{proposition}\label{pr2.23}
For the intrinsic core ${\rm icr}K$ of a convex cone $K$  to be non-empty, it is necessary and sufficient that the family $\mathcal{O}(K)$ of open components of the cone $K$ ordered by the partial order $\unlhd_K^*$ have the greatest element, i.e., such element $E_{\text{sup}} \in \mathcal{O}(K)$ that $E \unlhd^* E_{\text{sup}}$ for all $E \in \mathcal{O}(K)$. Moreover, in this case, ${\rm icr}K = E_{\text{sup}}$.
\end{proposition}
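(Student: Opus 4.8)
The plan is to reduce everything to the dominance characterisation of the intrinsic core recorded in \eqref{e2.22}, which states that $x \in {\rm icr}K$ exactly when $x \in K$ and $y \unlhd_K x$ for every $y \in K$; in other words, the points of ${\rm icr}K$ are precisely those that dominate the whole cone. Reading this through the factor $\mathcal{O}(K) = K/\eq_K$ and the induced order $\unlhd_K^*$, such a point is nothing but a representative of a greatest element, so the two conditions in the statement should turn out to be literal translations of one another.

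For the necessity direction I would fix a point $x \in {\rm icr}K$ and examine its equivalence class $E_x$. Using \eqref{e2.22} together with the transitivity of $\unlhd_K$, I would first show that $E_x$ coincides with ${\rm icr}K$: if $z \eq_K x$ then $x \unlhd_K z$, and since every $y \in K$ satisfies $y \unlhd_K x$, transitivity yields $y \unlhd_K z$ for all $y \in K$, whence $z \in {\rm icr}K$ by \eqref{e2.22}; conversely, any two points of ${\rm icr}K$ dominate each other and are therefore $\eq_K$-equivalent, so ${\rm icr}K$ cannot spill outside the single class $E_x$. Having identified ${\rm icr}K = E_x$, I would then read off that $E_x$ is the greatest element: for an arbitrary $E \in \mathcal{O}(K)$ and any $y \in E$ we have $y \unlhd_K x$, which by the definition of $\unlhd_K^*$ means $E \unlhd_K^* E_x$. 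This simultaneously produces the greatest element $E_{\text{sup}} = E_x$ and the claimed equality ${\rm icr}K = E_{\text{sup}}$.

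For the sufficiency direction I would start from a greatest element $E_{\text{sup}}$ and pick any representative $x \in E_{\text{sup}}$. For every $y \in K$ the class $E_y$ satisfies $E_y \unlhd_K^* E_{\text{sup}}$, which unwinds to $y \unlhd_K x$; thus $x$ dominates all of $K$, and \eqref{e2.22} gives $x \in {\rm icr}K$, so the intrinsic core is non-empty. The same computation shows $E_{\text{sup}} \subseteq {\rm icr}K$, and combined with the necessity argument (which forces ${\rm icr}K$ to be exactly the class of any one of its points) the two sets coincide, reproving ${\rm icr}K = E_{\text{sup}}$.

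I do not expect a serious obstacle here, since the heavy lifting is already done by \eqref{e2.22}. The one point deserving care is the collapse of ${\rm icr}K$ onto a single equivalence class: one must invoke transitivity of $\unlhd_K$ in both directions to see that every point dominating the cone is $\eq_K$-equivalent to every other such point, and that the $\eq_K$-class of one dominating point consists entirely of dominating points. Once this is nailed down, matching ``greatest element of $(\mathcal{O}(K),\unlhd_K^*)$'' with ``representative dominating all of $K$'' is immediate.
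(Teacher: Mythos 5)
Your proof is correct and follows the same route the paper indicates: the paper simply declares the proposition ``immediate from the equivalence \eqref{e2.22}'' (plus the fact that each open component is relatively algebraic open), and your argument is exactly that equivalence unwound through the quotient $\mathcal{O}(K)=K/\eq_K$, with the transitivity details filled in carefully.
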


\smallskip

The assertion of this proposition is immediate from the equivalence \eqref{e2.22} and from that $E_{\text{sup}}$ is relatively algebraic open.

\smallskip

\begin{example}[Revisiting Example \ref{ex2.1}]
{\rm Let $X$ be a infinite-dimensional real vector space and let $\{e_i,i \in I\}$ be a Hamel basis for $X$. Consider a convex cone $K \subset X$ consisting of non-zero vectors $x \in X$ whose coordinates $\{x_i,i\in I\}$ in the given basis $\{e_i,i \in I\}$ are nonnegative. For $x \in K$ by $I(x)$ we denote the subset of those indices from $I$ for which $x_i > 0$, that is, $I(x) := \{i \in I \mid x_i > 0\}$. The subset $I(x)$ is non-empty and finite for every $x \in K$. It is not difficult to see that $y \unlhd_K x \Leftrightarrow I(y) \subseteq I(x)$ and $y \eq_K x \Leftrightarrow I(y) = I(x)$. We conclude from this that $E$ is an open component of a cone $K$, $E \in {\mathcal{O}}(K)$, if and only if $E = \{ x \in K \mid I(x) = J\}$, where $J$ is a some finite subset of $I$. Furthermore, since each finite subset of $I$ is a proper subset of another finite subset of $I$ the family of open components of the cone $K$ does not have the greatest element and hence, through Proposition \ref{pr2.23}, ${\rm icr}K = \varnothing$. The latter property of the cone $K$ under consideration was directly proved previously in Example \ref{ex2.1}.}
\end{example}

\smallskip

\begin{corollary}
A convex cone $K \subset X$ is relatively algebraic open if and only if the family $\mathcal{O}(K)$ of its open components is a singleton, i.e., if and only if $\mathcal{O}(K) = \{K\} =\{{\rm icr}K\}$.
\end{corollary}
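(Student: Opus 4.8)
The plan is to read off both implications directly from the dominance-relation description of the intrinsic core developed above. The key tool for the forward implication is the reformulation \eqref{e2.22} of Proposition~\ref{pr2.1}, and for the reverse implication it is Proposition~\ref{pr3.6}, which guarantees that every equivalence class is relatively algebraic open. Since the statement is an equivalence, I would handle the two directions separately and then note the final identification of the families.

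First I would prove the implication ($\Rightarrow$). Assume $K$ is relatively algebraic open, that is, $K = {\rm icr}K$. Then every $x \in K$ lies in ${\rm icr}K$, so by \eqref{e2.22} we have $y \unlhd_K x$ for all $y \in K$. Applying this to an arbitrary pair $x_1, x_2 \in K$ gives both $x_2 \unlhd_K x_1$ and $x_1 \unlhd_K x_2$, hence $x_1 \eq_K x_2$. Thus all points of $K$ are $\eq_K$-equivalent, so $K$ comprises a single equivalence class and $\mathcal{O}(K) = \{K\}$.

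For the implication ($\Leftarrow$), suppose $\mathcal{O}(K)$ is a singleton, say $\mathcal{O}(K) = \{E\}$. Since $K$ is the disjoint union of all its equivalence classes and there is only one, necessarily $E = K$. Then $F_K(E) = \bigcup\{\tilde{E} \in \mathcal{O}(K) \mid \tilde{E} \unlhd_K^* E\} = E = K$, so Proposition~\ref{pr3.6} yields $K = E = {\rm icr}F_K(E) = {\rm icr}K$; that is, $K$ is relatively algebraic open.

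Finally, the identification $\mathcal{O}(K) = \{K\} = \{{\rm icr}K\}$ is immediate, since in either direction we have established $K = {\rm icr}K$, so the unique open component $K$ coincides with ${\rm icr}K$. I do not anticipate a genuine obstacle: both directions are essentially one-line consequences of \eqref{e2.22} and Proposition~\ref{pr3.6}. The only point requiring a little care is that a singleton family forces the single class to equal all of $K$, which relies on the disjoint-union decomposition $K = \bigcup\{E \mid E \in \mathcal{O}(K)\}$ recorded just before Proposition~\ref{pr3.6}.
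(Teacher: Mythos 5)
Your proof is correct and follows the same line of reasoning the paper intends: the corollary is stated without proof as an immediate consequence of the preceding machinery, and your two directions via \eqref{e2.22} and Proposition~\ref{pr3.6} (equivalently, via Proposition~\ref{pr2.23}, since a singleton family trivially has a greatest element) are exactly the intended one-line arguments. The point you flag at the end --- that the disjoint-union decomposition forces the unique class to be all of $K$ --- is indeed the only detail worth making explicit.
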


When $0 \in K$ it is easily verified that $E_0 = K \bigcap (-K) = L_K$, and $E_0$ is the least element of the partial ordered set $(\mathcal{O}(K), \unlhd_K^*)$, that is $E_0 \unlhd_K^* E$ for all $E \in \mathcal{O}(K)$. The next theorem shows that, regardless of whether $0 \in K$ or $0 \not \in K$, the partial ordered set $(\mathcal{O}(K), \unlhd_K^*)$ is an upper semilattice.

Recall \cite{Birkhoff} that a partially ordered set $(Z,\preceq)$ is called an \textit{upper semilattice} if each pair of elements $\{z_1,z_2\}
 \subset Z$ has the least upper bound in $Z$ commonly denoted by $z_1 \vee z_2$.

\begin{theorem}\label{th3.6}
The set $\mathcal{O}(K)$ of all open components of a convex cone $K$ equipped with the partial order $\unlhd_K^*$ is an upper semilattice and, in addition, the equality $E_x \vee E_y = E_{x+y}$ holds for all $x,y \in K$.
\end{theorem}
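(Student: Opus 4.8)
The plan is to verify directly that for arbitrary representatives $x \in E_x$ and $y \in E_y$ the class $E_{x+y}$ is the least upper bound of the pair $\{E_x,E_y\}$ in $(\mathcal{O}(K),\unlhd_K^*)$. Since a convex cone is closed under addition, $x+y \in K$, so $E_{x+y}$ is a well-defined element of $\mathcal{O}(K)$. Establishing the least-upper-bound property for every pair then shows that $\mathcal{O}(K)$ is an upper semilattice, and the formula $E_x \vee E_y = E_{x+y}$ is read off from the same argument.

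First I would check that $E_{x+y}$ is an upper bound of both $E_x$ and $E_y$. Since $(x+y)-x = y \in K$ and $(x+y)-y = x \in K$, the first implication in \eqref{e3.4} gives $x \unlhd_K (x+y)$ and $y \unlhd_K (x+y)$. By the definition of the factor order $\unlhd_K^*$ this means $E_x \unlhd_K^* E_{x+y}$ and $E_y \unlhd_K^* E_{x+y}$, so $E_{x+y}$ dominates both classes.

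Next I would prove that $E_{x+y}$ is the \emph{least} such bound. Let $E_z$ be any upper bound of $\{E_x,E_y\}$, so that $x \unlhd_K z$ and $y \unlhd_K z$ for a representative $z \in E_z$. Applying the additivity implication in \eqref{e3.5} (with $y_1 = x$, $y_2 = y$ and dominating point $z$) yields $x+y \unlhd_K z$, that is $E_{x+y} \unlhd_K^* E_z$. Hence $E_{x+y}$ precedes every upper bound of the pair, so it is the least upper bound, and by uniqueness of least upper bounds in a partially ordered set we may write $E_x \vee E_y = E_{x+y}$. Since least upper bounds exist for every pair, $(\mathcal{O}(K),\unlhd_K^*)$ is an upper semilattice.

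I do not anticipate a genuine obstacle here: the two structural implications \eqref{e3.4} and \eqref{e3.5}, already recorded for the dominance relation, supply precisely the ``upper bound'' and ``least'' halves of the argument. The only point worth flagging is well-definedness, namely that $E_{x+y}$ should depend only on the classes $E_x$, $E_y$ and not on the chosen representatives $x$, $y$. This comes for free: the least upper bound of two fixed elements of a poset is unique, so the class $E_{x+y}$ produced above is forced to coincide for all admissible choices of $x$ and $y$.
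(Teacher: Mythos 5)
Your proposal is correct and follows essentially the same route as the paper: show $E_{x+y}$ is an upper bound of $\{E_x,E_y\}$ and then that it precedes every upper bound. The only cosmetic difference is that you invoke the previously recorded implications \eqref{e3.4} and \eqref{e3.5} where the paper redoes the corresponding convexity computations inline (e.g.\ deducing $u-\tfrac{\lambda}{2}(x+y)\in K$ from $u-\lambda x\in K$ and $u-\mu y\in K$), and your closing remark on well-definedness via uniqueness of least upper bounds is a valid, if tacit in the paper, observation.
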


\begin{proof}
Let $E_1,E_2 \in \mathcal{O}(K)$ and let $x \in E_1$ and $y \in E_2$, i.~e., $E_1 =E_x, E_2=E_y$. Since $(1 -\lambda) x \in K$ for all $\lambda \in (0,1)$ and $y \in K$, then $ (x+y) - \lambda x \in K$ for all $\lambda \in (0,1)$ and consequently $x \unlhd_K x+y.$ Similarly, we get $y \unlhd_K x+y$. Hence $E_x \unlhd_K^* E_{x+y}$ and $E_y \unlhd_K^* E_{x+y}$.

Suppose now that $E_x \unlhd_K^* E$ and $E_y \unlhd_K^* E$ for some $E \in \mathcal{O}(K)$. Let $u \in E$. Then $x \unlhd_K u$ and $y \unlhd_K u$ or equivalently $u - \lambda x \in K$ and $u - \mu y \in K$ for some $\lambda >0, \mu >0$. Suppose that $\lambda \le \mu$. Then, since $K$ is a convex cone, we obtain that $u - \displaystyle\frac{\lambda}2 (x+y) \in K$. It proves that $x+y \unlhd_K u$ and hence $E_{x+y} \unlhd_K^* E$. Thus, $E_{x+y}=E_x \vee E_y$.
\end{proof}

Due to the equality $K = \bigsqcup \{E \mid E \in {\mathcal O}(K)\}$ we can consider open components from ${\mathcal O}(K)$ as `building blocks' for the cone $K$, and, moreover, in $K$ these blocks are arranged relatively to each other not irregularly but in accordance with the structure of the upper semilattice $({\mathcal O}(K), \unlhd^*)$.

\smallskip

This observation justifies the following definition.

\begin{definition} {\rm The structure of the upper semilattice $(\mathcal{O}(K), \unlhd_K^*)$ generated by a convex cone $K$ will be referred to as the \textit{internal geometric structure of the cone $K$}}.
\end{definition}

\smallskip

We will conclude this section with two propositions describing some subcones of a convex cone $K$ in terms of open components from ${\mathcal O}(K)$.

\begin{proposition}\label{pr3.7}
Let $E \in \mathcal{O}(K)$ be an arbitrary open component of a convex cone~$K$. Then ${\rm Lin}(E)\bigcap K$ is equal to the convex cone $F_K(E):=\bigcup\{\tilde{E} \in {\mathcal O}(K) \mid \tilde{E} \unlhd^*_K E\}$, i.e. ${\rm Lin}(E)\bigcap K = F_K(E)$.
\end{proposition}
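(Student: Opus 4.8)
The goal is to establish ${\rm Lin}(E)\bigcap K = F_K(E)$. Throughout I fix a representative $x_0 \in E$ and recall from the proof of Proposition~\ref{pr3.6} that $F_K(E) = F_K(x_0) = \{y\in K \mid y \unlhd_K x_0\}$ and that $E = {\rm icr}\,F_K(x_0)$. Since every open component $\tilde E$ with $\tilde E \unlhd_K^* E$ is a subset of $K$, the inclusion $F_K(E)\subseteq K$ is immediate, so the real content of the first containment is $F_K(E)\subseteq {\rm Lin}(E)$; the second containment is ${\rm Lin}(E)\bigcap K \subseteq F_K(E)$. I would prove these two inclusions separately.

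For $F_K(E)\subseteq {\rm Lin}(E)$, the plan is to show that the entire open segment joining $x_0$ to an arbitrary $z\in F_K(E)$ consists of points $\eq_K$-equivalent to $x_0$, hence lies in $E$. Fix $z\in F_K(E)$ and set $w_t := (1-t)x_0 + t z$ for $t\in(0,1)$; each $w_t$ lies in $K$ because $K$ is a convex cone. The relation $w_t \unlhd_K x_0$ follows from \eqref{e3.5}: from $z\unlhd_K x_0$ and $x_0\unlhd_K x_0$ one gets $tz\unlhd_K x_0$ and $(1-t)x_0\unlhd_K x_0$, whence $w_t = tz+(1-t)x_0 \unlhd_K x_0$. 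The reverse domination $x_0\unlhd_K w_t$ is a direct convexity computation: for $0<\mu<1-t$ the vector $w_t - \mu x_0 = (1-t-\mu)x_0 + tz$ is a sum of two elements of $K$, hence lies in $K$. Therefore $w_t \eq_K x_0$, i.e.\ $w_t\in E$, for every $t\in(0,1)$. Since $z - x_0 = \frac{1}{t}(w_t - x_0) \in {\rm Lin}(E)$ and $x_0\in E\subseteq {\rm Lin}(E)$, I conclude $z\in {\rm Lin}(E)$. Combined with $F_K(E)\subseteq K$, this yields $F_K(E)\subseteq {\rm Lin}(E)\bigcap K$.

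For the reverse inclusion, take $z\in {\rm Lin}(E)\bigcap K$ and write $z = a - b$ with $a,b\in E$, which is possible since ${\rm Lin}(E)=E-E$. Because $a\eq_K x_0$, in particular $a\unlhd_K x_0$, so there is $\rho>0$ with $x_0 - \rho a\in K$. Then for any $\mu\in(0,\rho)$ the vector $x_0 - \mu a = (x_0-\rho a) + (\rho-\mu)a$ lies in $K$, and adding $\mu b\in K$ gives $x_0 - \mu z = (x_0 - \mu a) + \mu b \in K$. Hence $z\unlhd_K x_0$, and since $z\in K$ this means $z\in F_K(x_0)=F_K(E)$, proving ${\rm Lin}(E)\bigcap K\subseteq F_K(E)$.

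The only genuinely delicate point is the first inclusion, and within it the verification that the open segment $(x_0,z)$ stays inside the single equivalence class $E$; this is precisely where the identity $E={\rm icr}\,F_K(x_0)$ from Proposition~\ref{pr3.6} is used, encoded here through the convex-cone implications \eqref{e3.5}. The reverse inclusion is routine once $z$ is written as a difference of two members of $E$. I would remark that, if one prefers, $F_K(E)\subseteq{\rm Lin}(E)$ may be recast as the standard fact ${\rm Lin}(F_K(E))={\rm Lin}({\rm icr}\,F_K(E))$ for a convex set with nonempty intrinsic core, but carrying it out directly via the segment $w_t$ keeps the argument self-contained and avoids invoking external machinery.
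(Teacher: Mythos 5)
Your proof is correct, but it follows a more elementary route than the paper's, so a comparison is worthwhile. The paper proves both inclusions by appealing directly to Theorem~\ref{th3.6} (the semilattice identity $E_x \vee E_y = E_{x+y}$): for the forward inclusion, $y \in \tilde E$ with $\tilde E \unlhd_K^* E$ and $z \in E$ give $y+z \in E_y \vee E_z = \tilde E \vee E = E$, whence $y = (y+z)-z \in E-E = {\rm Lin}(E)$; for the reverse one, $y = x - z$ with $x,z \in E$ gives $E = E_x = E_y \vee E$, i.e.\ $E_y \unlhd_K^* E$, so $y \in F_K(E)$. You instead re-derive the needed facts from the definition of $\unlhd_K$: your verification that each point $w_t = (1-t)x_0 + tz$ of the open segment lies in $E$, and your explicit production of $\mu$ with $x_0 - \mu z \in K$ in the reverse inclusion, are precisely the convexity manipulations that Theorem~\ref{th3.6} packages once and for all. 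What your version buys is self-containedness: it uses only \eqref{e3.5}, reflexivity, and the identity ${\rm Lin}(E)=E-E$, never the semilattice structure. What it costs is length, and it partly re-proves special cases of Theorem~\ref{th3.6} inline, whereas the paper exhibits the proposition as an almost immediate corollary of the structure already established; both arguments nonetheless share the same skeleton of two inclusions hinged on ${\rm Lin}(E)=E-E$. One cosmetic remark: your closing claim that the segment argument ``encodes'' the identity $E = {\rm icr}\,F_K(x_0)$ from Proposition~\ref{pr3.6} is not quite accurate --- your computation never invokes that identity, only \eqref{e3.5} and the definition of $\eq_K$ --- but this side remark does not affect the correctness of the proof.
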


\begin{proof}
First of all, we observe that, since $E$ is a convex cone, ${\rm Lin}(E)= E-E$.

Let $\tilde{E},E \in \mathcal{O}(K)$ be such that $\tilde{E} \unlhd_K^* E$. Since $\tilde{E} \vee E =E$, then for any $y \in \tilde{E}$ and any $z \in E$ we have $x:= y + z \in E.$ Consequently, $y=x-z \in E-E={\rm Lin}(E)$. It proves that $F_K(E) \subset {\rm Lin}(E)\bigcap K$.

Conversely, let $y \in {\rm Lin}(E)\bigcap K$. It follows from $y \in {\rm Lin}(E)$ that $y = x - z$ with $x,z \in E$. Since $x =y + z$, then $E_y\vee E= E$ and, consequently, $E_y \unlhd_K^* E$. It proves the reverse inclusion ${\rm Lin}(E)\bigcap K \subset F_K(E)$.
\end{proof}

\begin{proposition}
Assume that a subfamily $\mathcal{E}$ of the family of open components $\mathcal{O}(K)$ of a convex cone $K$ is linearly ordered by the relation $\unlhd_K^*$. Then for any $E \in \mathcal{E}$ which is not the least element of ${\mathcal{E}}$  the set $\widehat{F}_{\mathcal{E}}(E):= \bigcup\{\tilde{E} \in \mathcal{E} \mid \tilde{E} \lhd_K^* E\}$ is also a  convex cone and ${\rm Lin}(\widehat{F}_{\mathcal{E}}(E))\bigcap E = \varnothing$, where $\tilde{E} \lhd_K^* E \Leftrightarrow \tilde{E} \unlhd_K^* E, \tilde{E} \ne E$.
\end{proposition}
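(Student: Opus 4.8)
The plan is to treat the two assertions in turn, and in both cases the engine is the identity $E_x \vee E_y = E_{x+y}$ from Theorem~\ref{th3.6}, combined with the observation that on a \emph{linearly} ordered subfamily the join of two comparable classes is simply the larger of the two. First I would note that, since $E$ is not the least element of $\mathcal{E}$, there exists some $\tilde{E} \in \mathcal{E}$ with $\tilde{E} \lhd_K^* E$, so that $\widehat{F}_{\mathcal{E}}(E)$ is a genuinely nonempty set and talking about its linear hull makes sense.

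To show that $\widehat{F}_{\mathcal{E}}(E)$ is a convex cone, I would first observe that it is a union of the cones $\tilde{E}$ and is therefore itself closed under multiplication by positive scalars; because a cone is convex precisely when it is closed under addition, it suffices to verify the latter. Given $x,y \in \widehat{F}_{\mathcal{E}}(E)$, I pick $\tilde{E}_1,\tilde{E}_2 \in \mathcal{E}$ with $x \in \tilde{E}_1$, $y \in \tilde{E}_2$ and $\tilde{E}_1,\tilde{E}_2 \lhd_K^* E$. Using the linear ordering of $\mathcal{E}$ I may assume $\tilde{E}_1 \unlhd_K^* \tilde{E}_2$; then $\tilde{E}_2$ is the least upper bound of $\{\tilde{E}_1,\tilde{E}_2\}$ in $\mathcal{O}(K)$, and Theorem~\ref{th3.6} gives $E_{x+y} = E_x \vee E_y = \tilde{E}_1 \vee \tilde{E}_2 = \tilde{E}_2 \lhd_K^* E$. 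Hence $x+y \in \tilde{E}_2 \subseteq \widehat{F}_{\mathcal{E}}(E)$, which proves convexity.

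For the relation ${\rm Lin}(\widehat{F}_{\mathcal{E}}(E)) \bigcap E = \varnothing$ I would argue by contradiction. Since $\widehat{F}_{\mathcal{E}}(E)$ is a convex cone, its linear hull equals $\widehat{F}_{\mathcal{E}}(E) - \widehat{F}_{\mathcal{E}}(E)$, so a point $w$ lying in the intersection could be written as $w = a - b$ with $a \in \tilde{E}_a$, $b \in \tilde{E}_b$ and $\tilde{E}_a, \tilde{E}_b \lhd_K^* E$. Rewriting this as $a = w + b$ and applying Theorem~\ref{th3.6} to the sum of $w \in E$ and $b \in \tilde{E}_b$ yields $\tilde{E}_a = E_a = E_{w+b} = E \vee \tilde{E}_b$. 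Since $\tilde{E}_b \lhd_K^* E$ implies $\tilde{E}_b \unlhd_K^* E$, the join $E \vee \tilde{E}_b$ equals $E$, whence $\tilde{E}_a = E$, contradicting $\tilde{E}_a \ne E$. Therefore the intersection is empty.

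The step deserving the most care is the role of the linear-ordering hypothesis: it is used precisely to guarantee that the join $\tilde{E}_1 \vee \tilde{E}_2$ stays strictly below $E$ rather than possibly climbing up to $E$ itself. In a general (non-linearly ordered) upper semilattice one would only be able to conclude $\tilde{E}_1 \vee \tilde{E}_2 \unlhd_K^* E$, which may be an equality, so that $x+y$ could land in $E$ and convexity would fail. Recognizing that linearity collapses each join to a maximum, strictly below $E$, is thus the key structural observation underlying both parts of the argument.
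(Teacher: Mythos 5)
Your proof is correct and follows essentially the same route as the paper: positive homogeneity is inherited from the components, closure under addition is obtained from $E_{x+y}=E_x\vee E_y$ (Theorem~\ref{th3.6}) together with the fact that on a linearly ordered subfamily the join of two classes below $E$ is just the larger one and hence still strictly below $E$, and the disjointness ${\rm Lin}(\widehat{F}_{\mathcal{E}}(E))\bigcap E=\varnothing$ is derived by writing a hypothetical intersection point as a difference and applying the same join identity to reach a contradiction. Your closing remark on where linearity of the order is genuinely needed matches the paper's subsequent counterexample for non-linearly ordered subfamilies.
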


\begin{proof}
The positive homogeneity of $\widehat{F}_{\mathcal{E}}(E)$ follows from the positive homogeneity of each $E \in \mathcal{E}$. In addition, since the subfamily $\mathcal{E}$ is linearly ordered by $\unlhd_K^*$, then for any $y,z \in \widehat{F}_{\mathcal{E}}(E)$ we have either $E_{y+z} = E_y$ or $E_{y+z} = E_z$. Hence, $E_{y+z} \lhd_K^* E$ and $y+z \in \widehat{F}_{\mathcal{E}}(E)$. Thus, $\widehat{F}_{\mathcal{E}}(E)$ is a convex cone.

Assume now that ${\rm Lin}(\widehat{F}_{\mathcal{E}}(E))\bigcap E \ne \varnothing$ and let $z \in {\rm Lin}(\widehat{F}_{\mathcal{E}}(E))\cap E$. Since $\widehat{F}_{\mathcal{E}}(E))$ is a convex cone, ${\rm Lin}(\widehat{F}_{\mathcal{E}}(E)) = \widehat{F}_{\mathcal{E}}(E) - \widehat{F}_{\mathcal{E}}(E)$ and, consequently,  $z = u - v$ with $u,v \in \widehat{F}_{\mathcal{E}}(E)$. It implies that $z+v=u \in \widehat{F}_{\mathcal{E}}(E)$. On the other hand, since $v \lhd_K z$, it follows from Theorem \ref{th3.6} that $z+v \in E_z=E$. We arrive at a contradiction with $E\bigcap \widehat{F}_{\mathcal{E}}(E) = \varnothing$. This proves that ${\rm Lin}(\widehat{F}_{\mathcal{E}}(E))\bigcap E = \varnothing$.
\end{proof}

Simple examples show that when the subfamily $\mathcal{E} \subset \mathcal{O}(K)$ is not linearly ordered by $\unlhd_K^*$, the cone $\widehat{F}_{\mathcal{E}}(E)$ can be non-convex and $E \in \mathcal{E}$ can belong to ${\rm Lin}(\widehat{F}_{\mathcal{E}}(E))$.

\begin{example}[cf. with Example \ref{ex3.5}]
{\rm Let $X={\mathbb{R}}^2$ and $K:= {\mathbb{R}}^2_+ \setminus \{(0,0)\}$. The set $\mathcal{O}(K)$ of open components of $K$ consists of three elements: $E_1:= \{(y_1,y_2)\mid y_1 >0,y_2=0\}$, $E_2:=\{(y_1,y_2)\mid y_1 =0,y_2>0\}$, and $E_{1,2}:=\{(y_1,y_2)\mid y_1 >0,y_2>0\}$. The subfamily $\mathcal{E} = \mathcal{O}(K)$ is not linearly ordered by $\unlhd_K^*$ and $\widehat{F}_{\mathcal{E}}(E_{1,2})=\bigcup\{E \in \mathcal{E} \mid E \lhd_K^* E_{1,2}\} = E_1\bigcup E_2$ is non-convex cone with $E_{1,2} \subset {\rm Lin}(\widehat{F}_{\mathcal{E}}(E_{1,2})) = {\mathbb{R}}^2$.}
\end{example}

\section{Open components and faces of a convex cone}\label{sec3}

Let $Q$ be a convex set in a real vector space $X$.

A nonempty convex subset $F \subset Q$ is called a \textit{face} of $Q$ (see, for instance, \cite{Barker,KhTZ,Millan}) if it satisfies the following property: if for some $u,v \in Q$ there exists $\alpha \in (0,1)$ such that $\alpha u + (1-\alpha)v \in F$ then $u,v \in F$.

In other words, a nonempty convex subset $F \subset Q$ is \textit{a face} of $Q$ if every segment of $Q$, having in its relative interior an element of $F$, is entirely contained in $F$. The set $Q$ itself is its own face and the empty set is considered as a face of any convex set $Q$.

An intersection of any family of faces of $Q$ is also a face of $Q$ while for an union of some family of faces of $Q$ to be a face of $Q$, it is sufficient that  this family be linearly ordered by inclusion (proofs of these assertions can be found in \cite{Millan}, for example).

It is easy to see that when $K$ is a convex cone, each face $F$ of $K$ is a convex cone too. Moreover, when $K$ is an asymmetric convex cone, each face $F$ of $K$ is an asymmetric convex cone as well.

\begin{theorem}\label{th3.13}
For any open component $E \in {\mathcal O}(K)$ of a convex cone $K$ the set $F_K(E):=\bigcup\{\tilde{E} \in \mathcal{O}(K) \mid \tilde{E} \unlhd_K^* E\}$ is the minimal (by inclusion) face of $K$ containing $E$. Moreover, ${\rm icr}F_K(E) \ne \varnothing$ and ${\rm icr}F_K(E) = E$.

Conversely, for every face $F$ of convex cone $K$ such that ${\rm icr}F \ne \varnothing$ there exists an uniquely determined open component $E \in {\mathcal O}(K)$ such that $F = F_K(E)$ and hence ${\rm icr}F = E$.
\end{theorem}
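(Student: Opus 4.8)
The plan is to treat the two directions in turn, relying throughout on the single device that links the dominance relation to the face axiom: by definition $y \unlhd_K x$ means $x - \lambda y \in K$ for some $\lambda > 0$, and this lets one write $x$ as a convex combination of points of $K$, at which point the defining property of a face can be invoked. Note first that the ``moreover'' assertions require no new work: $E \subseteq F_K(E)$ because $E \unlhd_K^* E$, and the equality ${\rm icr}F_K(E) = E$ (hence its nonemptiness) is exactly Proposition~\ref{pr3.6}. So the only real content of the first half is that $F_K(E)$ is the \emph{minimal} face containing $E$.

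To show $F_K(E)$ is a face I would fix $x \in E$, so that $F_K(E) = F_K(x) = \{y \in K \mid y \unlhd_K x\}$, which is a nonempty convex cone. Suppose $\alpha u + (1-\alpha)v \in F_K(x)$ with $u,v \in K$ and $\alpha \in (0,1)$. Then $x - \lambda(\alpha u + (1-\alpha)v) \in K$ for some $\lambda > 0$; adding the element $\lambda(1-\alpha)v \in K$ and using $K + K \subseteq K$ gives $x - \lambda\alpha u \in K$, i.e. $u \unlhd_K x$, and symmetrically $v \unlhd_K x$. Hence $u,v \in F_K(x)$, which is the face property. For minimality, let $F'$ be any face of $K$ with $E \subseteq F'$ and take $y \in F_K(E)$, say $y \unlhd_K x$ for $x \in E$, so $z := x - \lambda y \in K$. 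Writing $x = \tfrac12(2z) + \tfrac12(2\lambda y)$ exhibits $x \in F'$ as the midpoint of a segment whose endpoints $2z$ and $2\lambda y$ lie in $K$ (immediate from the cone property, even when $z = 0$); the face property of $F'$ then forces $2\lambda y \in F'$, whence $y \in F'$ since $F'$ is a cone. Thus $F_K(E) \subseteq F'$.

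For the converse, given a face $F$ with ${\rm icr}F \ne \varnothing$, I would choose any $x \in {\rm icr}F$ and set $E := E_x \in \mathcal{O}(K)$. Since $F$ is itself a convex cone, Proposition~\ref{pr2.1} applied to $F$ shows that every $y \in F$ satisfies $x - \lambda y \in F \subseteq K$ for some $\lambda > 0$, i.e. $y \unlhd_K x$; hence $F \subseteq F_K(x) = F_K(E)$. Conversely, any $y \in F_K(E)$ has $y \unlhd_K x$, and the same decomposition $x = \tfrac12(2z) + \tfrac12(2\lambda y)$ with $z = x - \lambda y \in K$, combined with the face property of $F$ and $x \in F$, yields $y \in F$; so $F_K(E) \subseteq F$. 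Therefore $F = F_K(E)$, and ${\rm icr}F = E$ by Proposition~\ref{pr3.6}. Uniqueness is then automatic: if also $F = F_K(E')$ for an open component $E'$, then $E' = {\rm icr}F_K(E') = {\rm icr}F = E$, again by Proposition~\ref{pr3.6}.

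The hard part, conceptually, is recognizing the convex-combination decomposition $x = \tfrac12(2z) + \tfrac12(2\lambda y)$ as the right bridge between ``$y$ is dominated by $x$'' and ``$y$ lies in the face''; once this is in hand, every inclusion in both directions follows mechanically. The only points demanding care are verifying that the endpoints $2z, 2\lambda y$ genuinely belong to $K$ (so that the face axiom, which quantifies over $u,v \in K$, legitimately applies) and keeping straight which direction of the face definition is being used. No circularity arises, since the converse invokes Proposition~\ref{pr2.1} only on the subcone $F$ and the forward direction only on $K$.
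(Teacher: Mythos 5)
Your proof is correct, and its overall architecture matches the paper's: establish that $F_K(E)=F_K(x)$ is a face, prove minimality by writing $x$ as a proper convex combination of a positive multiple of $y$ and a point of $K$, read off the \emph{icr} claims from Proposition~\ref{pr3.6}, and in the converse use Proposition~\ref{pr2.1} on the subcone $F$ to get $F\subseteq F_K(x)$ before appealing to minimality. The one place you take a genuinely different route is the verification that $F_K(x)$ is a face: the paper deduces it from the semilattice identity $E_u\vee E_v=E_{u+v}$ of Theorem~\ref{th3.6} (writing $\alpha u\in E_u$, $(1-\alpha)v\in E_v$, hence $\alpha u+(1-\alpha)v\in E_u\vee E_v$), whereas you argue directly: from $x-\lambda(\alpha u+(1-\alpha)v)\in K$ you add $\lambda(1-\alpha)v\in K$ and use $K+K\subseteq K$ to get $x-\lambda\alpha u\in K$, i.e.\ $u\unlhd_K x$. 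Your version is more elementary --- it needs only the definition of $\unlhd_K$ and convexity of $K$, not the upper-semilattice structure --- and it also sidesteps a small imprecision in the paper's wording (the paper asserts $E_u\vee E_v=E_x=E$ for an arbitrary $x\in F_K(E)$, where really only $E_x\unlhd_K^* E$ is available; transitivity still rescues the paper's argument). What the paper's route buys in exchange is a conceptual link between the face property and the semilattice of open components. Your minimality decomposition $x=\tfrac12(2z)+\tfrac12(2\lambda y)$ is just a rescaled form of the paper's $x=\lambda y+(1-\lambda)z$ with $\lambda$ normalized into $(0,1)$, and your handling of the degenerate case $z=0$ and of the converse and uniqueness is sound.
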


\begin{proof}
Let $E \in \mathcal{O}(K)$. Since $F_K(E) = F_K(x)$ for any $x \in E$ and $F_K(x)$ is a convex cone for all $x \in K$, then $F_K(E)$ is a convex cone too (this follows also from Proposition \ref{pr3.7}). To prove that $F_K(E)$ is a face of $K$  we consider an arbitrary $x \in F_K(E)$ and assume that $x = \alpha u + (1 - \alpha)v$ for some $\alpha \in (0,1)$ and some  $u,v \in K$. Since $E_u$ and $E_v$ are convex cones then $\alpha u \in E_u$ and $(1 - \alpha)v \in E_v$ and, due to Theorem \ref{th3.6}, $x \in E_{u+v}=E_u \vee E_v$. It shows that  $E_u \vee E_v = E_x = E$ and hence $E_u \unlhd_K^* E$ and $E_v \unlhd_K^* E$. Consequently, $u \in E_u \subset F_K(E)$ and $v \in E_v \subset F_K(E)$. This proves that $F_K(E)$ is a face of $K$.

To prove that $F_K(E)$ is the minimal face of $K$ containing $E$ we need to show that $F_K(E) \subseteq F$ for any face $F$ of $K$ such that $E \subset F$.

 Take an arbitrary face $F$ of $K$ containing $E$, and let $x \in E \subset F$ and $y \in F_K(E)=F_K(x)$. Since $F_K(x) = \{u \in K \mid u \unlhd_K x\}$, by definition of $\unlhd_K$ we have that there exists $\lambda > 0$ such that $x -\lambda y \in K$. It follows from convexity of $K$ that $(1- \alpha)x+ \alpha(x - \lambda y) = x - \alpha \lambda y \in K$ for  all $\alpha \in (0,1)$. Hence we can assume that $x -\lambda y \in K$ for some $\lambda \in (0,1)$. Since $x - \lambda y = (1 - \lambda)x +\lambda(x-y) = (1- \lambda)\left(x + \displaystyle\frac{\lambda}{1-\lambda}(x-y)\right) \in K$ and $K$ is a cone we conclude that $x = \lambda y + (1-\lambda)z$, where $z := x + \displaystyle\frac{\lambda}{1-\lambda}(x-y) \in K$.

Thus, we have $x \in F$, where $F$ is a face of $K$, and $x = \lambda y + (1-\lambda)z$, where $y,z \in K$ and $\lambda \in (0,1)$. Hence, by the definition of a face, $y,z \in F$. Since $y$ is an arbitrary element of $F_K(x) = F_K(E)$ we conclude that $F_K(E) \subseteq F$.

The condition ${\rm icr}F_K(E) \ne \varnothing$ and the equality ${\rm icr}F_K(E) = E$ were actually proved in Proposition \ref{pr3.6}.

It remains to prove the second assertion of the theorem.

Let $F$ be a face of $K$ such that ${\rm icr}F \ne \varnothing$ and let $x \in {\rm icr}F$. Since $F$ is a convex cone, by \eqref{e2.22} we have that $y \unlhd_K x$ for all $y \in F$. It implies that $F \subseteq F_K(x)$. As it was proved above $F_K(x)$ is the minimal (by inclusion) face of $K$ containing $x$, hence $F=F_K(x)=F_K(E_x)$ and consequently ${\rm icr}F = E_x$. The latter equality shows that $E_x$ does not depend on the choice $x \in {\rm icr}F$. In addition, since for each $x \in K$ there exists an only open component $E \in {\mathcal O}(K)$ containing $x$, the open component $E$ satisfying the equalities $F=F_K(E)$ and ${\rm icr}F = E$ is unique too.
\end{proof}

\begin{remark}
{\rm It follows from the above theorem that the family of open components of a convex cone $K$ coincides with the family of nonempty intrinsic cores of faces of $K$, i.e. ${\mathcal O}(K) = \{{\rm icr}F \mid F\,\,\text{is a face of}\,\,K\,\,\text{such that}\,\,{\rm icr}F \ne \varnothing\}$. Thus, the assertion that every convex cone is the disjoint union of its open components can be reformulated as follows: each convex cone is the disjoint union of the nonempty intrinsic cores of its faces. In such form this fact was pointed out earlier in the papers \cite{Millan,Garcia}.}
\end{remark}

\begin{corollary}\label{cor3.19}
Let $K$ be a convex cone. For any point $x \in K$ the set $F_K(x):=\{y \in K \mid y \unlhd_K x\}$ is the minimal (by inclusion) face of $K$ containing $x$, and ${\rm icr}F_K(x) = E_x \ne \varnothing$.
\end{corollary}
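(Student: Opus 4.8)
The plan is to derive the corollary directly from Theorem~\ref{th3.13} by passing from an open component $E$ to an individual point $x \in E$. Since the open components of $K$ form a partition of $K$, every $x \in K$ lies in exactly one open component $E_x \in \mathcal{O}(K)$. The first step is to recall the identity $F_K(E_x) = F_K(x)$, which was established within the proof of Proposition~\ref{pr3.6} (there it is observed that $F_K(E) = F_K(x)$ for any representative $x \in E$). This identifies the pointwise object $F_K(x)$ appearing in the corollary with the open-component object $F_K(E_x)$ to which Theorem~\ref{th3.13} applies.

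With this identification in hand, two of the three claims follow at once. By Theorem~\ref{th3.13} applied to $E = E_x$, the set $F_K(E_x) = F_K(x)$ is a face of $K$, and since $x \in E_x \subseteq F_K(E_x)$ it is a face containing $x$. For the intrinsic core, Theorem~\ref{th3.13} gives ${\rm icr}F_K(E_x) = E_x$, hence ${\rm icr}F_K(x) = E_x$; this set is nonempty because it contains $x$.

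The only point requiring a moment's care is minimality: the theorem asserts that $F_K(E_x)$ is the smallest face \emph{containing $E_x$}, whereas the corollary claims it is the smallest face \emph{containing the single point $x$}. I would resolve this by observing that the minimality argument in the proof of Theorem~\ref{th3.13} in fact uses only the hypothesis $x \in F$: for $y \in F_K(x)$ one has $x - \lambda y \in K$ for some $\lambda \in (0,1)$, whence $x$ is written as a proper convex combination $x = \lambda y + (1-\lambda)z$ with $y,z \in K$, and the face property of $F$ then forces $y \in F$. Thus any face $F$ with $x \in F$ already satisfies $F_K(x) \subseteq F$, which is precisely the minimality required; as a byproduct this shows that every face containing $x$ contains all of $E_x$, so the two notions of minimality coincide. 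I do not expect any genuine obstacle here, as the corollary is essentially a specialization of the theorem to a single representative point.
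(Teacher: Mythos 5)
Your proposal is correct and follows essentially the same route as the paper, which states the corollary without a separate proof as an immediate specialization of Theorem~\ref{th3.13}. The one subtlety you rightly flag --- minimality among faces containing the point $x$ rather than the whole component $E_x$ --- is indeed already covered by the paper's own proof of Theorem~\ref{th3.13}, whose minimality argument uses only $x \in F$ (and the paper even invokes this pointwise form explicitly when proving the converse part of that theorem).
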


\begin{remark}
{\rm Taking into account the equivalences \eqref{e3.6a} and the assertion of Corollary~\ref{cor3.19} we can define the relation $\unlhd_K$ via minimal faces containing a point. This was noted by one of the Referees.}
\end{remark}
\smallskip

To formulate the next theorem we need the following notion.

\smallskip

Let $(Z,\preceq)$ be a linearly ordered set. An ordered pair $(A,B)$ of nonempty subsets of $Z$ is said \cite{Rosenstein} to be a \textit{cut} of $(Z,\preceq)$ if $A \bigcap B = \varnothing, A \bigcup B = Z$, and $u \prec v$ for all $u \in A$ and all $v \in B$. (Here $u \prec v$ means that $u \preceq v$ but $v \not\preceq u$.)
The sets $A$ and $B$ are respectively called the \textit{lower class} and the \textit{upper class} of a cut $(A,B)$.

\smallskip

\begin{theorem}\label{th3.13b}
Assume that the family ${\mathcal O}(K)$ of open components of a convex cone $K$ is linearly ordered by the relation $\unlhd^*_K$. For any cut $\Sigma := (\underline{\mathcal O}(K),\overline{\mathcal O}(K))$ of $({\mathcal O}(K),\unlhd^*_K)$ the set $F_K(\Sigma):= \bigcup\{E \mid E \in \underline{\mathcal O}(K)\}$ is a face of $K$, and

$($i$)$ if the lower class $\underline{\mathcal O}(K)$ has the greatest element $E_{max}$, then $F_K(\Sigma)=F_K(E_{max})$ and hence ${\rm icr}F_K(\Sigma) = {\rm icr}F_K(E_{max}) = E_{max} \ne \varnothing$;

$($ii$)$ in the case when $\underline{\mathcal O}(K)$ does not have the greatest element, ${\rm icr}F_K(\Sigma) = \varnothing$.
\end{theorem}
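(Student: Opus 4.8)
The plan is to follow the pattern of the proof of Theorem~\ref{th3.13}, first establishing that $F_K(\Sigma)$ is a face and then treating the two cases for its intrinsic core separately. I would begin with the decisive structural observation that, being the lower class of a cut, $\underline{\mathcal O}(K)$ is downward closed: if $E \in \underline{\mathcal O}(K)$ and $\tilde E \unlhd^*_K E$, then $\tilde E \in \underline{\mathcal O}(K)$, for otherwise $\tilde E \in \overline{\mathcal O}(K)$ and the cut condition would force $E \unlhd^*_K \tilde E$ with $\tilde E \not\unlhd^*_K E$, contradicting $\tilde E \unlhd^*_K E$. Positive homogeneity of $F_K(\Sigma)$ is immediate from that of each component. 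For closure under addition I would take $y,z \in F_K(\Sigma)$, so that $E_y, E_z \in \underline{\mathcal O}(K)$; by Theorem~\ref{th3.6} one has $E_{y+z} = E_y \vee E_z$, and since $\mathcal O(K)$ is linearly ordered this join is simply $\max\{E_y, E_z\} \in \underline{\mathcal O}(K)$. Hence $F_K(\Sigma)$ is a convex cone.

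To show that $F_K(\Sigma)$ is a face I would take $x \in F_K(\Sigma)$ together with a representation $x = \alpha u + (1-\alpha)v$ for some $\alpha \in (0,1)$ and $u,v \in K$, and argue as in Theorem~\ref{th3.13}: positive scaling preserves equivalence classes, so by Theorem~\ref{th3.6} we get $E_x = E_{\alpha u} \vee E_{(1-\alpha)v} = E_u \vee E_v = \max\{E_u, E_v\}$ by linearity of the order. Consequently $E_u \unlhd^*_K E_x$ and $E_v \unlhd^*_K E_x$, with $E_x \in \underline{\mathcal O}(K)$ because $x \in F_K(\Sigma)$; downward closedness then yields $E_u, E_v \in \underline{\mathcal O}(K)$, i.e. $u,v \in F_K(\Sigma)$, which is the face property.

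For part (i), assuming $\underline{\mathcal O}(K)$ has a greatest element $E_{max}$, I would verify the set equality $\underline{\mathcal O}(K) = \{\tilde E \in \mathcal O(K) \mid \tilde E \unlhd^*_K E_{max}\}$: the inclusion $\subseteq$ holds because $E_{max}$ is greatest in $\underline{\mathcal O}(K)$, and the reverse inclusion holds by downward closedness. By the very definition of $F_K(E_{max})$ this gives $F_K(\Sigma) = F_K(E_{max})$, and then Theorem~\ref{th3.13} delivers ${\rm icr}F_K(\Sigma) = {\rm icr}F_K(E_{max}) = E_{max} \ne \varnothing$.

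The only genuinely new point is (ii), which I expect to be the main obstacle. The plan is to argue by contradiction: suppose $x \in {\rm icr}F_K(\Sigma)$. Since $F_K(\Sigma)$ is itself a convex cone, I would apply the characterization \eqref{e2.22} with $F_K(\Sigma)$ in place of $K$, obtaining $y \unlhd_{F_K(\Sigma)} x$ for every $y \in F_K(\Sigma)$. The key step is to transfer this back to $K$: a dominance witness $x - \lambda y \in F_K(\Sigma)$ also lies in $K$, so dominance inside the subcone $F_K(\Sigma)$ implies $y \unlhd_K x$. Letting $y$ range over each $E \in \underline{\mathcal O}(K)$ then gives $E \unlhd^*_K E_x$ for all $E \in \underline{\mathcal O}(K)$, while $E_x \in \underline{\mathcal O}(K)$ since $x \in F_K(\Sigma)$; thus $E_x$ would be the greatest element of $\underline{\mathcal O}(K)$, contradicting the hypothesis of (ii). The subtlety here — and the reason (ii) carries the real content — is precisely that the intrinsic core of the sub-cone $F_K(\Sigma)$ has to be read through its own dominance relation before being compared with that of $K$; equivalently, one could route the argument through Proposition~\ref{pr2.23} applied to $F_K(\Sigma)$, after checking that the family of open components of $F_K(\Sigma)$ is exactly $\underline{\mathcal O}(K)$.
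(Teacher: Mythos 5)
Your proof is correct, but it takes a genuinely more self-contained route than the paper in the two places where real work is needed. For the face property, the paper does not verify the definition directly: it writes $F_K(\Sigma)$ as the union $\bigcup\{F_K(\tilde{E}) \mid \tilde{E} \in \underline{\mathcal O}(K)\}$ of a family of faces linearly ordered by inclusion and invokes the fact (Proposition 2.8 of Diaz Millan--Roshchina, quoted in Section \ref{sec3}) that such a union is again a face; you instead re-run the face verification from Theorem \ref{th3.13} inside $F_K(\Sigma)$, using Theorem \ref{th3.6} together with the downward closedness of the lower class of a cut --- an observation the paper leaves implicit. For part (ii), the paper again leans on Theorem \ref{th3.13}: if ${\rm icr}F_K(\Sigma) \ne \varnothing$, the converse half of that theorem yields $\tilde{E} \in {\mathcal O}(K)$ with $F_K(\Sigma) = F_K(\tilde{E})$, and this $\tilde{E}$ is then forced to be a greatest element of $\underline{\mathcal O}(K)$. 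You instead apply the characterization \eqref{e2.22} to the convex cone $F_K(\Sigma)$ itself and transfer dominance witnesses from the subcone to $K$, which needs only the inclusion $F_K(\Sigma) \subseteq K$, concluding that $E_x$ would be a greatest element of $\underline{\mathcal O}(K)$. Both arguments reach the same contradiction; the paper's is shorter because it recycles Theorem \ref{th3.13} wholesale, while yours avoids the converse part of that theorem entirely and makes explicit the subtle point that intrinsic-core membership in the subcone must be read through the subcone's own dominance relation, of which only the easy transfer direction is required. Your part (i) also fills in the detail the paper dismisses as evident, namely that downward closedness gives $\underline{\mathcal O}(K) = \{\tilde{E} \in {\mathcal O}(K) \mid \tilde{E} \unlhd_K^* E_{max}\}$ and hence $F_K(\Sigma) = F_K(E_{max})$.
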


\begin{proof}
With each  $\tilde{E} \in \underline{\mathcal{O}}(K)$  we associate the set $F_K(\tilde{E}) = \bigcup\{\hat{E} \in \mathcal{O}(K) \mid \hat{E} \unlhd_K^* \tilde{E}\}$ which is a face of $K$ due to Theorem \ref{th3.13} proved above. It is easy to see that $F_K(\Sigma) = \bigcup\{F_K(\tilde{E}) \mid \tilde{E} \in \underline{\mathcal{O}}(K)\}$ and consequently $F_K(\Sigma)$ is the union of the linearly ordered family of faces of $K$. By Proposition~2.8 from \cite{Millan} the set $F_K(\Sigma)$ is also a face of $K$.

\smallskip

The assertion (\textit{i}) is evident. To prove (\textit{ii}) we suppose to the contrary that $\underline{\mathcal O}(K)$ does not have the greatest element and ${\rm icr}F_K(\Sigma) \ne \varnothing$. By Theorem \ref{th3.13} we have, since ${\rm icr}F_K(\Sigma) \ne \varnothing$,  that there exists $\tilde{E} \in {\mathcal O}(K)$ such that $F_K(\Sigma) = F_K(\tilde{E})$. From the latter equality we conclude that $\tilde{E} \in \underline{\mathcal O}(K)$ and, since the elements of ${\mathcal O}(K)$ do not intersect each other, $E \unlhd^*_K \tilde{E}$ for all $E \in \underline{\mathcal O}(K)$. This contradicts the assumption that $\underline{\mathcal O}(K)$ does not have the greatest element.
\end{proof}

Below we will need the following corollary of Theorem \ref{th3.13b}.

\begin{corollary}\label{cor3.13a}
 Assume that the family $\mathcal{O}(K)$ of a convex cone $K$ is linearly ordered by $\unlhd_K^*$. Then for any open component $E \in {\mathcal O}(K)$ which is not the least element of ${\mathcal O}(K)$ the set $\widehat{F}_K(E):=\bigcup\{\tilde{E} \in \mathcal{O}(K) \mid \tilde{E} \lhd_K^* E\}$ is  a face of $K$, and

\smallskip

$($i$)$ if $\{\tilde{E} \in {\mathcal O}(K)\mid \tilde{E} \lhd^* E\}$ has the greatest element $E_{max}$, then $\widehat{F}_K(E)=F_K(E_{max})$ and hence ${\rm icr}\widehat{F}_K(E) = {\rm icr}F_K(E_{max}) = E_{max} \ne \varnothing$;

\smallskip

$($ii$)$ in the case when $\{\tilde{E} \in {\mathcal O}(K)\mid \tilde{E} \lhd^* E\}$ does not have the greatest element, ${\rm icr}\widehat{F}_K(E)= \varnothing$.
\end{corollary}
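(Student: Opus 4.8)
The plan is to exhibit $\widehat{F}_K(E)$ as the lower face associated with a particular cut of the linearly ordered set $(\mathcal{O}(K),\unlhd_K^*)$, so that every assertion follows at once from Theorem~\ref{th3.13b}. To this end I would put
$$
\underline{\mathcal O}(K) := \{\tilde E \in \mathcal{O}(K) \mid \tilde E \lhd_K^* E\},\qquad \overline{\mathcal O}(K) := \{\tilde E \in \mathcal{O}(K) \mid E \unlhd_K^* \tilde E\},
$$
and argue that $\Sigma := (\underline{\mathcal O}(K),\overline{\mathcal O}(K))$ is a cut of $(\mathcal{O}(K),\unlhd_K^*)$.

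Verifying the cut axioms is the only real work, and it is where the hypotheses are consumed. Because $\mathcal{O}(K)$ is linearly ordered, for every $\tilde E$ one has $\tilde E \unlhd_K^* E$ or $E \unlhd_K^* \tilde E$; splitting the first case according to whether $\tilde E = E$, and using antisymmetry of $\unlhd_K^*$, shows that $\underline{\mathcal O}(K)$ and $\overline{\mathcal O}(K)$ are disjoint and cover $\mathcal{O}(K)$. The upper class is nonempty since $E \in \overline{\mathcal O}(K)$, and the lower class is nonempty \emph{precisely} because $E$ is assumed not to be the least element of $\mathcal{O}(K)$ — this is the single place that hypothesis is used. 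Finally, for $\tilde E_1 \in \underline{\mathcal O}(K)$ and $\tilde E_2 \in \overline{\mathcal O}(K)$ transitivity gives $\tilde E_1 \unlhd_K^* \tilde E_2$, and were $\tilde E_2 \unlhd_K^* \tilde E_1$ to hold as well, transitivity with $E \unlhd_K^* \tilde E_2$ would yield $E \unlhd_K^* \tilde E_1$, contradicting $\tilde E_1 \lhd_K^* E$; hence $\tilde E_1 \lhd_K^* \tilde E_2$, which is the required strict-separation condition.

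Once $\Sigma$ is known to be a cut, I would simply read off the conclusions. By construction $F_K(\Sigma) = \bigcup\{E' \mid E' \in \underline{\mathcal O}(K)\} = \widehat{F}_K(E)$, so Theorem~\ref{th3.13b} immediately gives that $\widehat{F}_K(E)$ is a face of $K$. The set $\{\tilde E \mid \tilde E \lhd_K^* E\}$ appearing in (i) and (ii) is exactly the lower class $\underline{\mathcal O}(K)$, so its having (respectively lacking) a greatest element is precisely the dichotomy of Theorem~\ref{th3.13b}(i) (respectively (ii)); in case (i) that theorem yields $\widehat{F}_K(E) = F_K(E_{max})$, and then Theorem~\ref{th3.13} supplies ${\rm icr}F_K(E_{max}) = E_{max} \ne \varnothing$, while in case (ii) it yields ${\rm icr}\widehat{F}_K(E) = \varnothing$. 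The main obstacle, such as it is, is purely bookkeeping: making sure the cut is nontrivial (both classes nonempty) and that the greatest-element condition on $\underline{\mathcal O}(K)$ is transported faithfully into the hypotheses of Theorem~\ref{th3.13b}; no new geometric argument about the cone $K$ is needed.
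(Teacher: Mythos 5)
Your proposal is correct and is exactly the paper's argument: the paper proves this corollary in one line by observing that $\bigl(\{\tilde{E} \in {\mathcal O}(K)\mid \tilde{E} \lhd^*_K E\},\ \{\tilde{E} \in {\mathcal O}(K)\mid E \unlhd^*_K \tilde{E}\}\bigr)$ is a cut of $({\mathcal O}(K),\unlhd^*_K)$ and invoking Theorem~\ref{th3.13b}. Your verification of the cut axioms, including the use of the hypothesis that $E$ is not the least element to ensure the lower class is nonempty, simply makes explicit what the paper leaves to the reader.
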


For proving of this corollary it is enough to note that the pair ($\{\tilde{E} \in {\mathcal O}(K)\mid \tilde{E} \lhd^* E\}, \{\tilde{E} \in {\mathcal O}(K)\mid E \unlhd^* \tilde{E}\}$) is a cut of $({\mathcal O}(K),\unlhd^*_K)$.

\smallskip

When a convex cone $K$ is such that the family ${\mathcal O}(K)$ is finite, the internal geometric structure of $K$, i.e. the upper semilattice $({\mathcal O}(K), \unlhd^*_K)$, is order isomorphic to the family of faces of $K$ partially ordered by the set-theoretic inclusion. However, when ${\mathcal O}(K)$ is infinite the internal geometric structure of a convex cone $K$ may differ from its facial structure.

\smallskip

\begin{example}\label{ex3.17}
{\rm Let $X$ be an infinite-dimensional real vector space and $B = \{e_i\}_{i \in I}$ be a Hamel basis of $X$. Since $X$ is infinite-dimensional we can choose a countable subset $B_0 \subset B$. Take one element from $B_0$ and denote it by $e_{+\infty}$. Then, enumerate elements from $B_0 \setminus \{e_{+\infty}\}$ by integer numbers, so we have $B_0 = \{e_i,i \in {\mathbb Z},e_{+\infty}\}$, where ${\mathbb Z}$ stands for the set of integer numbers. For each $x \in X$ by $\{x_i, i \in {\mathbb Z},x_{+\infty}\}$ we denote its coordinates in the basis $B$ that correspond to basis vectors from $B_0$. Since each vector $x \in X$ have only a finite number of nonzero coordinates in the basis $B$, the sum $x_{+\infty} + \sum_{i \in {\mathbb Z}(m)}x_i$, where ${\mathbb Z}(m)= \{i \in {\mathbb Z} \mid i \le m\}$, is well-defined for any $x \in X$ and any integer number $m$. Define the following sets $E_{+\infty} := \{x \in X \mid l_{+\infty}
(x) > 0\}$ and $E_m := \{x \in X \mid l_{+\infty}(x) = 0; l_s(x)=0, s \in {\mathbb Z}, m < s; l_m(x) > 0\}\,\,\text{for}\,\,m \in {\mathbb Z},$ where $l_{+\infty}(x) := x_{+\infty} + \sum_{i \in {\mathbb Z}}x_i$ and $l_s(x) := \sum_{i \in {\mathbb Z}(s)}x_i$ for $s \in {\mathbb Z}$.

Since  $l_{+\infty}$ and $l_s, s \in {\mathbb Z},$ are linear functions on $X$, the sets $E_{+\infty}$ and $E_m, m \in {\mathbb Z},$ are convex cones in $X$. Moreover, if $m < n$ for $n \in {\mathbb Z}\cup \{+\infty\}$ and $m \in {\mathbb Z}$ then for every $x \in E_n$ and $y \in E_m$ we have $x+y \in E_n$. Indeed, if $y \in E_m$ then $l_{+\infty}(y) = 0; l_s(y)=0, s \in {\mathbb Z}, m < s$ and if $x \in E_n$ then $l_{+\infty}(x) = 0; l_s(x)=0, s \in {\mathbb Z}, n < s$ and $l_n(x) > 0$. Therefore, for $x + y$ we have $l_{+\infty}(x+y) = 0; l_s(x+y)=0, s \in {\mathbb Z}, n < s$ and $l_n(x+y) > 0$. Hence, $x + y \in E_n$.

It follows from these properties that the set $K := (\bigcup_{m \in {\mathbb Z}}E_m)\cup E_{+\infty}$ is an asymmetric convex cone (moreover, $K$ is an asymmetric conical halfspace in $X$) with $L_K = \{x \in X \mid l_{+\infty}(x) = 0; l_m(x)=0, m \in {\mathbb Z}\}$.

The relation $\unlhd_K$ is defined on $K$ as follows: $y \lhd_K x$ if and only if ($x \in E_{+\infty}$ and $y \in \bigcup_{m \in {\mathbb Z}}E_m$) or ($x \in E_m$ and $y \in E_n$ with $n,m \in {\mathbb Z}$ such that $n < m$); $x \eq_K y$ if and only if $x,y \in E_m$ for some $m \in {\mathbb Z} \cup \{+\infty\}$. We conclude that $({\mathcal O}(K),\unlhd_K^*)$ is such that ${\mathcal O}(K) = \{E_m,m \in {\mathbb Z},E_{+\infty}\}$ and $\unlhd_K^*$ is the linear order which is identical with the natural ordering of ${\mathbb Z}\cup \{+\infty\}$.

The open component $E_{+\infty}$ is the greatest element of $({\mathcal O}(K),\unlhd_K^*)$, while the subfamily $\{E \in {\mathcal O}(K) \mid E \lhd_K^* E_{+\infty}\} = \{E_m, m \in {\mathbb Z}\}$ does not have the greatest element. Thus, due to the assertion ($ii$) of Corollary \ref{cor3.13a} $\widehat{F}_K(E_{+\infty}) = \bigcup_{m \in {\mathbb Z}}E_m$ is a face of $K$ but for it there is no $E \in {\mathcal O}(K)$ such that $\widehat{F}_K(E_{+\infty}) = F_K(E)$.}
\end{example}

\smallskip

Theorem \ref{th3.13} shows that the mapping $E \mapsto F_K(E)$ which maps the upper semilattice $({\mathcal O}(K), \unlhd^*_K)$ of open components of a convex cone $K$ into the lattice of all faces of the same convex cone $K$ partially ordered by inclusion is injective and preserves order relations. However, this mapping is not surjective in general, since its full image  is only the subfamily of those faces whose intrinsic core is nonempty.  At the same time,  as it follows from Theorem \ref{th3.13b} and Example \ref{ex3.17}, in the case  when ${\mathcal O}(K)$ is infinite,  $K$ may have a lot of faces with empty intrinsic cores. Hence, the internal geometric structure of a convex cone may not be order isomorphic its facial structure.

\section{Internal geometric structure of conical halfspaces}\label{sec4}

In this section we focus on a special class of convex cones, namely on conical halfspaces (the definition of conical halfspaces and some their properties were briefly discussed  in Section~\ref{sec1}).

As above, the symbol $\unlhd_H$ denotes the dominance relation on the conical halfspace $H \subset X$ and $\mathcal{O}(H)$ is the family of open components of $H$ ordered by the partial order $\unlhd_H^*$ which is the factorization of $\unlhd_H$ by $\eq_H$.

\begin{proposition}
Let $H \subset X$ be a conical halfspace in a real vector space $X$.
Then the dominance relation $\unlhd_H$ defined on $H$ is total.
\end{proposition}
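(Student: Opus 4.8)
The plan is to unpack what \emph{total} means here: for every pair $x,y \in H$ I must exhibit either a $\lambda>0$ with $x-\lambda y \in H$ (which is exactly $y \unlhd_H x$) or a $\mu>0$ with $y-\mu x \in H$ (which is exactly $x \unlhd_H y$). The guiding observation is that the second alternative can be recast in terms of $-H$: since $H$ is a cone, $x-\lambda y \in -H$ is equivalent to $\lambda y - x \in H$, and dividing by $\lambda>0$ gives $y-\tfrac1\lambda x \in H$, i.e. $x \unlhd_H y$. Consequently it suffices to study the single one-parameter family $x-\lambda y$, $\lambda>0$, and to show that at least one of its members lands in $H \cup (-H)$; membership in $H$ settles one direction of the dichotomy and membership in $-H$ settles the other, so there is no need to treat $x$ and $y$ symmetrically.

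Next I would invoke Lemma~\ref{lem1.1} to describe the ambient partition. If $H$ is not asymmetric, then $0 \in H$ and $X = (-H)\cup H$, so \emph{every} $\lambda>0$ already yields $x-\lambda y \in H \cup (-H)$ and the claim is immediate. The substantive case is the asymmetric one, where Lemma~\ref{lem1.1} supplies the disjoint decomposition $X = (-H)\sqcup L_H \sqcup H$ with $L_H$ a vector subspace and $H \cap L_H = \varnothing$. Here $X \setminus (H \cup (-H)) = L_H$, so the task reduces to finding a single $\lambda>0$ with $x-\lambda y \notin L_H$.

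The heart of the argument---the one step requiring a genuine idea rather than bookkeeping---is showing that $x-\lambda y$ can lie in $L_H$ for at most one value of $\lambda>0$. I would argue by contradiction: if $x-\lambda_1 y \in L_H$ and $x-\lambda_2 y \in L_H$ with $\lambda_1 \ne \lambda_2$, then, since $L_H$ is a subspace, their difference $(\lambda_2-\lambda_1)y \in L_H$, and dividing by the nonzero scalar forces $y \in L_H$, contradicting $y \in H$ together with $H \cap L_H = \varnothing$. Thus at most one positive $\lambda$ is excluded; as there are infinitely many positive reals, an admissible $\lambda$ exists, giving $x-\lambda y \in H \cup (-H)$ and hence one of the two dominance relations. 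The main obstacle is therefore confined entirely to the asymmetric case and specifically to this ``at most one $\lambda$'' observation; everything else is read off directly from Lemma~\ref{lem1.1}.
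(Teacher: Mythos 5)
Your proof is correct. It rests on the same foundation as the paper's --- the trichotomy $X = (-H)\sqcup L_H\sqcup H$ supplied by Lemma~\ref{lem1.1} in the asymmetric case, and $X=(-H)\cup H$ in the non-asymmetric one --- but you handle the troublesome middle alternative differently. The paper simply takes $\lambda=1$: if $x-y\in H$ then $y\unlhd_H x$, if $y-x\in H$ then $x\unlhd_H y$, and if $x-y\in L_H$ then $x\eq_H y$ by the implications \eqref{e3.4} (which rely on $L_H+H=H$), so comparability holds in every case. You instead avoid $L_H$ altogether by observing that, $L_H$ being a subspace and $y\notin L_H$, the ray $\{x-\lambda y \mid \lambda>0\}$ can meet $L_H$ for at most one value of $\lambda$, so some $\lambda$ lands in $H\cup(-H)$; this is your genuinely new ingredient and it is sound. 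The paper's route is slightly shorter and yields more in the $L_H$ case (full equivalence $x\eq_H y$, not just comparability), which it exploits elsewhere, whereas your perturbation argument is self-contained and never needs the identity $L_H+H=H$. Your treatment of the non-asymmetric case ($0\in H$, hence $X=(-H)\cup H$ and every $\lambda$ works) is in fact more direct than the paper's, which passes to the asymmetric part $\widehat H$ and then invokes \eqref{e2.23} for the points of $L_H$.
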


\begin{proof}
Suppose that $H$ is an asymmetric  conical halfspace in $X$, then one of the following three alternatives, $y - x \in H$, $x - y \in H$, $y - x \in L_H$, holds for any $x,y \in H$.  Consequently, in this case the assertion follows from the implications \eqref{e3.4}.

Now let a conical halfspace $H$ be not asymmetric. Then $H = L_H \bigcup \widehat{H}$, where $\widehat{H}:= H \setminus L_H$ is the  asymmetric part of $H$, and moreover, since $\widehat{H} = - (X \setminus H)$, then $\widehat{H}$ is an asymmetric conical halfspace. The restriction of $\unlhd_H$ on $\widehat{H}$ coincides with $\unlhd_{\widehat{H}}$ and hence, as proven above, is total on $\widehat{H}$. In addition, through \eqref{e2.23}, we have that $x \unlhd_H y$ for all $x \in L_H$ and all $y \in H$. Since $H = L_H \bigcup \widehat{H}$, it proves that $\unlhd_H$ is total on $H$.
\end{proof}

\begin{corollary}
The family $\mathcal{O}(H)$ of open components of a conical halfspace $H$ is linearly ordered by $\unlhd_H^*$.
\end{corollary}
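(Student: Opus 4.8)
The plan is to read this corollary off the proposition just proved, since it is essentially a restatement of the totality of $\unlhd_H$ at the level of the quotient. The conceptual fact in the background is that a preorder on a set is total (connected) precisely when the partial order it induces on the quotient by its symmetric part is a linear order; I would simply specialize this to $\unlhd_H$ and $\eq_H$.

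First I would recall the definitions already fixed in the text: $\mathcal{O}(H) = H/\eq_H$ is the family of equivalence classes of the equivalence relation $\eq_H$, and the order $\unlhd_H^*$ is the factor of $\unlhd_H$, meaning that for $E_1,E_2 \in \mathcal{O}(H)$ one has $E_1 \unlhd_H^* E_2$ if and only if $x_1 \unlhd_H x_2$ for some (equivalently, all) representatives $x_1 \in E_1$ and $x_2 \in E_2$. The fact that this prescription is independent of the chosen representatives, and hence yields a well-defined partial order on $\mathcal{O}(H)$, was already settled when $\unlhd_H^*$ was introduced, so I may take it for granted.

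To establish linearity I would take two arbitrary open components $E_1,E_2 \in \mathcal{O}(H)$ and pick representatives $x_1 \in E_1$ and $x_2 \in E_2$. Since $x_1,x_2 \in H$ and, by the preceding proposition, the relation $\unlhd_H$ is total on $H$, either $x_1 \unlhd_H x_2$ or $x_2 \unlhd_H x_1$. By the definition of $\unlhd_H^*$ the first alternative gives $E_1 \unlhd_H^* E_2$ and the second gives $E_2 \unlhd_H^* E_1$. Thus any two elements of $\mathcal{O}(H)$ are comparable, which is precisely the assertion that $\unlhd_H^*$ is a linear order on $\mathcal{O}(H)$.

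There is effectively no obstacle here: the only subtle point is that comparing $E_1$ and $E_2$ through arbitrary representatives must not depend on the choice, but this is exactly the well-definedness of $\unlhd_H^*$ as a partial order, already available from the construction of the quotient. Consequently the corollary follows in one step from the totality proved in the proposition.
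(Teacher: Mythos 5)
Your argument is correct: since $\unlhd_H$ is total on $H$ by the preceding proposition and $\unlhd_H^*$ is the well-defined factor order on $\mathcal{O}(H)=H/\eq_H$, any two components are comparable, and this is all the corollary asserts. The paper takes a slightly different route: rather than citing the proposition, it re-runs the trichotomy for a conical halfspace --- for $x\in E_1$, $y\in E_2$ with $E_1\ne E_2$ exactly one of $x-y\in H\setminus L_H$, $y-x\in H\setminus L_H$, $x-y\in L_H$ holds, the last being excluded because via \eqref{e3.4} it would force $x\eq_H y$ --- and thereby obtains directly the strict comparability $E_1\lhd_H^* E_2$ or $E_2\lhd_H^* E_1$ for distinct components. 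Your version reaches the same strict conclusion for free, since $\unlhd_H^*$ is a partial order (hence antisymmetric) on the quotient, so comparability of distinct classes is automatically strict. In short, yours is the abstract ``total preorder induces a linear order on the quotient'' argument and is more economical; the paper's is self-contained at the level of the halfspace decomposition and makes the strict alternative explicit. Either is acceptable.
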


\begin{proof}
Let $E_1,E_2 \in \mathcal{O}(H)$ be such that $E_1 \ne E_2$. For $x \in E_1,y \in E_2$ the alternative $x - y \in L_H$ is impossible because in this case, through \eqref{e3.4}, we would have $x \eq_H y$, which contradicts $E_1 \ne E_2$. Thus, we have either $y - x \in H \setminus L_H$ or $x - y \in H \setminus L_H$ and, consequently, again through \eqref{e3.4}, either $y \unlhd_H x$ or $x \unlhd_H y$. Since $x \in E_1, y \in E_2$  and $E_1 \ne E_2$, we actually have either $y \lhd_H x$ or $x \lhd_H y$. It implies that either $E_1 \lhd_H^* E_2$ or $E_2 \lhd_H^* E_1$.
\end{proof}

\begin{proposition}\label{pr4.1a}
Let $H$ be a conical halfspace in a real vector space $X$. Then for any $x,y \in H$ the following implication
$$
y \lhd_H x \Longrightarrow x - y \in H \setminus L_H.
$$
holds.
\end{proposition}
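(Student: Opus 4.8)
The plan is to reduce the whole statement to a single application of the trichotomy supplied by Lemma~\ref{lem1.1} for an asymmetric conical halfspace. First I would record that the conclusion $x-y\in H\setminus L_H$ is precisely the assertion $x-y\in\widehat H$, where $\widehat H:=H\setminus L_H$ is the asymmetric part of $H$. Indeed, when $H$ is asymmetric one has $H\bigcap L_H=\varnothing$, so $\widehat H=H=H\setminus L_H$; and when $0\in H$ one has $L_H=H\bigcap(-H)$, so again $H\setminus L_H=\widehat H$. In both cases $\widehat H$ is an \emph{asymmetric} conical halfspace in $X$ with $L_{\widehat H}=L_H$: for asymmetric $H$ this is immediate, while for $0\in H$ it was already observed in the proof of the preceding proposition that $\widehat H=-(X\setminus H)$ is an asymmetric conical halfspace, and then $L_{\widehat H}=L_H$ follows by comparing the two partitions $X=(X\setminus H)\sqcup L_{\widehat H}\sqcup\widehat H$ (Lemma~\ref{lem1.1} applied to $\widehat H$) and $X=(X\setminus H)\sqcup L_H\sqcup\widehat H$.

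Granting this, I would apply Lemma~\ref{lem1.1} to the asymmetric conical halfspace $\widehat H$, obtaining the disjoint decomposition $X=(-\widehat H)\sqcup L_{\widehat H}\sqcup\widehat H$ with $L_{\widehat H}=L_H$. Hence the vector $x-y\in X$ lies in exactly one of three classes: (a) $x-y\in\widehat H$; (b) $x-y\in L_H$; or (c) $y-x\in\widehat H$. The goal is then to exclude (b) and (c) by means of the hypothesis $y\lhd_H x$, that is, $y\unlhd_H x$ together with the fact that $x\unlhd_H y$ does not hold.

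In case (b), the implication $x-y\in L_H\Rightarrow x\eq_H y$ from \eqref{e3.4} yields $x\eq_H y$, hence $x\unlhd_H y$, contradicting $y\lhd_H x$. In case (c), since $\widehat H\subseteq H$ we get $y-x\in H$, and the implication $x-y\in H\Rightarrow y\unlhd_H x$ of \eqref{e3.4}, applied with the roles of $x$ and $y$ interchanged, gives $x\unlhd_H y$, again contradicting $y\lhd_H x$. Therefore only alternative (a) survives, i.e. $x-y\in\widehat H=H\setminus L_H$, which is exactly what is claimed.

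The logical core is a three-line case analysis on the trichotomy combined with \eqref{e3.4}, so the only genuine work lies in the preparatory reduction: checking that $\widehat H$ is an asymmetric conical halfspace and, above all, that $L_{\widehat H}=L_H$ in the non-asymmetric case. This is precisely where the hypothesis that $H$ is a conical halfspace (rather than an arbitrary convex cone) enters essentially, through Lemma~\ref{lem1.1}, since it is this lemma that guarantees the clean disjoint partition of $X$ on which the argument rests. I expect this bookkeeping around $\widehat H$ and $L_{\widehat H}$ to be the main, though routine, obstacle; everything after it is immediate.
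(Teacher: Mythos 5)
Your proof is correct and follows essentially the same route as the paper: the trichotomy $x-y\in H\setminus L_H$, $y-x\in H\setminus L_H$, or $x-y\in L_H$, with the latter two alternatives excluded via the implications \eqref{e3.4}. The only difference is that you carefully justify the trichotomy (via the asymmetric part $\widehat H$ and Lemma~\ref{lem1.1}, including the verification that $L_{\widehat H}=L_H$ for a non-asymmetric conical halfspace), whereas the paper simply asserts it; your bookkeeping is sound.
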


\begin{proof}
Let $x,y \in H$ and $y \lhd_H x$.
Since $H \subset X$ is a conical halfspace, for every pair $x,y \in X$ exactly one of the following three alternatives
$x - y \in H \setminus L_H,\,\,y - x \in H \setminus L_H$ or $x-y \in L_H,$ holds.
But, through the implications \eqref{e3.4}, both alternatives $y - x \in H \setminus L_H$ and $x-y \in L_H,$
are impossible, since they imply respectively $x \unlhd_H y$ and $x \eq_H y$ which contradict the condition $y \lhd_H x$.
\end{proof}

\begin{proposition}\label{pr4.2a}
For every open component $E \in {\mathcal O}(H)$ of a conical halfspace $H \subset X$ the inclusions
$$
L_H \subset L_E \subset {\rm Lin}(E)
$$
hold
\end{proposition}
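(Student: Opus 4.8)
The plan is to treat the two inclusions separately, since they are of quite different character. The inclusion $L_E \subset {\rm Lin}(E)$ requires essentially no work: by Proposition~\ref{pr3.6} the open component $E$ is a convex cone, and the introduction already records that $L_K \subset {\rm Lin}(K)$ for every convex cone $K$. Applying this to $K = E$, one fixes any $x \in E$, uses $x + L_E \subset E$ (the defining property of $L_E$), and concludes $L_E = (x + L_E) - x \subset E - E = {\rm Lin}(E)$.

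The content of the proposition lies in the inclusion $L_H \subset L_E$. I would take an arbitrary $h \in L_H$ and verify the defining property of $L_E$ directly, namely that $x + th \in E$ for every $x \in E$ and every $t \in \mathbb{R}$. Fix such an $x$ and $t$. Since $x \in E \subset H$ and $h \in L_H$, the definition of $L_H$ gives $x + th \in H$. Moreover, because $L_H$ is a vector subspace, $(x + th) - x = th \in L_H$.

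The decisive step is then to apply the second implication of \eqref{e3.4} with $K = H$: from $(x + th) - x \in L_H$ it follows that $x + th \eq_H x$, so $x + th$ belongs to the same open component as $x$, that is, $x + th \in E_x = E$. As $x \in E$ and $t \in \mathbb{R}$ were arbitrary, this is precisely $h \in L_E$, which yields $L_H \subset L_E$ and, together with the first paragraph, the full chain $L_H \subset L_E \subset {\rm Lin}(E)$. I expect no genuine obstacle here: the argument rests only on the definitions of $L_H$ and $L_E$, the subspace property of $L_H$, and the already-proved implication \eqref{e3.4}. In fact the halfspace hypothesis is never invoked, so the same reasoning gives $L_K \subset L_E \subset {\rm Lin}(E)$ for an arbitrary convex cone $K$ and any $E \in \mathcal{O}(K)$; the statement is phrased for conical halfspaces only because that is the running setting of the section.
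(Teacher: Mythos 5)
Your proof is correct, and it takes a genuinely different route from the paper's. The paper argues by contradiction: assuming $x+th$ lands in some other component $\tilde{E}\ne E$, it invokes the linear ordering of $\mathcal{O}(H)$ to get $\tilde{E}\lhd_H^* E$ or $E\lhd_H^*\tilde{E}$, and then applies Proposition~\ref{pr4.1a} to conclude $\pm th\in H\setminus L_H$, contradicting $h\in L_H$. Both of those ingredients are specific to conical halfspaces. You instead verify the defining property of $L_E$ directly: $x+th\in H$ by the definition of $L_H$, $th\in L_H$ because $L_H$ is a subspace, and then the second implication of \eqref{e3.4} (with $K=H$, applied to the two points $x$ and $x+th$ of $H$) gives $x+th\eq_H x$, hence $x+th\in E_x=E$. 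Every step checks out, and your closing observation is right: since \eqref{e3.4} holds for an arbitrary convex cone, your argument proves the stronger statement $L_K\subset L_E\subset\mathrm{Lin}(E)$ for every convex cone $K$ and every $E\in\mathcal{O}(K)$, whereas the paper's argument only covers halfspaces. What the paper's proof buys in exchange is essentially nothing here --- it is longer and less general --- though it does illustrate how Proposition~\ref{pr4.1a} and the total order on $\mathcal{O}(H)$ are used, tools that the paper needs again later in Section~\ref{sec4}.
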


\begin{proof}
It was shown in Section \ref{sec1} that $L_K \subset {\rm Lin}(K)$ for any convex cone $K$. Thus, $L_E \subset {\rm Lin}(E)$.

Now, let $h \in L_H$. By the definition of $L_H$, we have
$$
x+th \in H\,\,\text{for all}\,\,x \in H\,\,\text{and all}\,\,t \in {\mathbb{R}}.
$$
Evidently,
$$
x+th \in H\,\,\text{for all}\,\,x \in E\,\,\text{and all}\,\,t \in {\mathbb{R}}.
$$
To prove the inclusion $L_H \subset L_E$ we need to show that actually
\begin{equation}\label{eA5}
x+th \in E\,\,\text{for all}\,\,x \in E\,\,\text{and all}\,\,t \in {\mathbb{R}}.
\end{equation}
To justify this, we suppose that \eqref{eA5} is not true. Then there exist $y \in E$ and $\lambda \in {\mathbb{R}}$ such that $y+\lambda h  \in \tilde{E}$ where $\tilde{E} \in {\mathcal O}(H),\,\tilde{E} \ne E$. Since ${\mathcal O}(H)$ is linearly ordered by $\unlhd_H^*$, we have either $\tilde{E} \lhd_H^* E$ or $E \lhd_H^* \tilde{E}$. Assume that $\tilde{E} \lhd_H^* E$, then $y+{\lambda}h \lhd_H y$ and, through Proposition~\ref{pr4.1a},  $y - (y +{\lambda}h)= -{\lambda}h \in H \setminus L_H$. But  this contradicts $h \in L_H$. In the case $E \lhd_H^* \tilde{E}$ we would have ${y} \lhd_H {y}+{\lambda}h$ and then ${y} + {\lambda}h - y = \lambda h \in H \setminus L_H$, that again contradicts $h \in L_H$. Thus, any $h \in L_H$ satisfies \eqref{eA5} and, hence, $L_H \subset L_E$.
\end{proof}

In what follows we will mainly deal with asymmetric conical halfspaces, but this does not restrict the generality. Indeed, each non-asymmetric conical halfspace $H$ can be presented as $H= L_H \bigsqcup \widehat{H}$, where $\widehat{H} :=H \setminus (-H)$ is the asymmetric part of $H$, which is an asymmetric conical halfspace, and $L_{\widehat{H}} = L_H$. Moreover, the restriction of $\unlhd_H$ on $\widehat{H}$ coincides with $\unlhd_{\widehat{H}}$ and, since $y \lhd_H x$ for all $y \in L_H$ and all $x \in \widehat{H}$, the equality ${\mathcal O}(\widehat{H}) = {\mathcal O}(H) \setminus \{L_H\}$ holds. We see from this that any assertion related to open components of an asymmetric conical halfspace can be extended to open components of a conical halfspace which is not asymmetric.

\smallskip

For the proof of the next theorem we need the following counterpart of Lemma~\ref{lem1.1}.

Let $Y$ be a vector subspace in a vector space $X$. We say that a subset $H \subset Y$ is  a \textit{conical halfspace in $Y$} if both $H$ and $Y \setminus H$ are convex cones.

\smallskip

\begin{lemma}\label{lem4.1}
An asymmetric convex cone $H \subset Y$ is a conical halfspace in $Y$ if and only if the set $Y \setminus (H\bigcup(-H))$ is a vector subspace. Moreover, the vector subspace $L_H$ associated with  an asymmetric conical halfspace $H$ in $Y$ coincides with the vector subspace $Y \setminus (H\bigcup(-H))$.
\end{lemma}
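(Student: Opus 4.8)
The plan is to obtain the statement as a transcription of Lemma~\ref{lem1.1} with the subspace $Y$ playing the role of the ambient space. Since $Y$ is a vector subspace of $X$ it is itself a real vector space; we may assume $H \neq \varnothing$ (the empty cone being degenerate), and then $Y$ is nontrivial because it contains the vectors of $H$, each of which is nonzero as $0 \notin H$. The notion of a conical halfspace in $Y$ introduced immediately before the lemma — both $H$ and $Y \setminus H$ are convex cones — is exactly the notion of conical halfspace that Lemma~\ref{lem1.1} addresses, now internal to $Y$. Hence, reading Lemma~\ref{lem1.1} verbatim over $Y$ delivers both assertions simultaneously: an asymmetric convex cone $H \subset Y$ is a conical halfspace in $Y$ if and only if $Y \setminus (H \cup (-H))$ is a vector subspace, and in that case the vector subspace associated with $H$ equals $Y \setminus (H \cup (-H))$.

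The single point that must be verified to make this reduction legitimate is that $L_H$, which the paper defines with the vector $h$ ranging over the \emph{whole} ambient space $X$, coincides with the associated subspace computed \emph{within} $Y$, where $h$ ranges only over $Y$. First I would note that for nonempty $H \subseteq Y$ and any $h \in L_H$, fixing some $x \in H$ gives $x + h \in H \subseteq Y$, so $h = (x + h) - x \in Y$ because $Y$ is a subspace; thus $L_H \subseteq Y$ automatically. Conversely, the defining requirement ``$x + th \in H$ for all $x \in H$ and all $t \in \mathbb{R}$'' constrains $h$ only through points of $H \subseteq Y$ and is unchanged whether $h$ is taken from $X$ or from $Y$. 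Therefore $L_H$ is intrinsic to $Y$ and agrees with the associated subspace produced by Lemma~\ref{lem1.1} over $Y$, so the $L_H$ of the present statement is the correct object.

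I do not expect a genuine obstacle: the whole content is carried by Lemma~\ref{lem1.1}, and the only care needed is the bookkeeping above together with the disjoint decomposition $Y = H \sqcup (-H) \sqcup (Y \setminus (H \cup (-H)))$, which is immediate from asymmetry ($H \cap (-H) = \varnothing$). If instead a self-contained argument were wanted, the substantive direction is ``if'': assuming $L := Y \setminus (H \cup (-H))$ is a subspace, one checks that $Y \setminus H = (-H) \cup L$ is a convex cone by testing closure under addition on the three cases of summands drawn from $-H$ and $L$; the cases with a summand in $-H$ use only $H + H \subseteq H$, while the case of two summands from $L$ is exactly where the subspace hypothesis on $L$ is needed to keep the sum out of $H$.
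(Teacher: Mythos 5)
Your proposal is correct and matches the paper's (implicit) intent exactly: the paper states Lemma~\ref{lem4.1} without proof, presenting it simply as the counterpart of Lemma~\ref{lem1.1} read over the subspace $Y$, which is precisely your reduction. The one point genuinely requiring verification --- that $L_H$, defined with $h$ ranging over all of $X$, is automatically contained in $Y$ and hence coincides with the associated subspace computed internally to $Y$ --- is supplied correctly in your second paragraph, so nothing is missing.
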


\smallskip

\begin{theorem}
Let $H \subset X$ be an asymmetric conical halfspace in a real vector space $X$ and let ${\mathcal{O}}(H)$ be the family of open components of $H$.
Then the following assertions hold:

$($i$)$ for each open component $E \in {\mathcal{O}}(H)$ the minimal face of $H$ containing $E$, that is the set $F_H(E)$, is a conical halfspace in ${\rm Lin}(E)$ and $L_{F_H(E)} = L_H$;

$($ii$)$ for each open component $E \in {\mathcal{O}}(H)$ which is not the least element of ${\mathcal{O}}(H)$ the face $\widehat{F}_H(E)$ corresponding to $E$ is a conical halfspace in the vector subspace ${\rm Lin}(\widehat{F}_H(E))$ and $L_{\widehat{F}_H(E))} = L_H$;

$($iii$)$ each open component $E \in {\mathcal{O}}(H )$ is a conical halfspaces in the vector subspace ${\rm Lin}(E)$ and
\begin{equation}\label{e4.11}
L_E = {\rm Lin}(\widehat{F}_H(E)) = (-\widehat{F}_H(E))\cup L_H \cup \widehat{F}_H(E);
\end{equation}

\end{theorem}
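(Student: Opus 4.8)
The plan is to derive all three assertions from the criterion in Lemma~\ref{lem4.1}: an asymmetric convex cone $C$ contained in a subspace $Y$ is a conical halfspace in $Y$ if and only if $Y\setminus(C\cup(-C))$ is a vector subspace, in which case that subspace is exactly $L_C$. For each of the cones $F_H(E)$, $\widehat F_H(E)$ and $E$ I would take $Y$ to be its linear hull, show that the cone coincides with $Y\cap H$ (so that $-C=Y\cap(-H)$), and then compute
$$
Y\setminus\bigl(C\cup(-C)\bigr)=Y\cap\bigl(X\setminus(H\cup(-H))\bigr)=Y\cap L_H ,
$$
using Lemma~\ref{lem1.1} for the middle equality. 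Since each such $Y$ will be shown to contain $L_H$, this intersection equals $L_H$, a subspace, and Lemma~\ref{lem4.1} then yields both the conical-halfspace property and the identity $L_C=L_H$.

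For (i) this is almost immediate. By Proposition~\ref{pr3.7} we have $F_H(E)={\rm Lin}(E)\cap H$, so with $Y:={\rm Lin}(E)$, which contains $L_H$ by Proposition~\ref{pr4.2a}, the cone $C:=F_H(E)$ satisfies $C=Y\cap H$ and $-C=Y\cap(-H)$. The displayed computation gives $Y\setminus(C\cup(-C))=L_H$, and since $F_H(E)\subseteq H$ is asymmetric, Lemma~\ref{lem4.1} shows it is a conical halfspace in ${\rm Lin}(E)$ with $L_{F_H(E)}=L_H$.

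The step (ii) is where the real work lies, because Proposition~\ref{pr3.7} describes $F_H(E)$, not $\widehat F_H(E)$, so I must first prove the analogue $\widehat F_H(E)={\rm Lin}(\widehat F_H(E))\cap H$. The inclusion $\subseteq$ is trivial. For $\supseteq$, write $Y':={\rm Lin}(\widehat F_H(E))$ and take $z\in Y'\cap H$; since $Y'=\widehat F_H(E)-\widehat F_H(E)$ we have $z=u-v$ with $u,v\in\widehat F_H(E)$. Using that $\mathcal O(H)$ is linearly ordered, if $E_z$ were not strictly below $E$ then $E_u\lhd_H^*E\unlhd_H^*E_z$, whence $u\lhd_H z$; Proposition~\ref{pr4.1a} would then force $z-u=-v\in H\setminus L_H\subseteq H$, contradicting $-v\in -H$ and the asymmetry $H\cap(-H)=\varnothing$. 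Hence $E_z\lhd_H^*E$, i.e.\ $z\in\widehat F_H(E)$. With this identity in hand the same computation as before gives $Y'\setminus(\widehat F_H(E)\cup(-\widehat F_H(E)))=Y'\cap L_H$; and $L_H\subseteq Y'$ because for any $w\in\widehat F_H(E)$ and $h\in L_H$ Proposition~\ref{pr4.2a}, applied to the component $E_w\lhd_H^*E$, gives $w\pm h\in E_w\subseteq\widehat F_H(E)$, so $h=\tfrac12\bigl((w+h)-(w-h)\bigr)\in Y'$. Lemma~\ref{lem4.1} then yields assertion (ii).

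Finally, (iii) follows by assembling (i) and (ii). From the conical-halfspace decomposition of ${\rm Lin}(E)$ provided by (i) together with $F_H(E)=E\sqcup\widehat F_H(E)$ one obtains the disjoint decomposition
$$
{\rm Lin}(E)=(-E)\sqcup(-\widehat F_H(E))\sqcup L_H\sqcup\widehat F_H(E)\sqcup E ,
$$
so that ${\rm Lin}(E)\setminus(E\cup(-E))=(-\widehat F_H(E))\cup L_H\cup\widehat F_H(E)$. By (ii) and the decomposition in Lemma~\ref{lem4.1} this last set is precisely ${\rm Lin}(\widehat F_H(E))$, hence a vector subspace. Since $E\subseteq H$ is an asymmetric convex cone with ${\rm Lin}(E)$ as its linear hull, Lemma~\ref{lem4.1} shows $E$ is a conical halfspace in ${\rm Lin}(E)$ and yields $L_E={\rm Lin}(E)\setminus(E\cup(-E))=(-\widehat F_H(E))\cup L_H\cup\widehat F_H(E)={\rm Lin}(\widehat F_H(E))$, which is \eqref{e4.11}. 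The main obstacle is the identity in (ii); the only other point needing care is the degenerate case where $E$ is the least element of $\mathcal O(H)$, in which $\widehat F_H(E)=\varnothing$ and $F_H(E)=E$, so that (iii) reduces directly to (i) with $L_E=L_H$.
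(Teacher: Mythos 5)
Your proof is correct, and its overall skeleton coincides with the paper's: all three parts are reduced to Lemma~\ref{lem4.1} by exhibiting the complement of $C\cup(-C)$ inside the relevant linear hull as the subspace $L_H$. Part (i) is essentially the paper's argument (decompose ${\rm Lin}(E)$ against $X=(-H)\sqcup L_H\sqcup H$ and invoke Propositions~\ref{pr3.7} and~\ref{pr4.2a}), and part (iii) is assembled from (i) and (ii) in the same way. The genuine divergence is in (ii). The paper works ``from above'': it observes that $\{{\rm Lin}(\tilde E)\mid \tilde E\lhd_H^*E\}$ is a nested family of subspaces, identifies ${\rm Lin}(\widehat F_H(E))$ with its union (equality \eqref{e4.10b}), and then obtains the three-part decomposition of ${\rm Lin}(\widehat F_H(E))$ by taking the union of the decompositions already established in (i) for each ${\rm Lin}(\tilde E)$; the inclusion $L_H\subset{\rm Lin}(\widehat F_H(E))$ then comes for free. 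You instead argue pointwise, proving the identity $\widehat F_H(E)={\rm Lin}(\widehat F_H(E))\cap H$ directly: any $z=u-v$ in ${\rm Lin}(\widehat F_H(E))\cap H$ with $E\unlhd_H^*E_z$ would give $u\lhd_H z$ and hence, by Proposition~\ref{pr4.1a}, $-v=z-u\in H$, contradicting asymmetry of $H$. This is a clean alternative that trades the structural nested-union observation for a short order-theoretic separation argument, at the cost of having to verify $L_H\subseteq{\rm Lin}(\widehat F_H(E))$ separately, which you do correctly via Proposition~\ref{pr4.2a} applied to a component $E_w$ with $w\in\widehat F_H(E)$. Your explicit treatment of the degenerate case where $E$ is the least element of $\mathcal O(H)$ (so $\widehat F_H(E)=\varnothing$ and \eqref{e4.11} collapses to $L_E=L_H$) is a point the paper leaves implicit.
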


\begin{proof}
($i$) First, we recall that due to Theorem \ref{th3.13} for each $E \in {\mathcal O}(H)$ the set  $F_H(E)$ is the minimal face of $H$ containing $E$ and hence $F_H(E)$ is an asymmetric convex cone. Further, since $H$ is an asymmetric conical halfspace, we have $X = (-H)\bigsqcup L_H \bigsqcup H$. Consequently, ${\rm Lin}(E) = {\rm Lin}(E)\bigcap X =({\rm Lin}(E)\bigcap (-H))\bigsqcup ({\rm Lin}(E)\bigcap L_H) \bigsqcup {\rm Lin}(E)\bigcap H)$. Propositions \ref{pr3.7} and \ref{pr4.2a} give us respectively that ${\rm Lin}(E)\bigcap H = F_H(E)$ and ${\rm Lin}(E)\bigcap L_H =L_H$ and, consequently,
\begin{equation}\label{e4.10a}
{\rm Lin}(E) = (-F_H(E))\bigsqcup L_H \bigsqcup F_H(E).
\end{equation}
Due to Lemma \ref{lem4.1} this equality implies that $F_H(E)$ is an asymmetric conical halfspace in the vector subspace ${\rm Lin}(E)$ and $L_{F_H(E)} =L_H$.

($ii$) Since $E$ is not the least element of the linearly ordered set $({\mathcal{O}}(H), \unlhd_H^*)$, the family $\{\tilde{E} \in {\mathcal{O}}(H) \mid \tilde{E} \lhd_H^* E\}$ is nonempty and, consequently, the cone $\widehat{F}_H(E)$ is also nonempty.
Note that the nonempty family of vector subspaces $\{{\rm Lin}(\tilde{E}) \mid \tilde{E} \in {\mathcal{O}}(H), \tilde{E} \lhd_H^* E\}$ is linearly ordered by inclusion and hence $\bigcup\{{\rm Lin}(\tilde{E}) \mid \tilde{E} \in {\mathcal{O}}(H), \tilde{E} \lhd_H^* E\}$ is also a vector subspace. Furthermore, since $\tilde{E} \subset \widehat{F}_H(E)$  and consequently ${\rm Lin}(\tilde{E}) \subset {\rm Lin}(\widehat{F}_H(E))$ for every $\tilde{E} \in {\mathcal{O}}(H), \tilde{E} \lhd_H^* E$, we have $\bigcup \{{\rm Lin}(\tilde{E})\mid \tilde{E} \in {\mathcal{O}}(H)\mid \tilde{E} \lhd_H^* E\} \subset {\rm Lin}(\widehat{F}_H(E))$. On the other hand, the converse inclusion follows from $\widehat{F}_H(E) \subset \bigcup \{{\rm Lin}(\tilde{E})\mid \tilde{E} \in {\mathcal{O}}(H)\mid \tilde{E} \lhd^* E\}$ and thus we have
\begin{equation}\label{e4.10b}
{\rm Lin}(\widehat{F}_H(E)) = \bigcup \{{\rm Lin}(\tilde{E})\mid \tilde{E} \in {\mathcal{O}}(H), \tilde{E} \lhd_H^* E\}.
\end{equation}
Further, from the assertion ($i$) proved above, using the equality \eqref{e4.10a}, we obtain ${\rm Lin}(\widehat{F}_H(E)) = \bigcup \{(-F_H(\tilde{E}))\bigsqcup L_H \bigsqcup F_H(\tilde{E})\mid \tilde{E} \in {\mathcal{O}}(H), \tilde{E} \lhd_H^* E\} = (-\widehat{F}_H(E))\bigsqcup L_H \bigsqcup \widehat{F}_H(E)$ and hence, due to Lemma \ref{lem4.1}, $\widehat{F}_H(E)$ is an asymmetric conical halfspace in the vector subspace ${\rm Lin}(\widehat{F}_H(E))$, and $L_{\widehat{F}_H(E)} = L_H$.

($iii$) Since $F_H(E) = E\bigsqcup \widehat{F}_H(E)$, it immediately follows from the equalities \eqref{e4.10a} and \eqref{e4.10b} that
 ${\rm Lin}(E) = (-E)\bigsqcup {\rm Lin}(\widehat{F}_H(E)) \bigsqcup E$. The latter equality shows, due to Lemma~\ref{lem4.1}, that $E$ is a conical halfspace in the vector space ${\rm Lin}(E)$, and $L_E = {\rm Lin}(\widehat{F}_H(E))$.
\end{proof}

\begin{remark}
{\rm It follows from \eqref{e4.11} and \eqref{e4.10b} that
\begin{equation}\label{e4.17}
L_E = \bigcup\{{\rm Lin}(\tilde{E}) \mid \tilde{E} \in {\mathcal O}(H), \tilde{E} \lhd_H^*E\}
\end{equation}
for each $E \in {\mathcal O}(H)$.

}
\end{remark}

\smallskip

We conclude this section with the following theorem.

\begin{theorem}
Let $H \subset X$ be a conical halfspace, and let ${\mathcal O}(H)$ be the family of open components of $H$ linearly ordered by the relation $\unlhd^*_H$. Then
the family of vector subspaces $\{{\rm Lin}(E) \mid E \in {\mathcal{O}}(H)\}$ is linearly ordered by inclusion and is order isomorphic to $({\mathcal{O}}(H),\unlhd_H^*)$; the latter means that  for any $E_1,E_2 \in {\mathcal{O}}(H)$ the inclusion ${\rm Lin}(E_1) \subsetneq {\rm Lin}(E_2)$ holds if and only if $E_1 \lhd_H^* E_2$. In addition,
\begin{equation}\label{e4.17a}
X =\bigcup \{{\rm Lin}(E)\mid E \in {\mathcal{O}}(H)\}.
\end{equation}
\end{theorem}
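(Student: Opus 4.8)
The plan is to reduce everything to the single equivalence
$$
E_1 \lhd_H^* E_2 \iff {\rm Lin}(E_1) \subsetneq {\rm Lin}(E_2), \qquad E_1,E_2 \in {\mathcal O}(H),
$$
from which the remaining assertions follow almost formally. Since every claim concerns only the comparison of linear hulls of components, and by the reduction $H = L_H \bigsqcup \widehat{H}$ with ${\mathcal O}(\widehat{H}) = {\mathcal O}(H)\setminus\{L_H\}$ and $L_{\widehat H} = L_H$, I would first assume $H$ asymmetric and then transfer the result, noting that $L_H$ (when it is a component) is the least element of $({\mathcal O}(H),\unlhd_H^*)$ and satisfies ${\rm Lin}(L_H) = L_H \subset L_E \subset {\rm Lin}(E)$ for every $E$ by Proposition~\ref{pr4.2a}, so the least-element case is subsumed.

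For the forward implication, suppose $E_1 \lhd_H^* E_2$. Then $E_2$ is not the least element, hence $E_2$ is an asymmetric component, and by \eqref{e4.17} (applied to the asymmetric part) together with Proposition~\ref{pr4.2a} one gets ${\rm Lin}(E_1) \subset L_{E_2} \subset {\rm Lin}(E_2)$. The point is that this inclusion is \emph{strict}: since assertion (iii) of the previous theorem makes $E_2$ an asymmetric conical halfspace in ${\rm Lin}(E_2)$, Lemma~\ref{lem4.1} gives ${\rm Lin}(E_2) = (-E_2)\bigsqcup L_{E_2}\bigsqcup E_2$ with $E_2 \ne \varnothing$, whence $L_{E_2}\subsetneq {\rm Lin}(E_2)$ and therefore ${\rm Lin}(E_1)\subsetneq {\rm Lin}(E_2)$. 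This is the step I expect to be the crux: the plain inclusion ${\rm Lin}(E_1)\subset {\rm Lin}(E_2)$ is easy and insufficient, and the properness rests precisely on the fact that the \emph{entire} linear hull of a smaller component is absorbed into the \emph{lineality space} $L_{E_2}$ of the larger one, which is exactly what \eqref{e4.17} and the halfspace structure (iii) supply.

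The reverse implication and the order isomorphism then follow from the linearity of $\unlhd_H^*$ (the Corollary that ${\mathcal O}(H)$ is linearly ordered). If ${\rm Lin}(E_1)\subsetneq {\rm Lin}(E_2)$, then $E_1 \ne E_2$, and the only other alternative $E_2 \lhd_H^* E_1$ would yield ${\rm Lin}(E_2)\subsetneq {\rm Lin}(E_1)$ by the forward part, a contradiction; hence $E_1 \lhd_H^* E_2$. The same reasoning shows $E \mapsto {\rm Lin}(E)$ is injective (distinct components are $\unlhd_H^*$-comparable, so their hulls are strictly comparable and in particular distinct) and that $\{{\rm Lin}(E)\mid E\in{\mathcal O}(H)\}$ is linearly ordered by inclusion. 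Thus $E \mapsto {\rm Lin}(E)$ is a bijection onto its image carrying $\lhd_H^*$ to $\subsetneq$, i.e. an order isomorphism of $({\mathcal O}(H),\unlhd_H^*)$ onto $(\{{\rm Lin}(E)\},\subseteq)$.

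Finally, for \eqref{e4.17a} I would check that $\bigcup\{{\rm Lin}(E)\mid E\in{\mathcal O}(H)\}$ contains each of the three pieces of the decomposition $X = (-H)\bigcup L_H \bigcup H$ furnished by Lemma~\ref{lem1.1}. Every $x\in H$ lies in some component $E\subset {\rm Lin}(E)$ by the disjoint-union decomposition of $H$; since each ${\rm Lin}(E)$ is a vector subspace we have ${\rm Lin}(E) = -{\rm Lin}(E)$, which settles $x \in -H$; and $L_H \subset {\rm Lin}(E)$ for every $E$ by Proposition~\ref{pr4.2a}. Hence $X = \bigcup\{{\rm Lin}(E)\mid E\in {\mathcal O}(H)\}$, which completes the proof.
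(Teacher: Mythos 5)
Your proof is correct, and it follows the same high-level strategy as the paper --- both treat the theorem as a quick corollary of the structure theorem for asymmetric conical halfspaces proved just before it in Section~\ref{sec4}, after the standard reduction $H = L_H \bigsqcup \widehat{H}$ --- but the two proofs lean on different parts of that theorem at the crux. The paper observes that $E_1 \lhd_H^* E_2$ gives $F_H(E_1) \subsetneq F_H(E_2)$ (immediate from the definition of $F_H(E)$ as a union of open components) and then reads off both the inclusion ${\rm Lin}(E_1) \subset {\rm Lin}(E_2)$ and its strictness from the disjoint decomposition \eqref{e4.10a}, ${\rm Lin}(E) = (-F_H(E)) \bigsqcup L_H \bigsqcup F_H(E)$; for \eqref{e4.17a} it notes that each $x \in X$ lies in $L_H$ or in $F_H(E) \bigcup (-F_H(E))$ for some $E$. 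You instead use \eqref{e4.17} to get the stronger absorption ${\rm Lin}(E_1) \subset L_{E_2}$, and extract strictness from assertion (iii) of that theorem plus Lemma~\ref{lem4.1}: $L_{E_2} \subsetneq {\rm Lin}(E_2)$ because $E_2$ is nonempty and disjoint from its own lineality space inside ${\rm Lin}(E_2)$. Since \eqref{e4.17} is itself derived from \eqref{e4.11} and \eqref{e4.10b}, the two mechanisms are materially equivalent; the paper's is marginally more economical (one equation yields inclusion and properness at once, via disjointness), while yours isolates the geometric reason the inclusion is proper and, usefully, spells out what the paper dismisses as obvious --- the reverse implication via linearity of $\unlhd_H^*$ (distinct components are comparable, so strict comparability of hulls can only go one way), and the piecewise verification of \eqref{e4.17a} over the decomposition $(-H) \bigcup L_H \bigcup H$. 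I see no gaps.
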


\begin{proof}
It follows immediately from the definition of the set $F_H(E)$ that $F_H(E_1) \subsetneq F_H(E_2)$ holds for any $E_1,E_2 \in {\mathcal{O}}(H)$ such that $E_1 \lhd_H^* E_2$. Thus, the assertion we need to prove is an obvious consequence of the equality \eqref{e4.10a}.

To prove the equality \eqref{e4.17} we observe that each $x \in X$ belongs either to $L_H$ or to $F_H(E)\cup (-F_H(E))$ for some $E \in {\mathcal{O}}(H)$.
\end{proof}

\begin{remark}
{\rm As it was noted in Example \ref{ex3.17} the convex cone which was considered in that example is in fact an asymmetric conical halfspace. It is worth observing that this example shows that in each infinite-dimensional vector space there exist conical spaces, whose internal geometric structure differs from their facial structure.
}
\end{remark}

\section{Analytical representation of asymmetric conical halfspaces}\label{sec5}

Let $X$ be a real vector space, $H$ be a conical halfspace in $X$, and $({\mathcal O}(H),\unlhd_H^*)$ be the family of open components of $H$.

We begin with an analytical representation of open components \linebreak $E \in ({\mathcal O}(H),\unlhd_K^*)$ and the vector subspaces $L_E$ and ${\rm Lin}(E)$ generated by $E$.

\begin{proposition}\label{pr4.11}
Let $H \subset X$ be an asymmetric conical halfspace in $X$. For each open component $E \in {\mathcal{O}}(H)$ there is a nonzero linear function $l_E: X \to {\mathbb{R}}$ such that
\begin{equation}\label{e4.18}
E =\{x \in {\rm Lin}(E) \mid l_E(x) > 0\}\,\,\text{and}\,\,
L_E = \{x \in {\rm Lin}(E) \mid l_E(x) = 0\}.
\end{equation}
Furthermore, $l_E(x) = 0$ for all $x \in {\rm Lin}(\tilde{E})$ provided that $\tilde{E} \in {\mathcal O}(H), \tilde{E} \lhd_H^* E$.
\end{proposition}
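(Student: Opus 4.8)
The plan is to build the functional $l_E$ first on the subspace $Y := {\rm Lin}(E)$ and then extend it to $X$ by linearity. The theorem preceding this section, together with \eqref{e4.11}, tells us that $E$ is an asymmetric conical halfspace in $Y$ whose associated subspace is exactly $L_E$, so that $Y = (-E)\,\sqcup\, L_E \,\sqcup\, E$; moreover, by Proposition~\ref{pr3.6}, $E$ is relatively algebraic open, and since the affine hull of a convex cone coincides with its linear hull, $E$ is in fact algebraically open inside $Y$. The idea is to read $l_E$ off the linear order that the halfspace $E$ induces on $Y$: set $u \preceq v \iff v - u \in E \cup L_E$. Because $E \cup L_E$ is a convex cone (here $E+E \subseteq E$, $L_E + E = E$ and $L_E + L_E = L_E$) while $(E\cup L_E)\cap(-(E\cup L_E)) = L_E$ and $(E\cup L_E)\cup(-(E\cup L_E)) = Y$, this makes $Y$ a linearly ordered vector space in which $u\preceq v$ and $v\preceq u$ hold simultaneously exactly when $u-v \in L_E$.

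Next I would fix any $e \in E$ and define
\[
l(u) := \sup\{\, r \in {\mathbb R} \mid u - re \in E \cup L_E \,\},\qquad u \in Y .
\]
First I would check that $e$ is an order unit: for a fixed $u$, algebraic openness of $E$ at $e$ yields $\delta>0$ with $e \pm t u \in E$ for $0<t<\delta$, which bounds $u$ strictly between $-Ne$ and $Ne$ for $N := 2/\delta$; consequently the defining set is nonempty and bounded above, so $l(u)$ is a finite real. Using totality of $\preceq$ (for every $c$ either $ce\preceq u$ or $u\preceq ce$) I would verify that $l(u)$ equals $\inf\{\, s \mid u - se \in (-E)\cup L_E \,\}$ as well, and from these two descriptions deduce that $l$ is additive and positively homogeneous (translation invariance and closure of $E\cup L_E$ under addition), with $l(-u) = -l(u)$; hence $l$ is linear, and $l(e)=1$ so $l \neq 0$.

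The decisive step is identifying the level sets, $E = \{u \in Y \mid l(u) > 0\}$ and $L_E = \{u \in Y \mid l(u) = 0\}$. The inclusion $\{l>0\}\subseteq E$ is formal: if $l(u)>0$ then $u - re \in E\cup L_E$ for some $r>0$, whence $u = (u-re)+re \in E$ since $re\in E$ and $E\cup L_E$ is closed under addition. The reverse inclusion $E \subseteq \{l>0\}$ is where algebraic openness is indispensable, and I expect it to be the main obstacle: one must exclude ``infinitesimal'' positive elements, i.e. vectors $u\in E$ with $0 \prec u \prec re$ for every $r>0$, which is exactly the lexicographic phenomenon that prevents a \emph{general} conical halfspace from being cut by a single functional. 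Algebraic openness removes them: for $u\in E$, moving from $u$ in the direction $-e$ stays in $E$ for a while, so $u-\varepsilon e \in E$ for some $\varepsilon>0$, giving $l(u)\ge \varepsilon>0$; combined with the trichotomy this also forces $\{l=0\}=L_E$.

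Finally I would extend $l$ to a linear functional $l_E$ on all of $X$ by choosing any algebraic complement of $Y$ and letting $l_E$ vanish on it; then $l_E$ is nonzero and \eqref{e4.18} holds verbatim. The closing assertion is then immediate from \eqref{e4.17}: for $\tilde{E} \lhd_H^* E$ we have ${\rm Lin}(\tilde{E}) \subseteq L_E$, and $l_E$ vanishes identically on $L_E$, so $l_E(x)=0$ for all $x \in {\rm Lin}(\tilde{E})$.
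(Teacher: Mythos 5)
Your proof is correct, but it reaches the functional by a genuinely different route than the paper. The paper's proof is a short appeal to an off-the-shelf separation theorem: $E$ and $L_E$ are disjoint convex subsets of ${\rm Lin}(E)$, $E$ is relatively algebraic open by Proposition \ref{pr3.6}, so Corollary 5.62 of \cite{Aliprantis} yields a nonzero linear functional on $X$ properly separating them, and the equalities \eqref{e4.18} are then verified using the trichotomy ${\rm Lin}(E) = (-E)\sqcup L_E \sqcup E$; the final assertion is obtained, exactly as in your argument, from \eqref{e4.17} and the second equality of \eqref{e4.18}. You never invoke separation: you use the trichotomy (i.e., the fact that $E$ is an asymmetric conical halfspace in its linear hull, coming from \eqref{e4.11} and Lemma \ref{lem4.1}) as the engine of an explicit order-unit construction --- $E\cup L_E$ induces a total preorder on $Y={\rm Lin}(E)$, algebraic openness of $E$ in $Y$ makes any $e\in E$ an order unit and excludes infinitesimal positive elements, and $l(u)=\sup\{r\in{\mathbb{R}} \mid u-re\in E\cup L_E\}$ is then finite, linear, and satisfies $\{l>0\}=E$ and $\{l=0\}=L_E$ --- followed by a zero extension to a complement of $Y$ in $X$. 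The trade-off: your construction is self-contained and gives an explicit formula for $l_E$, making visible exactly where relative openness is needed (to defeat the lexicographic phenomenon, which is indeed the crux); the paper's argument is shorter and uses less structure to produce the functional (only disjointness of $E$ and $L_E$ and relative openness of $E$, not the halfspace property), deferring the trichotomy to the verification step. In effect you reprove, in an order-theoretic and elementary way, precisely the special case of the separation theorem that the paper cites.
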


\begin{proof}
Note that $E$ and $L_E$ are convex subsets in ${\rm Lin}(E)$ and $E \cap L_E = \varnothing$. Furthermore, $E$ is relatively algebraic open (see Proposition~\ref{pr3.6}).  By Corollary 5.62 from \cite{Aliprantis} there is a nonzero linear functional $l_E$ defined on $X$ which properly separates $E$ and $L_E$. It is not hard to verify that $l_E$ satisfies the equalities \eqref{e4.18}.

The last assertion follows from the second equality in \eqref{e4.18} and the equality \eqref{e4.17}.
\end{proof}

Thus, every asymmetric conical halfspace $H \subset X$ can be associated with the following family ${\mathcal F}_H$ of nonzero linear functions:
$$
{\mathcal F}_H := \{l_E \in {\mathcal L}(X,{\mathbb{R}}) \mid E \in {\mathcal O}(H), l_E\,\,\text{satisfies}\,\,\eqref{e4.18}\},
$$
where ${\mathcal L}(X,{\mathbb{R}})$ stands for the vector space of linear functions defined on $X$.

Notice that the family ${\mathcal F}_H$ is non uniquely  defined.

We endow
the family ${\mathcal F}_H$ with the linear order $\unlhd'_H$ defined as follows:
$$
l_E \unlhd'_H l_{\tilde{E}} \Longleftrightarrow {\tilde{E}} \unlhd_H^* E.
$$
Clearly $({\mathcal F}_\prec, \unlhd'_H)$ is order anti-isomorphic to $({\mathcal O}(H),\unlhd_H^*)$.

\begin{proposition}
The family ${\mathcal F}_H$ is linearly independent.
\end{proposition}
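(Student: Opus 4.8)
The plan is to reduce the linear independence of the (possibly infinite) family ${\mathcal F}_H$ to that of each of its finite subfamilies and then to exploit the triangular structure that Proposition~\ref{pr4.11} has already built in. Since $({\mathcal O}(H),\unlhd_H^*)$ is linearly ordered, any finite collection of members of ${\mathcal F}_H$ can be indexed as $l_{E_1},\dots,l_{E_n}$ with $E_1 \lhd_H^* E_2 \lhd_H^* \cdots \lhd_H^* E_n$. I would suppose a relation $\sum_{i=1}^{n} c_i l_{E_i} = 0$ with reals $c_1,\dots,c_n$ and prove by induction that every $c_i$ vanishes; since this handles all finite subfamilies, ${\mathcal F}_H$ is linearly independent.

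The key observation I would record first is a pattern of vanishing obtained by choosing, for each $j$, a test vector $x_j \in E_j$. By the first equality in \eqref{e4.18}, the inclusion $x_j \in E_j \subset {\rm Lin}(E_j)$ forces $l_{E_j}(x_j) > 0$. On the other hand, whenever $i > j$ we have $E_j \lhd_H^* E_i$, so the last assertion of Proposition~\ref{pr4.11} gives $l_{E_i} \equiv 0$ on ${\rm Lin}(E_j)$, and in particular $l_{E_i}(x_j) = 0$. Thus the array $\bigl(l_{E_i}(x_j)\bigr)_{i,j}$ is triangular: its diagonal entries $l_{E_j}(x_j)$ are strictly positive, while all entries with $i > j$ vanish. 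The entries with $i < j$ are not controlled by \eqref{e4.18}, but I expect them to be irrelevant to the computation.

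I would then run a forward substitution. Evaluating $\sum_i c_i l_{E_i} = 0$ at $x_1$ kills every term with $i > 1$, leaving $c_1\, l_{E_1}(x_1) = 0$, whence $c_1 = 0$ because $l_{E_1}(x_1) > 0$. Assuming inductively that $c_1 = \cdots = c_{j-1} = 0$, I evaluate at $x_j$: the terms with $i > j$ vanish by triangularity, the terms with $i < j$ vanish by the inductive hypothesis, and what remains is $c_j\, l_{E_j}(x_j) = 0$, so again $c_j = 0$. Hence $c_1 = \cdots = c_n = 0$, as required. The only delicate point to flag is that the uncontrolled ``below-diagonal'' values $l_{E_i}(x_j)$ for $i<j$ are genuinely unknown---indeed $x_j \in E_j$ lies outside ${\rm Lin}(E_i)$ for $i<j$, since ${\rm Lin}(E_i) \subset L_{E_j}$ by \eqref{e4.17} while $E_j \cap L_{E_j} = \varnothing$. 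What rescues the scheme is the order in which the equations are processed: by the time the $j$-th test point is used, the coefficients multiplying the uncontrolled entries have already been shown to vanish, so those entries never enter. I therefore do not anticipate any genuine obstacle beyond keeping the enumeration of the components consistent with the order $\unlhd_H^*$.
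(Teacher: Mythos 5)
Your proof is correct and rests on exactly the same ingredients as the paper's: the positivity of $l_{E_j}$ on $E_j$ from \eqref{e4.18} and the vanishing of $l_{E_i}$ on ${\rm Lin}(E_j)$ for $E_j \lhd_H^* E_i$ from the last assertion of Proposition~\ref{pr4.11}. The paper merely compresses your forward-substitution induction into a single contradiction at the $\unlhd_H^*$-least component of the finite subfamily, so the two arguments are essentially identical.
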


\begin{proof}
Arguing by contradiction, assume that there is a finite subfamily $\{l_{E_1},l_{E_2},\ldots,l_{E_k}\} \subset {\mathcal F}_H$ and nonzero real numbers $\alpha_1,\alpha_2,\ldots,\alpha_k$ such that $\alpha_1l_{E_1}(x)+\alpha_2l_{E_2}(x)+\ldots+\alpha_kl_{E_k}(x)=0$ for all $x \in X$. Without loss of generality we may assume that $l_{E_1} \lhd'_Hl_{E_2} \lhd'_H \ldots \lhd'_H l_{E_k}$. Then $E_k \lhd_H^* E_{k-1} \lhd_H^*\ldots \lhd_H^* E_2 \lhd_H^* E_1$ and hence, through the second assertion of Proposition \ref{pr4.11}, for every $i = 1,2,\ldots,k-1$ the equality $l_{E_i}(x)=0$ holds for all $x \in {\rm Lin}(E_k)$. Since $-\alpha_kl_{E_k}(x)=\alpha_1l_{E_1}(x)+\alpha_2l_{E_2}(x)+\ldots+\alpha_{k-1}l_{E_{k-1}}(x)$ for all $x \in X$, we have $l_{E_k}(x)=0$ for all $x \in {\rm Lin}(E_k)$, but this contradicts $l_{E_k}(x)>0$ for all $x \in E_k$.
\end{proof}

\begin{theorem}\label{th4.161}
Let $H \subset X$ be an asymmetric conical halfspace in $X$ and let ${\mathcal F}_H = \{l_E \mid E \in {\mathcal O}(H)\}$  be a family of linear functions associated with $H$.
For any open component ${E} \in {\mathcal O}(H)$, which is not the greatest element of ${\mathcal O}(H)$, we have
\begin{equation}\label{e4.19}
{\rm Lin}({E})=\{x \in X \mid l_{\tilde{E}}(x)=0\,\,\text{for all}\,\,\tilde{E} \in {\mathcal O}(H)\,\,\text{such that}\,\,{E} \lhd_H^* \tilde{E}\},
\end{equation}
and ${\rm Lin}({E})= X$, when ${E}$ is the greatest element of ${\mathcal O}(H)$.
\end{theorem}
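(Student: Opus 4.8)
The plan is to write $S_E := \{x \in X \mid l_{\tilde E}(x) = 0 \ \text{for all}\ \tilde E \in {\mathcal O}(H)\ \text{with}\ E \lhd_H^* \tilde E\}$ for the set on the right-hand side of \eqref{e4.19}, and to prove the two inclusions ${\rm Lin}(E) \subseteq S_E$ and $S_E \subseteq {\rm Lin}(E)$ separately, disposing of the greatest-element case at the end.

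The inclusion ${\rm Lin}(E) \subseteq S_E$ should follow immediately from the last assertion of Proposition~\ref{pr4.11}: applying that assertion with the roles of $E$ and $\tilde E$ interchanged, whenever $E \lhd_H^* \tilde E$ the functional $l_{\tilde E}$ vanishes on all of ${\rm Lin}(E)$. Hence every $x \in {\rm Lin}(E)$ satisfies $l_{\tilde E}(x) = 0$ for each $\tilde E$ with $E \lhd_H^* \tilde E$, which is precisely membership in $S_E$. (Equivalently, this reads off from \eqref{e4.17}, since $E \lhd_H^* \tilde E$ gives ${\rm Lin}(E) \subseteq L_{\tilde E}$, and $l_{\tilde E}$ vanishes on $L_{\tilde E}$ by the second equality in \eqref{e4.18}.)

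The reverse inclusion $S_E \subseteq {\rm Lin}(E)$ is the substantive step, and I would argue it by contradiction. Suppose $x \in S_E$ but $x \notin {\rm Lin}(E)$; then $x \ne 0$, and since ${\rm Lin}(E) \supseteq L_H$ by Proposition~\ref{pr4.2a}, we have $x \notin L_H$, so the decomposition $X = (-H) \sqcup L_H \sqcup H$ from Lemma~\ref{lem1.1} places $x$ in $H$ or in $-H$. Because both $S_E$ and ${\rm Lin}(E)$ are symmetric under $x \mapsto -x$ (each $l_{\tilde E}$ being linear, and ${\rm Lin}(E)$ a subspace), I may assume $x \in H$. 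Let $E_x \in {\mathcal O}(H)$ be the open component containing $x$, so that $x \in {\rm Lin}(E_x)$. The linear hulls of the open components are linearly ordered by inclusion and order-isomorphic to $({\mathcal O}(H), \unlhd_H^*)$ by the final theorem of Section~\ref{sec4}; since $x \in {\rm Lin}(E_x)$ while $x \notin {\rm Lin}(E)$, nesting forces ${\rm Lin}(E) \subsetneq {\rm Lin}(E_x)$, that is $E \lhd_H^* E_x$. Then membership $x \in S_E$ yields $l_{E_x}(x) = 0$, whereas $x \in E_x$ together with the first equality in \eqref{e4.18} gives $l_{E_x}(x) > 0$ --- a contradiction. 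Hence $x \in {\rm Lin}(E)$.

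For the greatest-element case, when $E$ is the greatest element of ${\mathcal O}(H)$ there is no $\tilde E$ with $E \lhd_H^* \tilde E$, so the defining condition of $S_E$ is vacuous and $S_E = X$; on the other hand $F_H(E) = H$ for the greatest $E$, so \eqref{e4.10a} gives ${\rm Lin}(E) = (-H) \sqcup L_H \sqcup H = X$ (one may equally invoke $X = \bigcup\{{\rm Lin}(E') \mid E' \in {\mathcal O}(H)\}$ from \eqref{e4.17a} together with the nesting). The main obstacle I anticipate is the reverse inclusion: one must exploit the order-isomorphism between the components and their linear hulls to locate $x$ in a component strictly above $E$, and then play the strict sign condition from \eqref{e4.18} against the vanishing condition defining $S_E$. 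The symmetry reduction to $x \in H$ and the containment $L_H \subseteq {\rm Lin}(E)$ are the small points that keep the case analysis clean.
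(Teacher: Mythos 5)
Your proof is correct and follows essentially the same route as the paper: the inclusion ${\rm Lin}(E)\subseteq S_E$ comes from the second assertion of Proposition~\ref{pr4.11}, and the reverse inclusion exploits the decomposition of $X$ into open components together with the sign condition in \eqref{e4.18}. The only difference is presentational --- you argue the reverse inclusion contrapositively via the order-isomorphism between components and their linear hulls, while the paper argues directly that $x\notin(-\tilde{E})\cup\tilde{E}$ for all $\tilde{E}$ strictly above $E$ forces $x$ into $(-F_H(E))\cup L_H\cup F_H(E)={\rm Lin}(E)$; these are the same argument.
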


\begin{proof}
Suppose that ${E} \in {\mathcal O}(H)$ is not the greatest element of ${\mathcal O}(H)$.  Then the family $\{\tilde{E} \in {\mathcal O}(H) \mid {E} \lhd_H^* \tilde{E}\}$ is nonempty and we have, through the second assertion of Proposition \ref{pr4.11}, that $l_{\tilde{E}}(x)=0$ for all $x \in {\rm Lin}({E})$ whenever $\tilde{E} \in {\mathcal O}(H)$ is such that ${E} \lhd_H^* \tilde{E}$. Thus, the implication
$$
x \in {\rm Lin}({E}) \Longrightarrow l_{\tilde{E}}(x)=0\,\,\text{for all}\,\,\tilde{E} \in {\mathcal O}(H)\,\,\text{such that}\,\,{E} \lhd_H^* \tilde{E}
$$
is true.

Let us show that the reverse implication is also true. Suppose that for some $x \in X$ the equality $l_{\tilde{E}}(x) = 0$ holds for every $\tilde{E} \in {\mathcal O}(H)$ such that ${E} \lhd_H^* \tilde{E}$. Then it follows from the first equality from \eqref{e4.18} that $x \notin (-{\tilde{E}})\cup {\tilde{E}}$ for all $\tilde{E} \in {\mathcal O}(H)$ such that ${E} \lhd_H^* \tilde{E}$ and hence $x~\in~(-{E})\bigcup (-{\widehat{F}}({E}) \bigcup L_H \bigcup {\widehat{F}}({E}) \bigcup {E}  = {\rm Lin}({E}).$

It completes the proof of the first assertion.

The validity of the equality $X={\rm Lin}(E)$ for the greatest element $E \in ({\mathcal O}(H),\unlhd_H^*)$ follows from the equalities \eqref{e4.10a} and \eqref{e4.17a}.
\end{proof}

\begin{theorem}\label{th4.171}
Let $H \subset X$ be an asymmetric conical halfspace in $X$ and let ${\mathcal F}_H = \{l_E \mid E \in {\mathcal O}(H)\}$  be a family of linear functions associated with $H$. Then for each  $E \in {\mathcal O}(H)$ which is not the greatest element of $({\mathcal O}(H), \unlhd_H^*)$ the following equality
\begin{equation}\label{e4.21}
E= \{x \in X \mid l_{\tilde{E}}(x)=0\,\,\text{for all}\,\,\tilde{E} \in {\mathcal O}(H)\,\,
\text{such that}\,\,E \lhd_H^* \tilde{E}\,\,\text{and}\,\,l_E(x) > 0\}
\end{equation}
is true, while
\begin{equation}\label{e4.21a}
E= \{x \in X \mid l_E(x) > 0\}
\end{equation}
when $E \in {\mathcal O}(H)$ is the greatest element of $({\mathcal O}(H), \unlhd_H^*)$.

Furthermore, for every $E \in {\mathcal O}(H)$ the equality
\begin{equation}\label{e4.22}
L_E= \{x \in X \mid l_{\tilde{E}}(x)=0\,\,\text{for all}\,\,\tilde{E} \in {\mathcal O}(H)\,\,
\text{such that}\,\,E \unlhd_H^* \tilde{E}\}
\end{equation}
holds.

At last,
\begin{equation}\label{e4.23}
L_H = \{x \in X \mid l_{{E}}(x)=0\,\,\forall\,\,{E} \in {\mathcal O}(H)\}.
\end{equation}
\end{theorem}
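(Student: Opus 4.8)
The plan is to derive all of \eqref{e4.21}--\eqref{e4.23} from two facts already in hand: the pointwise analytic description of $E$ and $L_E$ in Proposition~\ref{pr4.11} (equation~\eqref{e4.18}) and the description of ${\rm Lin}(E)$ as a common zero set of the functionals indexed by the strictly larger components, given in Theorem~\ref{th4.161} (equation~\eqref{e4.19}). The first three equalities reduce to substitutions, so I would dispatch them quickly. For \eqref{e4.21} with $E$ not the greatest element, Theorem~\ref{th4.161} says that the clause ``$l_{\tilde E}(x)=0$ for all $\tilde E$ with $E\lhd_H^*\tilde E$'' is equivalent to $x\in{\rm Lin}(E)$; inserting this, the right-hand side of \eqref{e4.21} becomes $\{x\in{\rm Lin}(E)\mid l_E(x)>0\}$, which is $E$ by the first equality of \eqref{e4.18}. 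When $E$ is the greatest element, Theorem~\ref{th4.161} gives ${\rm Lin}(E)=X$, and then the first equality of \eqref{e4.18} reads exactly as \eqref{e4.21a}.

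For \eqref{e4.22} I would split the index set $\{\tilde E\mid E\unlhd_H^*\tilde E\}$, using reflexivity of $\unlhd_H^*$, into $\tilde E=E$ together with $\{\tilde E\mid E\lhd_H^*\tilde E\}$. The conditions coming from the strict part again amount to $x\in{\rm Lin}(E)$ by Theorem~\ref{th4.161}, while the condition from $\tilde E=E$ is $l_E(x)=0$. Hence the right-hand side of \eqref{e4.22} equals $\{x\in{\rm Lin}(E)\mid l_E(x)=0\}=L_E$ by the second equality of \eqref{e4.18}; the greatest-element case is subsumed since there ${\rm Lin}(E)=X$.

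The one step I would treat on its own is \eqref{e4.23}, since $\mathcal{O}(H)$ need not have a least element and so \eqref{e4.23} cannot simply be read off from \eqref{e4.22}. Set $S:=\{x\in X\mid l_E(x)=0\ \text{for all}\ E\in\mathcal{O}(H)\}$. The inclusion $L_H\subseteq S$ follows from Proposition~\ref{pr4.2a}, which gives $L_H\subset L_E$ for every $E$, combined with the second equality of \eqref{e4.18} forcing $l_E$ to vanish on $L_E$. For $S\subseteq L_H$ I would use the decomposition $X=(-H)\sqcup L_H\sqcup H$ from Lemma~\ref{lem1.1}: if $x\in S$ lay in $H$, then $x\in E$ for some open component $E$ and the first equality of \eqref{e4.18} would give $l_E(x)>0$; if $x\in-H$, then $-x\in E$ for some $E$ and $l_E(x)<0$; both contradict $x\in S$, so $x\in L_H$.

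No step presents a real obstacle; the only care needed is in the edge case of the greatest component, where ${\rm Lin}(E)=X$ renders several zero-conditions vacuous, and in keeping the reflexive relation $\unlhd_H^*$ distinct from its strict part $\lhd_H^*$ when deciding which functionals are required to vanish.
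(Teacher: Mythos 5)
Your proposal is correct and follows essentially the same route as the paper: the paper likewise obtains \eqref{e4.21}, \eqref{e4.21a} and \eqref{e4.22} by combining \eqref{e4.18} with \eqref{e4.19}, and derives \eqref{e4.23} from the inclusion $L_H \subset L_E$. Your direct verification of $S\subseteq L_H$ via the decomposition $X=(-H)\sqcup L_H\sqcup H$ is only a cosmetic variation on the paper's appeal to \eqref{e4.22}, and your attention to the greatest-element edge case matches the paper's handling.
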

\begin{proof}
The equalities \eqref{e4.21} and \eqref{e4.22} follows from \eqref{e4.18} and \eqref{e4.19}. Since ${\rm Lin}(E) = X$ when $E \in {\mathcal O}(H)$ is the greatest element of $({\mathcal O}(H), \unlhd_H^*)$ the equality \eqref{e4.21a} is actually the first equality from \eqref{e4.18}.

The equality \eqref{e4.23} follows from \eqref{e4.22} and the inclusion $L_H \subset L_E$ which holds for every $E \in  {\mathcal O}(H)$.
\end{proof}

Thus, we see from Theorems \ref{th4.161} and \ref{th4.171} that each open component $E$ of an asymmetric conical halfspace $H$  as well as the vector subspaces ${\rm Lin}(E)$ and $L_E$ related to $E$ can be completely characterized by linear functionals from the family ${\mathcal F}_H$. In the next theorem we highlight the main properties of the family ${\mathcal F}_H$.

\begin{theorem}\label{th4.18}
Let $H \subset X$ be an asymmetric convex halfspace in $X$ and let ${\mathcal F}_H = \{l_E \mid E \in {\mathcal O}(H)\}$  be a family of linear functions associated with $H$ and linearly ordered by the relation $\unlhd^*_H$.

For each $x \in X$ either the subfamily ${\mathcal F}_H(x):=\{l_E \in {\mathcal F}_H \mid l_E(x) \ne 0\}$ is empty or ${\mathcal F}_H(x)$ has the least (with respect to $\unlhd'_H$) element.

Furthermore, for each $l_E \in {\mathcal F}_H$ there exists $x \in X$ such that $l_E$ is the least (with respect to $\unlhd'_H$) element of ${\mathcal F}_H(x)$.
\end{theorem}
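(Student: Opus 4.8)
The plan is to reduce everything to the vanishing behaviour of the functionals $l_E$ recorded in Proposition~\ref{pr4.11}, together with the decomposition $X = (-H)\bigsqcup L_H \bigsqcup H$ furnished by Lemma~\ref{lem1.1}. Throughout I would work with the order $\unlhd^*_H$ rather than $\unlhd'_H$: since $\unlhd'_H$ is anti-isomorphic to $\unlhd^*_H$, finding the least element of $\mathcal{F}_H(x)$ with respect to $\unlhd'_H$ is the same as finding the \emph{greatest} element of the index set $\{E \in \mathcal{O}(H) \mid l_E(x) \ne 0\}$ with respect to $\unlhd^*_H$.

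First I would dispose of the empty case. By \eqref{e4.23} we have $L_H = \{x \in X \mid l_E(x) = 0 \text{ for all } E \in \mathcal{O}(H)\}$, so $\mathcal{F}_H(x) = \varnothing$ precisely when $x \in L_H$. Hence for the first assertion it remains to treat $x \notin L_H$, i.e. $x \in H \cup (-H)$. Since $l_E(-x) = -l_E(x)$, replacing $x$ by $-x$ changes neither the subfamily $\mathcal{F}_H(x)$ nor the element of it we must exhibit, so I may assume $x \in H$.

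Now let $E_0 \in \mathcal{O}(H)$ be the unique open component containing $x$ (the components partition $H$). Two facts drive the argument. On the one hand, $x \in E_0$ together with \eqref{e4.18} gives $l_{E_0}(x) > 0$, so $l_{E_0} \in \mathcal{F}_H(x)$. On the other hand, $x \in E_0 \subset {\rm Lin}(E_0)$, and the last assertion of Proposition~\ref{pr4.11} states that $l_{\tilde E}$ vanishes on ${\rm Lin}(E_0)$ whenever $E_0 \lhd^*_H \tilde E$; consequently $l_{\tilde E}(x) = 0$, i.e. $\tilde E \notin \{E \mid l_E(x) \ne 0\}$, for every $\tilde E$ lying strictly above $E_0$. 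Because $\mathcal{O}(H)$ is linearly ordered, any $E$ with $l_E(x) \ne 0$ must therefore satisfy $E \unlhd^*_H E_0$; thus $E_0$ is the greatest element of $\{E \mid l_E(x) \ne 0\}$ and $l_{E_0}$ is the least element of $\mathcal{F}_H(x)$ with respect to $\unlhd'_H$.

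The second assertion then comes for free: given $l_E \in \mathcal{F}_H$, the open component $E$ is nonempty (it is an $\eq_H$-equivalence class), so I pick any $x \in E$; then $E$ is exactly the component containing $x$, and the paragraph above shows that $l_E$ is the least element of $\mathcal{F}_H(x)$. The only real obstacle is the key step of the third paragraph — verifying that no functional strictly above $l_{E_0}$ survives at $x$ — and this is handled cleanly once one observes that $x$ lies in ${\rm Lin}(E_0)$ and invokes the vanishing property of Proposition~\ref{pr4.11}; the totality of $\unlhd^*_H$ then upgrades ``nothing strictly larger survives'' into ``$E_0$ is the maximum''.
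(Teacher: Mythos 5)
Your proof is correct and follows essentially the same route as the paper: both handle the empty case via the vanishing of every $l_E$ on $L_H$, identify the candidate least element as the functional attached to the open component containing $x$ (or $-x$), and kill everything strictly above it using the vanishing of $l_{\tilde E}$ on ${\rm Lin}(E_0)$ for $E_0 \lhd_H^* \tilde E$ (the paper cites \eqref{e4.21} where you cite the last assertion of Proposition~\ref{pr4.11}, but these encode the same fact). No gaps.
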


\begin{proof}
Since $L_H \subset L_E \subset {\rm Lin}(E)$ for all $E \in {\mathcal O}(H)$ (see Proposition \ref{pr4.2a}), then, through the second equality in \eqref{e4.18}, for every $E \in {\mathcal O}(H)$ we have $l_E(x)=0$ for all $x \in L_H$. Thus, ${\mathcal F}_H(x) = \varnothing$ for all $x \in L_H$.

Now, let $x \in X \setminus L_H$. Then, there exists $E \in {\mathcal O}(H)$ such that either $x \in E$ or $x \in -E$ and hence, due to the first equality in \eqref{e4.18}, we have either $l_E(x) > 0$  or $l_E(x) < 0$. Consequently, $l_E(x) \ne 0$  and ${\mathcal F}_H(x) \ne \varnothing$. Moreover, it follows from \eqref{e4.21} that $l_{\tilde{E}}(x)=0$ for all $l_{\tilde{E}} \lhd'_H l_E$. Thus, $l_E$ is the least element of ${\mathcal F}_H(x)$.

To prove the last assertion we note that each  $l_E \in {\mathcal F}_H$ is the least element of ${\mathcal F}_H(x)$ (with respect to $\unlhd'_H$) for $x \in E$.
\end{proof}

The properties of the family ${\mathcal F}_H$ described in Theorem \ref{th4.18} shows that ${\mathcal F}_H$ is a cortege of linear functions on $X$.

Recall the definition of a cortege of linear functions.

\begin{definition}{\rm \cite{Gor-m,GS2000,Gor21}}
{\rm A family of linear functions ${\mathcal F} \subset {\mathcal L}(X,{\mathbb{R}})$ is said to be a \textit{cortege of linear functions on $X$} if

$(i)$  ${\mathcal F}$ is ordered by some linear order $\unlhd_{{\mathcal F}}$;

$(ii)$ for each $x \in X$ either the subfamily ${\mathcal F}_x := \{l \in {\mathcal F} \mid l(x) \ne 0\}$ is empty or ${\mathcal F}_x$ has the least (with respect to $\unlhd_{{\mathcal F}}$) element $l_x$;

$(iii)$ for each $l \in {\mathcal F}$ there exists $x \in X$ such that  $l=l_x$, i.e., each $l \in {\mathcal F}$ is the least element ${\mathcal F}_x$ for some $x \in X$.}
\end{definition}
Families of linear functions satisfying conditions $(i)$ and $(ii)$ have been introduced by
Klee \cite{Klee} for analytic representation of semispaces. The term ``cortege of linear functions'' for families of linear function satisfying the conditions ($i$) -- ($iii$) has been introduced
by Gorokhovik \cite{Gor-m}. In \cite{GS2000} Gorokhovik and Shinkevich have introduced the concept of a cortege of affine functions generalizing the concept of a cortege of linear functions and used it for the analytical representation of arbitrary halfspaces.

Let $({\mathcal F},\unlhd_{\mathcal F})$ be a cortege of linear functions on $Y$.
For each $l \in {\mathcal F}$ we denote by $X_l$ the following vector subspace
$$
X_l := \{x \in X \mid l'(x)=0\,\,\text{for all}\,\,l' \lhd_{\mathcal F} l\}.
$$
If $l \in {\mathcal F}$ is the least element of ${\mathcal F}$ we suppose $X_l =X$.

It follows from $(ii)$ that if $x \in X$ is such that ${\mathcal F}_x \ne \varnothing$ then $x$ belongs $X_{l_x}$. It implies that $l_x(X_{l_x})= {\mathbb R}$  and, through $(iii)$, we conclude that every
$l \in {\mathcal F}$ is nonzero. Moreover (see \cite{GS2000}), any cortege $({\mathcal F}, \unlhd_{\mathcal F})$ is a linearly independent family in ${\mathcal L}(X,{\mathbb{R}})$.

Every cortege of linear functions generates a real-valued function called step-linear which is defined as follows.

\begin{definition}{\rm \cite{GS2000,Gor21}}
{\rm A real-valued function $u:X \to {\mathbb{R}}$ is called \textit{step-linear} if there exists a cortege of linear functions ${\mathcal F}$ on $X$ such that $u(x) = u_{{\mathcal F}}(x), x \in X$, where
$$
u_{\mathcal F}(x):=\left\{
  \begin{array}{cr}
       0,&\text{when}\,\,{{\mathcal F}}_{x} = \varnothing,\\
     l_x(x),&\text{when}\,\,{{\mathcal F}}_{x} \ne \varnothing.\\
  \end{array}
     \right.
$$
}
\end{definition}

Clearly, that any nonzero linear function $l$ can be considered as a step-linear one generated by the one-element cortege ${\mathcal F} =\{l\}$.

For a finite cortege of linear functions ${\mathcal F}:= \{l_1,l_2,\ldots,l_k\}$ which is ordered according to numbering ($l_1 \lhd_{\mathcal F} l_2 \lhd_{\mathcal F} \ldots \lhd_{\mathcal F} l_k$) the function $u_{\mathcal F}$ is defined as follows
\begin{equation*}
u_{{\mathcal F}}(x)=\left\{
  \begin{array}{ll}
       l_1(x),&\text{if}\,\,l_1(x) \ne 0,\\
       l_2(x),&\text{if}\,\,l_1(x) = 0,l_2(x) \ne 0,\\
       ......... & ....................................................................... \\
       l_{k-1}(x),&\text{if}\,\,l_1(x) = 0, \ldots,  l_{k-2}(x)=0,l_{k-1}(x) \ne 0,\\
       l_{k}(x),&\text{if}\,\,l_1(x) =0, \ldots, l_{k-1}(x)=0.
       \end{array}
     \right.
\end{equation*}

For the first time step-linear functions (under the name ``conditionally linear functions'') have been introduced in \cite{Gor-m}. Later step-linear functions as well as step-affine ones generalizing them have been studied in \cite{GS2000,Gor21}.

\begin{proposition}\label{pr5.5}
Every step-linear function $u:X \to {\mathbb{R}}$ is homogeneous, i.e., $u(tx)= tu(x)$ for all $x \in X$ and all $t \in {\mathbb{R}}$, and, moreover, for any $x,y \in X$ the following implications hold:
\begin{equation}\label{e5.22}
u(y)=0 \Longrightarrow u(x+y)=u(x)
\end{equation}
and
\begin{equation}\label{e5.23}
u(x)>0,u(y)>0 \Longrightarrow u(x+y)> 0.
\end{equation}
\end{proposition}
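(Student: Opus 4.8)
The plan is to work directly with a cortege $\mathcal{F}$ generating $u = u_{\mathcal F}$ and to lean on the defining minimality of the functions $l_x$ throughout. The single observation I would isolate first is that $u(y)=0$ is equivalent to $\mathcal{F}_y = \varnothing$, i.e.\ to $l(y)=0$ for every $l \in \mathcal{F}$: indeed, whenever $\mathcal{F}_y \neq \varnothing$ its least element $l_y$ lies in $\mathcal{F}_y$ and hence satisfies $l_y(y)\neq 0$, so that $u(y)=l_y(y)\neq 0$. This equivalence is exactly what makes the hypothesis of \eqref{e5.22} usable.

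For homogeneity I would treat $t=0$ separately (each $l \in \mathcal{F}$ is linear, so $l(0)=0$, $\mathcal{F}_0=\varnothing$, and $u(0)=0$), and for $t\neq 0$ use that $l(tx)=t\,l(x)$ vanishes exactly when $l(x)$ does. Thus $\mathcal{F}_{tx}=\mathcal{F}_x$; if this set is empty both sides equal $0$, and otherwise the two sets share the least element $l_x=l_{tx}$, giving $u(tx)=l_x(tx)=t\,l_x(x)=t\,u(x)$. The implication \eqref{e5.22} is handled in the same spirit: by the opening observation $u(y)=0$ forces $l(y)=0$ for all $l$, so $l(x+y)=l(x)$ for every $l \in \mathcal{F}$, whence $\mathcal{F}_{x+y}=\mathcal{F}_x$; if both are empty then $u(x+y)=0=u(x)$, and otherwise they share the least element $l_x$ and $l_x(x+y)=l_x(x)+l_x(y)=l_x(x)$ yields $u(x+y)=u(x)$.

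The hard part will be \eqref{e5.23}, where the linearity of the order on $\mathcal{F}$ must be used essentially. Here I would start from $u(x)=l_x(x)>0$ and $u(y)=l_y(y)>0$ and, since $\unlhd_{\mathcal F}$ is total, assume without loss of generality (by symmetry in $x$ and $y$) that $l_x \unlhd_{\mathcal F} l_y$, splitting into the strict case $l_x \lhd_{\mathcal F} l_y$ and the case $l_x=l_y$. The crux is to identify the least element of $\mathcal{F}_{x+y}$. In the strict case, minimality of $l_y$ in $\mathcal{F}_y$ gives $l_x(y)=0$ (since $l_x \lhd_{\mathcal F} l_y$), so $l_x(x+y)=l_x(x)>0$ and $l_x \in \mathcal{F}_{x+y}$; moreover any $l' \lhd_{\mathcal F} l_x$ satisfies $l'(x)=0$ by minimality of $l_x$ in $\mathcal{F}_x$ and $l'(y)=0$ because $l' \lhd_{\mathcal F} l_y$, hence $l'(x+y)=0$, so $l_x$ is in fact the least element of $\mathcal{F}_{x+y}$ and $u(x+y)=l_x(x)>0$. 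In the case $l_x=l_y=:l$ one has $l(x+y)=l(x)+l(y)>0$, while every $l' \lhd_{\mathcal F} l$ kills both $x$ and $y$, so again $l$ is the least element of $\mathcal{F}_{x+y}$ and $u(x+y)=l(x)+l(y)>0$. Either way $u(x+y)>0$, completing the argument.
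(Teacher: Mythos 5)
Your proposal is correct and follows essentially the same route as the paper's proof: homogeneity from the homogeneity of the members of the cortege, \eqref{e5.22} from the observation that $u(y)=0$ forces $l(y)=0$ for every $l\in\mathcal{F}$, and \eqref{e5.23} by splitting on whether $l_x\lhd_{\mathcal F}l_y$, $l_y\lhd_{\mathcal F}l_x$, or $l_x=l_y$ and identifying $l_{x+y}$ in each case. Your write-up is somewhat more explicit than the paper's (in particular in isolating the equivalence $u(y)=0\Leftrightarrow\mathcal{F}_y=\varnothing$ and in verifying minimality of $l_x$ in $\mathcal{F}_{x+y}$), but the argument is the same.
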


\begin{proof}
Let $u:x \to {\mathbb{R}}$ be a step-linear function and let ${\mathcal F}$ be a cortege of linear functions on $X$ such that $u(x) = u_{\mathcal F}(x)$ for all $x \in X$.

The homogeneity of $u$ follows from the homogeneity of the linear functions belonging to the cortege ${\mathcal F}$.

To prove the implications \eqref{e5.22} and \eqref{e5.23} we first observe that through additivity of linear functions  the equality
$l(x+y) = l(x)+l(y)$
holds for any $x,y \in X$ and for all $l \in {\mathcal F}$.

If $u(y)=0$ then $l(y) = 0$ for all $l \in {\mathcal F}$ and hence  $l(x+y) = l(x)$ for all $l \in {\mathcal F}$. This proves \eqref{e5.22}.

Assume that $u(x)>0,u(y)>0$. If $l_x \lhd_{\mathcal{F}} l_y$ then $l(x+y)=l(x)+l(y)=0$ for all $l \lhd_{\mathcal{F}} l_x$ and $l_x(x+y)=l_x(x)+l_x(y) = l_x(x)$. Thus, $l_{x+y}=l_x$ and hence $u(x+y)=u(x)>0$. Similarly, if $l_x \lhd_{\mathcal{F}} l_y$ we get that $u(x+y)=u(y)>0$. At last, if $l_x = l_y$, we have that $l(x+y)=l(x)+l(y)=0$ for all $l \lhd_{\mathcal{F}} l_x=l_y$ and $l_x(x+y)=l_x(x)+l_x(y)=l_x(x)+l_y(y)>0$. Thus, in this case we have that $l_{x+y}=l_x=l_y$ and $u(x+y)=u(x)+u(y)>0$. This completes the proof of the implication \eqref{e5.23}.
\end{proof}

\begin{theorem}\label{th5.7}
A subset $H \subset X$ is an asymmetric conical halfspace in $X$ if and only if there exists a step-linear function $u:X \to {\mathbb{R}}$ such that
\begin{equation}\label{e5.27}
x \in H \Longleftrightarrow u(x) > 0.
\end{equation}
Furthermore,
\begin{equation}\label{e5.28}
x \in L_H \Longleftrightarrow u(x) = 0
\end{equation}
for every step-linear function $u:X \to {\mathbb{R}}$ which satisfies \eqref{e5.27}.
\end{theorem}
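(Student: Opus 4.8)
The plan is to prove both directions of the equivalence by exploiting the structural results already established. For the ``only if'' direction, I would start from an arbitrary asymmetric conical halfspace $H \subset X$ and invoke the machinery developed in Sections 4 and 5: the family $({\mathcal O}(H),\unlhd_H^*)$ is linearly ordered (Corollary following the totality proposition), and by Proposition~\ref{pr4.11} each open component $E$ yields a nonzero linear function $l_E$ satisfying \eqref{e4.18}. Theorem~\ref{th4.18} then shows precisely that the associated family ${\mathcal F}_H = \{l_E \mid E \in {\mathcal O}(H)\}$, ordered by $\unlhd'_H$, is a cortege of linear functions. I would let $u := u_{{\mathcal F}_H}$ be the step-linear function this cortege generates and verify \eqref{e5.27}. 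The key computation is that for $x \in X \setminus L_H$ there is a unique $E \in {\mathcal O}(H)$ with $x \in E \cup (-E)$, and by Theorem~\ref{th4.18} the function $l_E$ is the least element $l_x$ of ${\mathcal F}_H(x)$; hence $u(x) = l_E(x)$, which by \eqref{e4.21}--\eqref{e4.21a} is positive exactly when $x \in E \subset H$. For $x \in L_H$ we have ${\mathcal F}_H(x) = \varnothing$ by the first part of Theorem~\ref{th4.18}, so $u(x) = 0$; since $H$ is asymmetric, $L_H \cap H = \varnothing$, consistent with \eqref{e5.27}. This simultaneously delivers \eqref{e5.28} for this particular $u$, using $X = (-H) \bigsqcup L_H \bigsqcup H$.

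For the ``if'' direction, I would assume $u$ is an arbitrary step-linear function and set $H := \{x \in X \mid u(x) > 0\}$. I must show $H$ is an asymmetric conical halfspace. Homogeneity of $u$ (Proposition~\ref{pr5.5}) gives that $H$ is a cone, and implication \eqref{e5.23} gives $H + H \subseteq H$, so $H$ is a convex cone; since $u(0) = 0$, we have $0 \notin H$, so $H$ is asymmetric. To prove $H$ is a conical halfspace I would appeal to Lemma~\ref{lem1.1}: it suffices to check that $X \setminus (H \cup (-H))$ is a vector subspace. By homogeneity, $-H = \{x \mid u(x) < 0\}$, so $X \setminus (H \cup (-H)) = \{x \mid u(x) = 0\} =: N_u$. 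That $N_u$ is a subspace follows from homogeneity (closure under scalar multiples) together with the crucial additivity implication \eqref{e5.22}: if $u(x) = u(y) = 0$, then $u(x+y) = u(x) = 0$. Lemma~\ref{lem1.1} then yields that $H$ is a conical halfspace and that $L_H = N_u = \{x \mid u(x) = 0\}$, which is exactly \eqref{e5.28} for this $u$.

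Finally, I would tie up the ``furthermore'' clause so that \eqref{e5.28} holds for \emph{every} step-linear $u$ satisfying \eqref{e5.27}, not merely the constructed one. Given such a $u$, the ``if'' argument above shows $\{x \mid u(x) = 0\}$ is a vector subspace equal to $X \setminus (H \cup (-H))$, and by Lemma~\ref{lem1.1} this coincides with $L_H$; so \eqref{e5.28} is automatic. I expect the main obstacle to be the bookkeeping in the ``only if'' direction: one must argue carefully that $u(x) = l_x(x)$ recovers the sign of $x$ relative to $H$ using both equalities in \eqref{e4.18} together with the ``least element'' characterization, handling separately the greatest-component case \eqref{e4.21a} and the general case \eqref{e4.21}, and confirming that the negative values $l_E(x) < 0$ for $x \in -E$ are returned by $u$ rather than masked by some strictly $\unlhd'_H$-smaller functional. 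The properties $(ii)$ and $(iii)$ of a cortege, already verified in Theorem~\ref{th4.18}, are exactly what rule out such masking, so the obstacle is a matter of assembling the established facts rather than a genuinely new difficulty.
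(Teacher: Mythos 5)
Your proposal is correct and follows essentially the same route as the paper: the forward direction builds the cortege ${\mathcal F}_H$ via Theorem \ref{th4.18} and reads off \eqref{e5.27}--\eqref{e5.28} from Theorem \ref{th4.171}, and the converse uses homogeneity together with \eqref{e5.22}--\eqref{e5.23} and Lemma \ref{lem1.1} to identify $X \setminus (H \cup (-H))$ with $\{x \mid u(x)=0\} = L_H$. In fact you supply more of the bookkeeping (the ``no masking'' point and the uniform treatment of the furthermore clause) than the paper, which leaves those verifications implicit.
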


\begin{proof}
It was shown above that each asymmetric conical halfspace $H \subset X$ can be associated with the family of linear functions ${\mathcal{F}}_H := \{l_E \in {\mathcal L}(X, \mathbb{R}) \mid E \in {\mathcal O}(H), l_E \,\,\text{satisfies}\,\,\eqref{e4.18}\}$. It follows from Theorem \ref{th4.18} that ${\mathcal{F}}_H$ ordered by the linear order $\unlhd'_H$ is a cortege of linear functions on $X$. Using the equality $H = \bigsqcup\{E \mid E \in {\mathcal O}(H)\}$ and the characterization of $E \in {\mathcal O}(H)$ given in Theorem \ref{th4.171} it is not difficult to verify that for the step-linear function $u_H : X \to {\mathbb{R}}$ generated by the cortege ${\mathcal{F}}_H$ we have $x \in H \Longleftrightarrow u_H(x) > 0$ and $x \in L_H \Longleftrightarrow u_H(x) = 0$.

Conversely, let $u:X \to {\mathbb{R}}$ be a step-linear function and let a subset $H \subset X$ be defined by \eqref{e5.27}. It follows from the property \eqref{e5.23} and from the homogeneity of $u$ that the subset $H$ is a convex cone. Furthermore, the homogeneity of $u$ implies that $H$ is asymmetric. Thus, the subset $H$ is an asymmetric convex cone. It is easy to see from \eqref{e5.27} that $u(x) =0$ for any $x \in X \setminus ((-H)\bigcup H)$. From this, since $u$ is homogeneous and satisfies the property \eqref{e5.23}, we deduce that $X \setminus ((-H)\bigcup H)$ is a vector subspace in $X$ and, consequently, $H$ is asymmetric conical halfspace with $L_H = X \setminus ((-H)\bigcup H) = \{x \in X \mid u(x) = 0\}$.
\end{proof}

The next theorem describes the class of such asymmetric conical halfspace for which in \eqref{e5.27} and \eqref{e5.28} step-linear functions can be replaced by linear functions.

\begin{theorem}\label{th5.8}
Let $H \subset X$ be an asymmetric conical halfspace in a real vector space $X$. Then the following statements are equivalent:

$($i$)$ $H$ is algebraic open, i.e., ${\rm cr}H \ne \varnothing$ and $H = {\rm cr}H$;

$($ii$)$ the family of open components of $H$ is a singleton, that is ${\mathcal O}(H) = \{H\}$;

$($iii$)$ there exists a linear function $l:X \to {\mathbb{R}}$ such that
\begin{equation}\label{e5.29}
x \in H \Longleftrightarrow l(x) > 0\,\,\text{and}\,\,x \in L_H \Longleftrightarrow l(x) =0.
\end{equation}
\end{theorem}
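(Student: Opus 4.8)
The plan is to prove the equivalence by the cycle (i) $\Rightarrow$ (ii) $\Rightarrow$ (iii) $\Rightarrow$ (i). First I would record one preliminary fact that underlies the whole argument: for a conical halfspace the algebraic interior and the relative algebraic interior coincide, ${\rm cr}H = {\rm icr}H$. To see this, note that $H$ is a nonempty cone, so for any $x \in H$ the affine combinations of $x$ and $2x$ sweep out the whole line $\{t x \mid t \in {\mathbb R}\}$; in particular $0 \in {\rm aff}(H)$, whence ${\rm aff}(H)$ is a linear subspace and ${\rm aff}(H) = {\rm Lin}(H)$. Since ${\rm Lin}(H) = X$ for every conical halfspace (a consequence of Lemma~\ref{lem1.1}), we get ${\rm aff}(H) = X$, and then the definitions of ${\rm cr}$ and ${\rm icr}$ literally coincide because every line of $X$ lies in ${\rm aff}(H)$.

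With this in hand, (i) $\Rightarrow$ (ii) is immediate: $H = {\rm cr}H$ together with ${\rm cr}H = {\rm icr}H$ gives $H = {\rm icr}H$, i.e. $H$ is relatively algebraic open, and the Corollary characterizing relatively algebraic open convex cones then forces ${\mathcal O}(H) = \{H\}$. For (ii) $\Rightarrow$ (iii) I would appeal to Proposition~\ref{pr4.11}: the single open component $E = H$ is the greatest element of ${\mathcal O}(H)$, so ${\rm Lin}(E) = X$, and the linear function $l := l_H$ attached to it by \eqref{e4.18} satisfies $H = \{x \in X \mid l(x) > 0\}$ and $L_H = \{x \in X \mid l(x) = 0\}$, which is exactly \eqref{e5.29}.

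It remains to close the loop with (iii) $\Rightarrow$ (i), and here I would argue directly. Given $l$ as in \eqref{e5.29}, it is nonzero (otherwise $H$ would be empty, contrary to $H$ being a conical halfspace). For each $x \in H$ we have $l(x) > 0$, and for an arbitrary direction $y \in X$ the scalar function $t \mapsto l(x + ty) = l(x) + t\, l(y)$ remains positive on some symmetric interval $(-\delta, \delta)$; hence $x + t y \in H$ for all such $t$, so $x \in {\rm cr}H$. This yields $H \subseteq {\rm cr}H \subseteq H$, that is $H = {\rm cr}H \ne \varnothing$, which is (i).

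I expect the only genuinely delicate step to be the opening reduction ${\rm cr}H = {\rm icr}H$; once it is available, each implication collapses to a cited result or a one-line computation. The subtle point there is that I must establish equality of the \emph{affine} hull of $H$ with all of $X$, not merely of its linear hull, since it is ${\rm aff}(H) = X$ — not ${\rm Lin}(H) = X$ — that forces the two interior notions to agree.
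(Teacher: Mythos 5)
Your proposal is correct and follows essentially the same route as the paper: the substance of (i)$\Leftrightarrow$(ii) is identical (via $H={\rm icr}H$ and the characterization of relatively algebraic open cones), the linear function in (iii) comes from the same separation fact (the paper extracts it from $L_H$ being the boundary hyperplane of an algebraically open halfspace, you from Proposition~\ref{pr4.11}, which rests on the same separation theorem), and (iii)$\Rightarrow$(i) is the same one-line computation the paper leaves to the reader. Your explicit verification that ${\rm aff}(H)=X$, hence ${\rm cr}H={\rm icr}H$ --- a point the paper asserts from ${\rm Lin}(H)=X$ without comment --- is a welcome extra detail, but it does not change the argument's structure.
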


\begin{proof}
$(i) \Longleftrightarrow (ii)$. Let the statement $(i)$ be satisfied. Since ${\rm cr}H \ne \varnothing$, then ${\rm icr}H = {\rm cr}H = H$ and, hence, by \eqref{e2.22}  we obtain that $x \in H$ if and only if  $y \unlhd_H x\,\,\forall\,\,y \in H$. But the latter is equivalent to the assertion that $y \eq_H x$ for all $x,y \in H$ and hence ${\mathcal O}(H) = \{H\}$.

Conversely, let ${\mathcal O}(H) = \{H\}$. Since each open component of $H$ is relatively algebraic open then $H = {\rm icr}H$. But, $H$ is an asymmetric conical halfspace, therefore $X = (-H) \bigsqcup L_H \bigsqcup H$ and, consequently, ${\rm Lin}(H) = X$. It implies that ${\rm icr}H = {\rm cr}H = H$. Thus, $H$ is algebraic open.

The proof of the equivalence $(i) \Longleftrightarrow (ii)$ is complete.

$(i) \Longleftrightarrow (iii)$. It follows from $(i)$ and the equality $X = (-H) \bigsqcup L_H \bigsqcup H$ that $H$ is algebraic open halfspace in $X$ and  $L_H$ is a homogeneous hyperplane which is the boundary of $H$. From this we conclude that there exists a nonzero linear function $l: X \to {\mathbb{R}}$ such that $L_H =\{x \in X \mid l(x)=0\}$ and $H = \{x \in X \mid l(x)>0\}$.
This completes the proof of the implication $(i) \Longrightarrow (iii)$.

It is easily verified that the reverse implication $(iii) \Longrightarrow (i)$ is also true.
\end{proof}

\begin{remark}
{\rm Theorem \ref{th5.7} shows that step-linear functions can be considered as dual objects to conical halfspaces. It follows from Theorem \ref{th5.8} that this duality actually is an extension of the classical duality between algebraically    open halfspaces and linear functions.}
\end{remark}

\begin{example}
{\rm Consider the convex cone $H$ in ${\mathbb{R}}^3$ defined as follows: $H := \{(x_1,x_2,x_3) \in {\mathbb{R}}^3 \mid x_1>0\,\,\text{or}\,\,x_1=0,x_2>0\}$. It is easy to verify that $H$ is an asymmetric conical halfspace and $L_H = \{(x_1,x_2,x_3) \in {\mathbb{R}}^3 \mid  x_1=0, x_2=0\}$. For this asymmetric conical halfspace $H$ the step-linear function $u: {\mathbb{R}}^3 \to {\mathbb{R}}$ such that $u(x_1,x_2,x_3) = x_1$ when $x_1 \ne 0$ and $u(x_1,x_2,x_3) = x_2$ when $x_1 = 0, x_2 \ne 0$
satisfies \eqref{e5.27} and \eqref{e5.28}, while there is no linear function which satisfies \eqref{e5.29}. Note, that ${\rm cr}H = \{(x_1,x_2,x_3) \in {\mathbb{R}}^3 \mid x_1 > 0\} \ne H$ and, consequently, the statement $(i)$ from Theorem \ref{th5.8} does not hold.}
\end{example}

\section{Extension of asymmetric convex cones to asymmetric conical halfspaces and analytical representation of asymmetric convex cones}\label{sec6}

We say that an asymmetric convex cone $K_1 \subset X$ \textit{extends} another asymmetric convex cone $K_2 \subset X$ if $K_2\,\subseteq\,K_1$ and we say that an asymmetric convex cone $K_1$ \textit{regularly extends} an asymmetric convex cone $K_2$ if $K_2\,\subseteq\,K_1$ and, in addition, $L_{K_2}\,\subseteq\,L_{K_1}$.

\begin{example}
{\rm Consider convex cones  $K_1$ and $K_2$ defined on ${\mathbb R}^3$ as follows: $K_1 := (x_1,x_2,x_3) \mid (x_1 > 0, x_2 > 0)\,\,\text{or}\,\,(x_1 = 0, x_2 = 0, x_3 > 0)\}$ and $K_2 := \{(x_1,x_2,x_3) \mid x_1 > 0, x_2 > 0\}$. It is easy to see that $K_2 \subset K_1$ and hence $K_1$ extends $K_2$ but the extension $K_2$ to $K_1$ is not regular, since $L_{K_2} = \{(x_1,x_2,x_3) \mid x_1 = 0, x_2 = 0\} \not\subset L_{K_1} = \{(x_1,x_2,x_3) \mid x_1 = 0, x_2 = 0, x_3 = 0\}$.}
\end{example}

To prove the existence of regular extensions for each asymmetric convex cone we need the following theorem on separation of convex cones by conical halfspaces \cite{Gor21} which is presented here without proof.

\begin{theorem}{\rm \cite{Gor21}}\label{th6.1}
Let  $K_{1}$ and $K_{2}$ be convex cones in a real vector space $X$, and let $K_{1}$ be asymmetric. Then
$K_{1}\cap K_{2}=\varnothing$ if and only if there exists an asymmetric conical halfspace $H \subset X$ such that
$K_{1}\subset H$ and $K_2 \subset X \setminus H$ or,
 equivalently, if and only if there exists a step-linear function
$u : X \rightarrow {\mathbb R}$ such that
$$
u\,(x) > 0\,\,\text{for all}\,\,x\in K_{1},\quad
u\,(x)\leq 0\,\,\text{for all}\,\,x\in K_{2}.
$$
\end{theorem}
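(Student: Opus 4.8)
The plan is to isolate the purely formal parts and concentrate everything on a single maximality argument. The two existence assertions are interchangeable: by Theorem~\ref{th5.7} a set $H \subseteq X$ is an asymmetric conical halfspace exactly when $H = \{x \mid u(x) > 0\}$ for some step-linear $u$, so that $X \setminus H = \{x \mid u(x) \le 0\}$; hence ``$K_1 \subseteq H$ and $K_2 \subseteq X \setminus H$'' says precisely ``$u > 0$ on $K_1$ and $u \le 0$ on $K_2$''. The implication ``$\Leftarrow$'' is immediate, since $K_1 \cap K_2 \subseteq H \cap (X \setminus H) = \varnothing$. Thus the entire content is the converse: if $K_1 \cap K_2 = \varnothing$ and $K_1$ is asymmetric, there is an asymmetric conical halfspace $H$ with $K_1 \subseteq H$ and $H \cap K_2 = \varnothing$.

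I would produce $H$ by Zorn's lemma applied to the family $\mathcal{P}$ of all asymmetric convex cones $C$ with $K_1 \subseteq C$ and $C \cap K_2 = \varnothing$, ordered by inclusion. It is nonempty because $K_1 \in \mathcal{P}$ (this is exactly where $K_1$ asymmetric and $K_1 \cap K_2 = \varnothing$ are used), and the union of a chain in $\mathcal{P}$ is again a convex cone, still disjoint from $K_2$, and still asymmetric (a pair $z, -z$ in the union would lie in a single member of the chain). A maximal element $H$ therefore exists, with $K_1 \subseteq H$ and $K_2 \subseteq X \setminus H$ automatic. It remains to show that such a maximal $H$ is a conical halfspace, i.e.\ that $X \setminus H$ is a convex cone; it is clearly closed under multiplication by positive scalars, since $tw \in H$ with $t > 0$ would force $w \in H$.

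The core step, which I expect to be the main obstacle, is to verify that $X \setminus H$ is closed under addition. Writing $X \setminus H = (-H) \sqcup M$ with $M := X \setminus (H \cup (-H))$, I would take $u, v \in X \setminus H$, assume $u + v \in H$ for contradiction, and split into cases. If $u, v \in -H$, then both $u+v \in H$ and $-(u+v) \in H$, contradicting asymmetry of $H$; if $u \in -H$ and $v \in M$, then $v = (u+v) + (-u) \in H$, contradicting $v \notin H$. The essential case is $u, v \in M$. Here maximality enters through the observation that for $x \in M$ the convex cone $C_x$ generated by $H \cup \{x\}$ strictly contains $H$, so $C_x \notin \mathcal{P}$; a short computation shows that $C_x$ \emph{failing to be asymmetric} would force $x \in -H$, which is excluded, so necessarily $C_x \cap K_2 \ne \varnothing$, yielding a representation $t x = k - h$ with $t > 0$, $k \in K_2$, and $h \in H \cup \{0\}$.

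Applying this to both $u$ and $v$ gives $t_1 u = k_1 - h_1$ and $t_2 v = k_2 - h_2$. Forming $t_1 t_2 (u+v) = t_2(k_1 - h_1) + t_1(k_2 - h_2)$ and rearranging yields
\[
(t_2 h_1 + t_1 h_2) + t_1 t_2 (u+v) = t_2 k_1 + t_1 k_2 .
\]
The right-hand side lies in $K_2$ because $K_2$ is a convex cone (this is precisely where the convexity of $K_2$ is indispensable), while the left-hand side lies in $H$ because $u+v \in H$, each $h_i \in H \cup \{0\}$, and $(H \cup \{0\}) + H \subseteq H$. Hence $H \cap K_2 \ne \varnothing$, the desired contradiction. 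This shows $X \setminus H$ is a convex cone, so $H$ is an asymmetric conical halfspace with $K_1 \subseteq H$ and $K_2 \subseteq X \setminus H$, and translating back through Theorem~\ref{th5.7} produces the required step-linear function. The one place demanding genuine care is the auxiliary claim that a non-asymmetric $C_x$ forces $x \in -H$ (for $x \ne 0$), together with the verification that $t > 0$ in the representation; the remaining degeneracies are harmless, since a vanishing $h_i$ still leaves the left-hand side in $H$, and $u = 0$ would already give $v = u+v \in H$.
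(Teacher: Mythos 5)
The paper states this theorem as a citation from \cite{Gor21} and explicitly presents it \emph{without proof}, so there is no in-paper argument to compare against; your proposal must be judged on its own. It is correct and complete. The reduction of the two existence assertions to one another via Theorem~\ref{th5.7}, the trivial direction, and the Zorn's lemma setup are all sound (a chain's union is a convex cone because any two of its points lie in a common member, asymmetry and disjointness from $K_2$ pass to the union for the reasons you give). The heart of the matter is exactly where you locate it. The auxiliary claim is fine: for a convex cone $C$, failure of asymmetry is equivalent to $0\in C$ (if $z,-z\in C$ then $z+(-z)=0\in C$), and every element of the cone $C_x$ generated by $H\cup\{x\}$ has the form $tx+h$ with $t\ge 0$, $h\in H\cup\{0\}$, not both trivial; hence $0\in C_x$ forces $0=tx+h$ with $t>0$, i.e.\ $x=-t^{-1}h\in -H$, which is excluded for $x\in M$, $x\ne 0$. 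Maximality then yields $k=tx+h\in K_2$ with $t>0$ (the case $h=0$ being harmless, and $k\in H$ being impossible since $H\cap K_2=\varnothing$), and your final identity
\[
(t_2h_1+t_1h_2)+t_1t_2(u+v)=t_2k_1+t_1k_2
\]
correctly places one side in $H$ and the other in $K_2$, giving the contradiction. This is the standard Kakutani--Tukey-style separation argument specialized to cones, which is presumably the route taken in \cite{Gor21} as the paper itself remarks; the only cosmetic quibble is that $0\in M$, so the degenerate case $u=0$ or $v=0$ you dispose of at the end should really be dispatched before invoking the sub-claim, exactly as you note.
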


Note that the first part of Theorem \ref{th6.1} is the counterpart of the Kakutani-Tukey theorem on separation of convex sets by halfspaces (see \cite{Kakutani,Tukey} and \cite[Theorem 1.9.1]{HF}).

\begin{theorem}\label{th6.2}
Each asymmetric convex cone  $K\,\subset X$ can be regularly extended to an asymmetric conical halfspace $H \subset X$.
\end{theorem}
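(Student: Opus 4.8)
The plan is to obtain the desired halfspace directly from the separation theorem, Theorem~\ref{th6.1}, by separating $K$ not from a single point but from its associated subspace $L_K$. The reason this is the right object to separate against is exactly the regularity requirement $L_K \subseteq L_H$: recall that for an asymmetric conical halfspace $H$, Lemma~\ref{lem1.1} gives $L_H = X \setminus ((-H)\cup H)$, so the condition $L_K \subseteq L_H$ is equivalent to $L_K \cap (H \cup (-H)) = \varnothing$. Separating $K$ merely from $\{0\}$ would force only $K \subseteq H$ and would in general allow part of $L_K$ to fall into $H$ (as the non-regular extension in the preceding example illustrates), whereas separating $K$ from all of $L_K$ is precisely what keeps $L_K$ out of $H$.

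Concretely, first I would record the two hypotheses needed to invoke Theorem~\ref{th6.1}: the cone $K$ is asymmetric by assumption, and $L_K$ is a vector subspace, hence a convex cone, with $K \cap L_K = \varnothing$ (this last equality is exactly the characterization of asymmetry recalled in Section~\ref{sec1}). Applying Theorem~\ref{th6.1} with $K_1 := K$ and $K_2 := L_K$ then yields an asymmetric conical halfspace $H \subset X$ such that $K \subseteq H$ and $L_K \subseteq X \setminus H$. The inclusion $K \subseteq H$ already says that $H$ extends $K$; it remains only to upgrade this to a \emph{regular} extension.

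The one genuinely load-bearing step is to pass from $L_K \cap H = \varnothing$ to $L_K \subseteq L_H$, and here the key observation is that $L_K$ is symmetric, i.e.\ $-L_K = L_K$. Indeed, given $v \in L_K$, if $v$ belonged to $-H$ then $-v$ would belong to $H$; but $-v \in L_K \subseteq X \setminus H$, a contradiction, so $L_K \cap (-H) = \varnothing$ as well. Combining $L_K \cap H = \varnothing$ with $L_K \cap (-H) = \varnothing$ gives $L_K \subseteq X \setminus ((-H)\cup H) = L_H$ by Lemma~\ref{lem1.1}. Together with $K \subseteq H$, this is exactly the assertion that $H$ regularly extends $K$.

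I expect the main conceptual point, and the only place where care is genuinely required, to be the choice of $L_K$ (rather than $\{0\}$) as the set from which $K$ is separated; once that choice is made, the existence of $H$ is immediate from Theorem~\ref{th6.1}, and regularity reduces to the short symmetry argument above. No auxiliary quotient construction or transfinite (Zorn-type) argument is needed, since all of that effort is already absorbed into Theorem~\ref{th6.1}.
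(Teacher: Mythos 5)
Your proposal is correct and follows exactly the same route as the paper: apply Theorem~\ref{th6.1} with $K_1=K$ and $K_2=L_K$ (using $K\cap L_K=\varnothing$) to obtain $H$ with $K\subseteq H$ and $L_K\subseteq X\setminus H$. The paper dismisses the regularity check with ``it is easy to see''; your symmetry argument deducing $L_K\subseteq X\setminus((-H)\cup H)=L_H$ is precisely the verification it leaves implicit.
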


\begin{proof}
Let $K \subset X$ be an asymmetric convex cone in $X$ and let $L_K$ be the vector subspace associated with $K$. Since $K\bigcap L_K = \varnothing$, then, by Theorem \ref{th6.1} there exists an asymmetric conical halfspace $H \subset X$ such that $K \subset H$ and $L_K \subset X \setminus H$. It is easy to see that $H$ regularly extends $K$.
\end{proof}

\begin{theorem}\label{th6.3}
For any asymmetric convex cone  $K\,\subset X$ defined on a real vector space $X$ the family ${\mathcal U}_K$ of all step-linear functions $u: X \to {\mathbb{R}}$ such that
\begin{equation}\label{e6.1}
x \in K  \Longrightarrow u(x)>0,
\end{equation}
and
\begin{equation}\label{e6.2}
x \in L_K \Longrightarrow u(x)= 0.
\end{equation}
is nonempty and, moreover,
\begin{equation}\label{e6.3}
x \in K  \Longleftrightarrow u(x)>0\,\,\text{for all}\,\,u \in {\mathcal U}_K
\end{equation}
and
\begin{equation}\label{e6.4}
x \in L_K \Longleftrightarrow u(x)= 0\,\,\text{for all}\,\,u \in {\mathcal U}_K.
\end{equation}
\end{theorem}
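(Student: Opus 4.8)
The plan is to separate three tasks: the nonemptiness of $\mathcal{U}_K$, the reverse implication in \eqref{e6.3}, and the reverse implication in \eqref{e6.4}. I would first observe that the forward implications in \eqref{e6.3} and \eqref{e6.4} are nothing but the defining requirements \eqref{e6.1} and \eqref{e6.2} of the family $\mathcal{U}_K$, so no work is needed there. For nonemptiness I would invoke Theorem \ref{th6.2} to regularly extend $K$ to an asymmetric conical halfspace $H$, so that $K \subseteq H$ and $L_K \subseteq L_H$. Theorem \ref{th5.7} then furnishes a step-linear function $u_H$ with $x \in H \Leftrightarrow u_H(x) > 0$ and $x \in L_H \Leftrightarrow u_H(x) = 0$; the two inclusions make $u_H$ positive on $K$ and zero on $L_K$, so $u_H \in \mathcal{U}_K$.

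For the reverse implication in \eqref{e6.3} I would argue by contraposition. Fix $x \notin K$ and form the convex cone $C := \{\alpha x + h \mid \alpha \ge 0,\ h \in L_K\}$ generated by $\{x\}$ together with the subspace $L_K$; it is routine that $C$ is a convex cone containing $x$. The essential point is to verify $C \cap K = \varnothing$: if $\alpha x + h \in K$ with $\alpha \ge 0$ and $h \in L_K$, then either $\alpha = 0$, forcing $h \in K \cap L_K = \varnothing$ (impossible since $K$ is asymmetric), or $\alpha > 0$, and subtracting $h \in L_K$ from $\alpha x + h \in K$ gives $\alpha x \in K + L_K = K$, hence $x \in K$, a contradiction. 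With $C \cap K = \varnothing$ and $K$ asymmetric, Theorem \ref{th6.1} yields a step-linear $u$ with $u > 0$ on $K$ and $u \le 0$ on $C$. Since $L_K \subseteq C$ is a subspace and $u$ is homogeneous by Proposition \ref{pr5.5}, $u \le 0$ on $L_K$ forces $u = 0$ on $L_K$; thus $u \in \mathcal{U}_K$, while $x \in C$ gives $u(x) \le 0$. Hence not every member of $\mathcal{U}_K$ is positive at $x$, which is the contrapositive we want.

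For the reverse implication in \eqref{e6.4} I would reuse \eqref{e6.3} once it is in hand, rather than separate again. Suppose $u(x) = 0$ for every $u \in \mathcal{U}_K$. By homogeneity $u(tx) = 0$ for all $t \in \mathbb{R}$, so the implication \eqref{e5.22} gives $u(x' + tx) = u(x')$ for every $x' \in X$, every $t$, and every $u \in \mathcal{U}_K$. Taking $x' \in K$, each $u(x') > 0$ by \eqref{e6.1}, whence $u(x' + tx) > 0$ for all $u \in \mathcal{U}_K$, and \eqref{e6.3} forces $x' + tx \in K$. As this holds for all $x' \in K$ and all $t \in \mathbb{R}$, the vector $x$ satisfies the defining property of $L_K$, so $x \in L_K$.

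I expect the main obstacle to be the disjointness $C \cap K = \varnothing$, since this is the only step where the structure of $L_K$ (the identities $K + L_K = K$ and $K \cap L_K = \varnothing$) is genuinely used and where the separating cone must be chosen correctly to also guarantee $u = 0$ on $L_K$. The remaining points are bookkeeping, with the one economical observation being that \eqref{e6.4} follows from \eqref{e6.3} combined with the additivity property \eqref{e5.22} of step-linear functions, so that only a single separation argument (for \eqref{e6.3}) is required.
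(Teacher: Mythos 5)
Your proof is correct, and while it rests on the same pillars as the paper's (Theorem \ref{th6.2} plus Theorem \ref{th5.7} for nonemptiness, and the separation Theorem \ref{th6.1} for the reverse implications), the two reverse implications are handled by a genuinely different route. For \eqref{e6.3} the paper splits $y \notin K$ into three cases ($-y \in K$, $y \in L_K$, and the remaining case) and in the nontrivial case separates the cone ${\rm conv}(K \cup l_y)$, where $l_y$ is the ray through $-y$, from $L_K$ --- which requires checking that this enlarged cone is asymmetric and disjoint from $L_K$. You instead separate $K$ itself from $C = \{\alpha x + h \mid \alpha \ge 0,\ h \in L_K\}$; this puts the asymmetry hypothesis of Theorem \ref{th6.1} on $K$, where it is free, handles all $x \notin K$ uniformly in one stroke, and the disjointness $C \cap K = \varnothing$ follows cleanly from $K + L_K = K$ and $K \cap L_K = \varnothing$ exactly as you argue. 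The price is that you only get $u(x) \le 0$ rather than the strict $u(y) < 0$ the paper obtains, but the nonstrict inequality is all that the contrapositive of \eqref{e6.3} needs. For \eqref{e6.4} the paper reruns a case analysis, reusing the separating function from the \eqref{e6.3} argument; your derivation of \eqref{e6.4} directly from \eqref{e6.3} together with homogeneity and the translation property \eqref{e5.22} is shorter and arguably more conceptual, since it exhibits $L_K$ as precisely the common kernel of $\mathcal{U}_K$ via the defining property of $L_K$. Both arguments are sound; yours buys economy (a single separation argument) at no loss of rigor.
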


\begin{proof}
By Theorem \ref{th6.2} any asymmetric convex cone  $K\,\subset X$ can be regularly extended to an asymmetric conical halfspace$ H \subset X$. Due to Theorem \ref{th5.7} for $H$ there exists a step-linear function $u_H:X \to {\mathbb R}$ satisfying \eqref{e5.27} and \eqref{e5.28}. Since $K \subset H$ and $L_K \subset L_H$, the step-linear function $u_H$ also satisfies \eqref{e6.1} and \eqref{e6.2} and, hence, the family ${\mathcal U}_K$ is nonempty.

Let us now prove \eqref{e6.3} and \eqref{e6.4}. The right implications ($\Longrightarrow$) in \eqref{e6.3} and \eqref{e6.4} follows from the definition of the family ${\mathcal U}_K$. To prove the left implication ($\Longleftarrow$) in \eqref{e6.3} it is sufficient to show that for any $y \not\in K$ there exists a step-linear function $\bar{u} \in {\mathcal U}_K$ for which the equality $\bar{u}(y) \leq 0$ holds. If $y \notin K$ then either $-y \in K$ or $y \in X \setminus ((-K)\bigcup K)$. In the case when $-y \in K$ we have, through \eqref{e6.1} and the homogeneity of step-linear functions, that $u(y) = -u(- y)<0$ for all $u \in {\mathcal U}_K$. Consider the case when $y \in X \setminus ((-K)\bigcup K)$. Then, either $y \in L_K$ or $y \in X \setminus ((-K)\bigcup L_K \bigcup K)$. If $y \in L_K$ through \eqref{e6.2} we have $u(y) = 0$ for all $u \in {\mathcal U}_K$. Thus, when $y \in -K$ or $y \in L_K$ we can take as $\bar{u}$ any step-linear function $u \in {\mathcal U}_K$. It remains to consider the case $y \in X \setminus ((-K)\bigcup L_K \bigcup K)$. Note, that $y \ne 0$ since $y \not\in L_K$. Let $l_y:=\{\alpha y \mid \alpha < 0\}$ be the ray emanating from the origin and going through $-y$. Prove that $({\rm conv}(K\bigcup l_y)) \bigcap L_K = \varnothing$, where ${\rm conv}M$ is the convex hull of the set $M$. Assume to the contrary that there exist $z \in K, \alpha < 0$ and $\lambda \in (0,1)$ such that $\lambda z + (1 - \lambda)\alpha y \in L_K$. Since $L_K$ is a vector subspace, $-z - \lambda^{-1}(1 - \lambda)\alpha y \in L_K$. By the definition of $L_K$ we obtain $z + (-z - \lambda^{-1}(1 - \lambda)\alpha y) = -\lambda^{-1}(1 - \lambda)\alpha y  \in K$, which contradicts the condition $y \not\in K$. The obtained contradiction proves that ${\rm conv}(K\bigcup l_y)) \bigcap L_K = \varnothing$. It is easy to verify that ${\rm conv}(K\bigcup l_y)$ is an asymmetric convex cone. Consequently, by Theorem \ref{th6.1} there exists a step-linear function $\bar{u}:X \to {\mathbb R}$ such that $\bar{u}(x) > 0$ for all $x \in {\rm conv}(K\bigcup l_y)$ and $\bar{u}(x) \leq 0$ for all $x \in L_K$. Since the function $\bar{u}$ is homogeneous and $-y \in {\rm conv}(K\bigcup l_{y})$ we have that $\bar{u}(y) < 0$ and, since $L_K$ is a vector subspace, we have $\bar{u}(x)=0$ for all $x \in L_K$. This completes the proof of \eqref{e6.3}.

To prove the left implication ($\Longleftarrow$) in \eqref{e6.4} we show that for any $y \not\in L_K$ there exists $u \in {\mathcal U}_K$ for which $u(y) \ne 0$. If $y \not\in L_K$ then the following three disjoint alternatives can be held: $y \in K,\,-y \in K$ and $y \in X \setminus ((-K)\bigcup L_K \bigcup K)$. For $y \in K$ and $-y \in K$ through \eqref{e6.1} we have $u(y) \ne 0$ for all $u \in {\mathcal U}_K$. While for the alternative $y \in X \setminus ((-K)\bigcup L_K \bigcup K)$, as it was shown above, there exists a step-linear function $u \in {\mathcal U}_K$ such that $u(y) < 0$. Thus, if $u(y)= 0\,\,\text{for all}\,\,u \in {\mathcal U}_K$ then $y \in L_K$.

This completes the proof of the theorem.
\end{proof}

\section{Conclusions}

In the literature the geometric structure of convex cones is most often associated with their facial structure.
In the paper we have developed another approach to studying the geometric structure of convex cones. In this approach so called open components rather than faces are considered as basic elements of geometric structure.  We refer to the geometric structure of convex cones based on open components as the internal geometric structure. We have focused on convex cones in infinite-dimensional real vector spaces which are endowed with no topology. It have been demonstrated that in the infinite-dimensional setting the internal geometric structure of a convex cone is related to its facial structure but differs from the latter.

The characterization of geometric structure of convex cones via their open components have allowed us to obtain, first, the exact analytical representation of conical halfspaces by step-linear functions (Theorem \ref{th5.7}) and, then, the exact analytical representation of any asymmetric convex cone by the family of step-linear functions (Theorem \ref{th6.3}). The correspondence between an asymmetric conical halfspace and a step-linear function representing it is actually the extension of the classical duality between algebraic open halfspaces and linear functions. The family of step-linear functions, which analytically describe an asymmetric convex cone, can  be considered as a counterpart of the positive dual cone.

Due to these results more thorough study of step-linear functions is of interest. In particular, it has been shown in Proposition~\ref{pr5.5} that every step-linear function is homogeneous and satisfies the implications \eqref{e5.22} and \eqref{e5.23}. The next question arises: is a homogeneous function satisfying the implications \eqref{e5.22} and \eqref{e5.23} step-linear? The positive answer to this question would allow us to give an axiomatic definition of a step-linear function. If the answer is negative, we will need to find out additional properties which are inherent to step-linear functions.

\smallskip
\smallskip

{\bf Acknowledgements}
The research was supported by the State Program for Fundamental Research of Republic of Belarus 'Convergence-2025', project 1.3.04.


\begin{thebibliography}{99}

\bibitem{Aliprantis} Aliprantis, C.D., Border, K.C.: Infinite-dimensional analysis. A hitchhiker's guide. 3ed. Springer-Verlag, Berlin (2006) https://doi.org/10.1007/3-540-29587-9 

\bibitem{Alip_Tour}
Aliprantis, C.D., Tourky R.: Cones and duality. Amer. Math. Soc., Providence (RI) (2007) 

\bibitem{Barker}
Barker, G.P.: The lattice of faces of a finite dimensional cone. Linear Algebra and Its Applications, 7(1), 71--82 (1973)

\bibitem{Birkhoff}
Birkhoff, G.: Lattice theory (3rd ed.).  Amer. Math. Soc., Providence (RI) (1967)

\bibitem{BonSha}
   Bonnans, J.F., Shapiro, A.: Perturbation Analysis of Optimization Problems. New York, Springer-Verlag (2000). 

\bibitem{Millan}
Diaz Millan, R, Roshchina, V.: The Intrinsic Core and Minimal Faces of Convex Sets in General Vector Spaces. Set-Valued and Variational Analysis. 31(Article 14), (2023). https://doi.org/10.1007/s11228-023-00671-6 

\bibitem{Garcia}
Garcia-Pacheco, F.J.: A solution to the faceless problem. J. Geom. Anal., 30(4), 3859--3871 (2020).

\bibitem{Gor-m}
Gorokhovik, V.V.: Convex and nonsmooth vector optimization problems [in Russian], Nawuka i Tekhnika, Minsk (1990).


\bibitem{GS98}
Gorokhovik, V.V., Semenkova, E.A.: Classification of halfspaces by types in infinite-dimensional vector spaces [in Rusian]. Mat. Zametki. 64(2), 191--198 (1998).  https://doi.org/10.4213/mzm1385 

Engish translation:

Gorokhovik, V.V., Semenkova, E.A.: Classification of halfspaces by types in infinite-dimensional vector spaces. Mathematical Notes. 64(2),  164--169 (1998). https://doi.org/10.1007/BF02310300

\bibitem{GS2000}
Gorokhovik, V.V., Shinkevich, E.A.: Geometric structure and classification of infinite-dimensional halfspaces. In: Przeworska--Rolewich, D. (ed): Algebraic Analysis and Related Topics. Banach Center Publications, vol. 53, pp. 121--138. Institute of Mathematics PAN, Warsaw (2000). 

\bibitem{Gor21}
Gorokhovik, V.V.: Step-Affine Functions, Halfspaces, and Separation of Convex Sets with Applications to Convex Optimization Problems. Proceedings of
the Steklov Institute of Mathematics. 313(Suppl.~1), S83--S99 (2021) https://doi.org/10.1134/S008154382103010X 

\bibitem{Hammer}
Hammer, P.C.:  Maximal convex sets. Duke Math. J. 22(1), 103--106 (1955) https://doi.org/10.1215/S0012-7094-55-02209-2 

\bibitem{HF}
Hille, E., Phillips, R.: Functional Analysis and Semi-Groups. Amer. Math. Soc., Providence (RI)(1957) 

\bibitem{Holmes} Holmes, R.: Geometric functional analysis and its applications. Springer--Verlag, New York--Heidelberg--Berlin {1975) 

\bibitem{Jameson}  Jameson, G.:  Ordered linear spaces. Springer, Berlin (1970). 

\bibitem{Jahn}
Jahn, J.: Vector Optimization --- Theory, Applications, and Extensions. Springer, Berlin (2011). 

\bibitem{Kakutani}
Kakutani, S.: Ein Beweis des Satzen von M.~Eidelheit \"{u}ber konvexe Mengen. Proc. Imp. Acad. Tokio. 14, 93--94 (1938) https://doi.org/10.3792/pia/1195579980 

\bibitem{KhTZ}
Khan, A.A., Tammer, Ch., Z\u{a}linescu, C.: Set Valued Analysis. An Introduction with Applications.  Springer, Heidelberg (2014). https://doi.org/10.1007/978-3-642-54265-7 

\bibitem{Khazayel}
Khazayel, B., Farajzadeh, A.P., G\"{u}nther, C., Tammer, Ch.:  On the intrinsic core of convex cones in real vector spaces. SIAM J. on Optimization. 31(2), 1276--1298 (2021) https://doi.org/10.1137/19M1283148 

\bibitem{Klee} Klee, V.: The structure of semispaces. Math. Scand. 4, 54--64 (1956). https://doi.org/10.7146/math.scand.a-10455 	

\bibitem{Klee51} Klee, V.: Convex sets in linear spaces. Duke Mathematical Journal.  18(2), 443--466 (1951) https://doi.org/10.1215/S0012-7094-51-01835-2 

\bibitem{Las}
Lassak, M.: Convex half-spaces. Fund. Math. 120(1), 7--13 (1984). https://doi.org/10.4064/fm-120-1-7-13 

\bibitem{LasPro}
Lassak, M.A., Proszynski, A.: Translate-inclusive sets, orderings and convex halfspaces. Bull. Polish Acad. Sci. Math. 34(3--4), 195--201 (1986). 

\bibitem{ML_S88}
Martinez-Legaz, J.-E., Singer, I.: The structure of hemispaces in $\mathbb{R}^n$. Linear Algebra and Its Applications. 110, 117--179 (1988).  https://doi.org/10.1016/0024-3795(83)90315-0

\bibitem{Mord}
Mordukhovich, B.S.: Variational Analysis and Generalized Differentiation. Vol. 1. Basic Theory. Springer--Verlag, Berlin (2005).

\bibitem{NZ}
Novo, V., Z\u{a}linescu, C.: On Relatively Solid Convex Cones in Real Linear Spaces. J. Optimiz. Theory  Appl. 188(1), 277--290 (2021) https://doi.org/10.1007/s10957-020-01773-z 

\bibitem{Pere}
Peressini, A.L.: Ordered topological vector spaces. Harper and Row,  New York (1967). 

\bibitem{Raikov} Raikov, D.A.: Vector Spaces. P. Noordhoff, Groningen, Netherlands  (1965).

\bibitem{Roc}
Rockafellar, R.T.: Convex Analysis, Princeton University Press, Princeton, New Jersey (NJ)

\bibitem{RW}
Rockafellar, R.T., Wets, R.J.-B.: Variational Analysis, Springer Verlag, Berlin (1998).

\bibitem{Rosenstein}
Rosenstein, J.G.: Linear orderings, Academic Press, New York (1982).

\bibitem{Rubinstein1}
Rubinstein, G.S.: On a method of invastigation of convex sets [in Russian]. Doklady Akad. Nauk SSSR. 102, 451--454 (1955).

\bibitem{Rubinstein} Rubinshtein, G.S.:  Duality in mathematical programming and some problems of convex analysis. Russian Mathematical Surveys. 25(5), 171--200 (1970).
https://doi.org/10.1070/RM1970v025n05ABEH003800 

\bibitem{Tukey}
Tukey, J. W.: Some notes on the separation of convex sets. Portugaliae Math. 3(2), 95--102 (1942) https://eudml.org/doc/114600 

\bibitem{Zalinescu}
 Z\u{a}linescu, C.: Convex Analysis in General Vector Spaces. World Scientific, New Jersey-London-Singapore-Hong Kong (2002)
(2002)

}
\end{thebibliography}
\end{document}